\providecommand{\keywords}[1]{\noindent\textbf{\textit{Keywords:}} #1}
\newtheorem{theorem}{Theorem}[section]
\newtheorem{definition}[theorem]{Definition}
\newtheorem{lemma}[theorem]{Lemma}
\newtheorem{corollary}[theorem]{Corollary}
\newtheorem{proposition}[theorem]{Proposition}
\newtheorem{observation}[theorem]{Observation}
\newcommand\PsAlt\blacklozenge
\newcommand\NcAlt\blacksquare
\newcommand\yespr[1]{}
\newcommand\slang{\ensuremath{{\mathcal L}_2}}
\newcommand\flang{\ensuremath{{\mathcal L}_1}}
\newcommand\mlang{{\mathcal L}_C}
\newcommand\modelpar[1]{{\mathfrak M}_{#1}}
\newcommand\sat{{Sat}}
\newcommand\val{{\rm val}}
\newcommand\term{{\rm Trm}_C}
\newcommand\snt{{\rm Snt}_C}
\newcommand\nmodel{\ensuremath{\mathfrak N}}
\newcommand\model{\ensuremath{\mathfrak M}}
\newcommand\Oracleproof{{\tt Proof}^{\Lambda}_{T|X}}
\newcommand\oraclerule[6]{{\ensuremath{{\tt Rule}_{T|X}^{\Lambda}(#1,#2 ,#3,#4,#5, #6)}}\xspace}
\newcommand\provor[3]{[ #1\mathord| \mathord #2]^{\Lambda}_{#3}}
\newcommand\consor[3]{\la #1\mathord| #2\ra^{\Lambda}_{#3}}
\def\eca{\ensuremath{{{\rm ECA}_0}}\xspace}
\def\aca{{{\rm ACA}_0}}
\def\rcaa{{{\rm RCA}^\ast_0}}
\def\atr{{{\rm ATR}_0}}
\def\transin{{\tt TI}}
\def\compax{{\tt CA}}
\def\provfor{{\tt Proof}}
\def\Provfor{{\tt IPC}^{\Lambda}}
\newcommand{\prov}[1]{[{#1}]^{\Lambda}_T}
\newcommand{\provx}[2]{[{#1}]_{#2}}
\newcommand{\consx}[2]{\langle{#1}\rangle_{#2}}
\newcommand{\ORFN}{{\tt OracleRFN}}
\newcommand{\OCons}{{\tt OracleCons}}
\def\nc{{\Box}}
\def\<{\left\langle}
\def\>{\right >}
\newcommand{\glp}{{\ensuremath{\mathsf{GLP}}}\xspace}
\newcommand{\pra}{\ensuremath{{\mathrm{PRA}}}\xspace}
\newcommand{\rca}{\ensuremath{{\mathrm{RCA}_0}}\xspace}
\newcommand{\pa}{\ensuremath{{\mathrm{PA}}}\xspace}
\newcommand{\Robinson}{\ensuremath{{\mathrm{Q}}}\xspace}
\newcommand{\AcaNaught}{\ensuremath{{{\rm ACA}_0}}\xspace}
\newcommand{\isig}[1]{{\ensuremath {\mathrm{ I}\Sigma_{#1}}}\xspace}
\newcommand{\ea}{\ensuremath{{\rm{EA}}}\xspace}
\newcommand{\la}{\langle}
\newcommand{\ra}{\rangle}
\newcommand\david[1]{}
\begin{document}

\title{Predicativity through transfinite reflection
      }

{
\author{Andr\'es Cord\'on--Franco}
\affil{Universidad de Sevilla\\
{\tt acordon@us.es}}
\author{David Fern\'andez--Duque}
\affil{Instituto Tecnol\'ogico Aut\'onomo de M\'exico\\
{\tt david.fernandez@itam.mx}
}
\author{Joost J. Joosten}
\affil{Universitat de Barcelona\\
{\tt jjoosten@ub.edu}}
\author{F\'elix Lara--Mart\'in}
\affil{Universidad de Sevilla\\
{\tt fflara@us.es}}

\maketitle
}

\begin{abstract}
Let $T$ be a second-order arithmetical theory, $\Lambda$ a well-order, $\lambda<\Lambda$ and $X\subseteq \mathbb N$. We use $\provor \lambda XT\varphi$ as a formalization of ``$\varphi$ is provable from $T$ and an oracle for the set $X$, using $\omega$-rules of nesting depth at most $\lambda$''.

For a set of formulas $\Gamma$, define {\em predicative oracle reflection for $T$ over $\Gamma$} (${\tt Pred\mbox{-}O\mbox{-}RFN}_T[\Gamma]$) to be the schema that asserts that, if $X\subseteq\mathbb N$, $\Lambda$ is a well-order and $\varphi\in\Gamma$, then
\[
\forall\, \lambda{<}\Lambda\ (\provor \lambda XT\varphi\rightarrow \varphi).
\]
In particular, define {\em predicative oracle consistency} (${\tt Pred\mbox{-}O\mbox{-}Cons}(T)$) as ${\tt Pred\mbox{-}O\mbox{-}RFN}_T[\{{\tt 0=1}\}]$.

Our main result is as follows. Let $\atr$ be the second-order theory of Arithmetic Transfinite Recursion, $\rcaa$ be Weakened Recursive Comprehension and $\rm ACA$ be Arithmetic Comprehension with Full Induction. Then,
\[
\atr \equiv \rcaa +{\tt Pred\mbox{-}O\mbox{-}Cons}(\rcaa) \equiv \rcaa + {\tt Pred\mbox{-}O\mbox{-}RFN}_{{\rm ACA}} [{\bm \Pi}^1_2].
\]
We may even replace $\rcaa$ by the weaker $\eca$, the second-order analogue of Elementary Arithmetic.

Thus we characterize $\atr$, a theory often considered to embody Predicative Analysis, in terms of strong reflection and consistency principles.
\end{abstract}

{\keywords{Reflection principles; Fragments of second-order arithmetic; Predicative analysis; Reverse mathematics}}

\section{Introduction}
Reflection over a theory $T$ roughly says that whatever is provable in $T$ is actually true. As such, reflection is natural from a methodological or philosophical point of view. Moreover, from a technical point of view it also turns out to be natural, since various well-known fragments of arithmetic can be re-cast in terms of reflection principles. In this introduction we will discuss results along these lines for first-order arithmetical theories and address the question of how this can be extended beyond first-order theories.

\subsection{Reflection, consistency and fragments of first-order arithmetic}
Fix a formal theory $T$. If we denote the formal provability of a formula $\varphi$ in $T$ by $\Box_T\varphi$, we can write ${\tt Rfn}(T)$, called \emph{local reflection over $T$,} as the scheme $\Box_T \varphi \to \varphi$, where $\varphi$ has no free variables. 

It turns out that a better-behaved notion of reflection is so-called \emph{uniform reflection}\david{I changed from `global reflection', I think this is more standard, isn't it?} where we allow for formulas, possibly with parameters. This scheme, denoted ${\tt RFN}(T)$, is given by 
\begin{equation}\label{EqRFN}
\forall x\ \big( \Box_T \varphi (\dot x) \to \varphi (x) \big),
\end{equation}
where $\varphi$ is any formula and $\dot x$ means that we must replace $x$ by a name for $x$.

Let us denote the consistency of $T+\varphi$ by $\Diamond_T \varphi$, which is equivalent to $\neg \Box_T \neg \varphi$. By G\"odel's Second Incompleteness Theorem we know that consistent computably enumerable (c.e.) theories $T$ do not even prove the weakest instances of reflection; if we define $\bot$ by $\tt 0=1$, we know that {\[T \nvdash \Box_T {\bot \to \bot},\]}
since the latter is provably equivalent to the consistency of $T$.

Thus, adding reflection to a consistent base theory will yield a proper extension of it. This is philosophically appealing, since one can sustain that it is natural to accept reflection over $T$ once one has accepted $T$. An early result by Kreisel and Levy \cite{KreiselLevy:1968:ReflectionPrinciplesAndTheirUse} shows that reflection principles are also natural from a technical point of view in that adding them yields natural extensions. Below, \pa denotes the well-known first-order theory \emph{Peano Arithmetic} and \pra refers to \emph{Primitive Recursive Arithmetic,} which is often considered to embody Hilbert's concept of finitist mathematics \cite{Tait:1981:Finitism}.

\begin{theorem}[Kreisel, Levy (1968)]
$\pra + {\tt RFN}(\pra) \equiv \pa$.
\end{theorem}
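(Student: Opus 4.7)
The plan is to prove the two inclusions separately; the substantive half is showing that uniform reflection suffices for full induction.

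For the direction $\pa \vdash \pra + {\tt RFN}(\pra)$, I would invoke the existence, inside $\pa$, of a partial truth predicate $T_n$ for each complexity class $\Sigma_n$. Since $\pa$ has induction for all arithmetical formulas, one proves in $\pa$ by induction on the length of $\pra$-derivations the $\Sigma_n$-soundness of $\pra$, that is, $\forall x\,(\Box_\pra\varphi(\dot x)\to \varphi(x))$ whenever $\varphi(x)\in\Sigma_n$. Because every arithmetical formula lies in some $\Sigma_n$, this yields every instance of ${\tt RFN}(\pra)$ in $\pa$, and of course $\pa \vdash \pra$ directly.

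For the converse $\pra + {\tt RFN}(\pra) \vdash \pa$, the idea is to derive each induction axiom from one carefully chosen instance of uniform reflection. Given an arithmetical formula $\varphi(x)$, abbreviate
\[
\chi(x)\;\equiv\;\varphi(0)\wedge\forall y\,(\varphi(y)\to\varphi(y+1))\to\varphi(x).
\]
The crucial observation is that $\chi(\bar n)$ has a $\pra$-proof for each standard $n$, obtained by $n$ applications of modus ponens from the two antecedents, and that the map $n\mapsto\ulcorner\text{proof of }\chi(\bar n)\urcorner$ is primitive recursive. Formalizing this construction in $\pra$ produces $\pra \vdash \forall n\, \Box_\pra \chi(\dot n)$. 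The instance of ${\tt RFN}(\pra)$ for the formula $\chi$ then yields $\forall n\,\chi(n)$, which is exactly the induction axiom for $\varphi$.

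The main technical obstacle is this internal formalization step: I must verify inside $\pra$ that the primitive-recursive proof-building function is total and that its outputs really are $\pra$-derivations of $\chi(\dot n)$. This is essentially an application of provable $\Sigma_1$-completeness to a concrete primitive-recursive construction; it is standard but bookkeeping-heavy, requiring care with G\"odel numbering, substitution and iteration over proof-codes. Beyond this piece, the argument reduces to a single external use of reflection, as sketched above.
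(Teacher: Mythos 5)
The paper offers no proof of this theorem---it is imported verbatim from Kreisel and Levy (1968)---so your proposal can only be compared against the classical argument, which it essentially reproduces. Your second direction is correct and is exactly the standard derivation of induction from uniform reflection: for arithmetical $\varphi$, the formula $\chi(\bar n)$ has a derivation built by a primitive recursive function of $n$ (indeed essentially in pure logic, modulo numeral arithmetic), $\pra$ verifies the correctness of this proof-builder by quantifier-free induction on $n$, and a single instance of ${\tt RFN}(\pra)$ applied to $\chi$ returns the induction axiom for $\varphi$. One quibble: the internal verification step is not really ``provable $\Sigma_1$-completeness,'' which only yields $\sigma\to\Box_\pra\sigma$ for $\Sigma_1$ sentences and hence each numerical instance $\Box_\pra\chi(\bar n)$ separately; what you need, and what in fact works, is quantifier-free ($\Sigma_1$) induction \emph{inside} $\pra$ establishing $\forall n\,{\tt Proof}_\pra(d(n),\chi(\dot n))$ for the proof-building function $d$. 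Also recall that induction axioms carry parameters $z$, so the reflection instance must be taken with all free variables dotted, which the schema \eqref{EqRFN} permits.

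Your first direction, however, contains a step that fails as written: one cannot prove the $\Sigma_n$-soundness of $\pra$ ``by induction on the length of $\pra$-derivations'' using the partial truth predicate for $\Sigma_n$, because in a Hilbert-style derivation of a $\Sigma_n$ formula the intermediate formulas (the cut formulas consumed by modus ponens) have unbounded quantifier complexity, so the inductive invariant ``every line so far is true'' cannot even be expressed with any fixed partial truth predicate. The standard repair is to first bound the complexity of the formulas occurring in the derivation: either formalize partial cut elimination in $\pa$ (available since $\pa$ proves the totality of superexponentiation) so that all formulas in the transformed derivation lie in a class determined by $n$ and the axioms, and then run the truth-predicate induction; or pass through a finitely axiomatized extension, e.g.\ observe that $\isig{1}$ proves (a translation of) $\pra$, reduce $\Box_\pra\varphi$ to logical provability of $A\to\varphi$ for a single axiom sentence $A$, and apply the same bounded-complexity argument there. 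Note also that ${\tt RFN}(\pra)$ is obtained in $\pa$ schema-wise---for each standard $n$ one proves ${\tt RFN}_{\Sigma_n}(\pra)$ with a truth predicate of matching level---which suffices since reflection is a schema, not a single sentence. With this repair your argument becomes the classical proof of the theorem.
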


This relation between reflection and a system of arithmetic can be extended to fragments of Peano arithmetic. By $\Sigma_n^0$ --or simply $\Sigma_n$ in this introduction-- we denote first-order arithmetical formulas of the form $\exists x_n\forall x_{n-1}\hdots \varphi$, where $\varphi$ only has bounded quantifiers (see Section \ref{SubSecSynt}). The theory $\isig{n}$ is the theory of arithmetic where apart from the basic axioms for $+$ and $\times$ we have induction axioms \[\big( \forall \, y{<}x \ \varphi (y,z) \to \varphi(x,z)\big) \ \to \ \forall x\ \varphi(x,z)\]
for all $\Sigma_n$-formulas. 

Leivant \cite{Leivant:1983:OptimalityOfInduction} proved that there is a correspondence between the principles $\isig{n}$ for $n\geq 2$ and restricted reflection principles over \pra. Restricted reflection principles arise when we restrict the formulas in the reflection schema. If $\Gamma$ is a set of formulas, ${\tt RFN}_\Gamma(T)$, denotes the restriction of \eqref{EqRFN} where $\varphi (x) \in \Gamma.$ Beklemishev \cite{Beklemishev:1997:InductionRules} extended Leviant's result to the case $n = 1$ by lowering the base theory from \pra to a somewhat weaker theory  \ea called \emph{Kalmar Elementary Arithmetic}.

\begin{theorem}\label{theorem:LeivantsReflectionVersusInduction}
Given $n\geq 1$, $\ea + {\tt RFN}_{\Sigma_{n+1}}(\ea) \ \equiv \ \isig{n}$.\david{Yo agregu\'e el cuantificador sobre la $n$, por favor verificar que sea correcto.}
\end{theorem}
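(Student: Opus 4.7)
The plan is to establish the two inclusions separately. For the direction $\ea + {\tt RFN}_{\Sigma_{n+1}}(\ea) \vdash \isig{n}$, I would fix a $\Sigma_n$ formula $\varphi(x, z)$ and abbreviate the induction hypothesis by $H(z) := \forall x\, (\forall y{<}x\, \varphi(y, z) \to \varphi(x, z))$. A routine complexity check shows that $H(z)$ is $\Pi_{n+1}$ (bounded quantifiers do not increase the $\Sigma_n$ level), hence $\psi(x, z) := H(z) \to \varphi(x, z)$ lies in $\Sigma_{n+1}$. The key observation is that $\ea$ can verify, uniformly in $k$, that $H(\dot z) \to \varphi(\dot k, \dot z)$ admits an $\ea$-proof consisting of $k$ nested applications of modus ponens; this is formalized as $\ea \vdash \forall k\ \Box_\ea [H(\dot z) \to \varphi(\dot k, \dot z)]$ and established by $\Sigma_1$-induction on the existence of the proof object. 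Applying the instance of uniform $\Sigma_{n+1}$-reflection to $\psi(x, z)$ then yields $\forall x, z\, (H(z) \to \varphi(x, z))$, which is the desired $\Sigma_n$-induction axiom.

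For the converse $\isig{n} \vdash {\tt RFN}_{\Sigma_{n+1}}(\ea)$, I would introduce a canonical partial truth predicate $\sat_{n+1}$ for $\Sigma_{n+1}$ formulas, itself of complexity $\Sigma_{n+1}$, which satisfies (provably in $\ea$) the Tarski biconditionals $\sat_{n+1}(\lceil\psi(\dot x)\rceil) \leftrightarrow \psi(x)$ for each standard $\Sigma_{n+1}$ formula $\psi$. The target is a soundness claim: every $\ea$-provable $\Sigma_{n+1}$ sentence satisfies $\sat_{n+1}$. Combined with the Tarski biconditionals, this delivers ${\tt RFN}_{\Sigma_{n+1}}(\ea)$.

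The main obstacle is that a naive proof-length induction on the soundness statement has complexity $\Pi_{n+1}$, which outstrips the induction available in $\isig{n}$. To circumvent this, I would appeal to partial cut-elimination: any $\ea$-derivation of a $\Sigma_{n+1}$ sentence can be transformed into a derivation all of whose cut formulas have complexity at most $\Sigma_n \cup \Pi_n$. On such normalized proofs, the inductive soundness predicate reduces to a Boolean combination of $\Sigma_n$ formulas, and the requisite proof-length induction becomes available within $\isig{n}$. Piecing the two halves together concludes the equivalence, with the Beklemishev improvement for $n=1$ being handled by a more careful cut-elimination analysis to accommodate the weaker base theory.
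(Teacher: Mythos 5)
First, a point of reference: the paper does not prove this theorem at all --- it is quoted as background, with the proofs residing in Leivant (1983) and Beklemishev (1997) --- so your proposal must be measured against the standard arguments in those references, and in broad architecture it does match them (formalized iterated derivations plus one reflection instance for one direction; a partial truth predicate plus partial cut elimination for the other). However, two steps in your first direction fail as written. (1) Your complexity count for $H(z)$ is wrong in exactly the place where it matters: for $\varphi\in\Sigma_n$, the formula $\forall y{<}x\,\varphi(y,z)$ is \emph{not} literally $\Sigma_n$, and collapsing the bounded quantifier into $\Sigma_n$ requires $\Sigma_n$-collection, which is unavailable in \ea; so $H$ only prenexes to $\Pi_{n+2}$, your $\psi$ is $\Sigma_{n+2}$, and the needed instance of ${\tt RFN}_{\Sigma_{n+1}}(\ea)$ simply does not apply to it. The repair is to use successor induction: with ${\rm Prog}(\varphi):=\varphi(0,z)\wedge\forall x\,(\varphi(x,z)\to\varphi(x{+}1,z))$, which genuinely is $\Pi_{n+1}$, the formula ${\rm Prog}(\varphi)\to\varphi(x,z)$ is $\Sigma_{n+1}$, and successor $\Sigma_n$-induction recovers the strong form over \ea by the standard equivalences between $I\Sigma_n$, $I\Pi_n$ and the least-number principles. (2) Your appeal to ``$\Sigma_1$-induction'' inside \ea to get $\ea\vdash\forall k\,\Box_\ea({\rm Prog}(\varphi)(\dot z)\to\varphi(\dot k,\dot z))$ is illegitimate: \ea does not contain $I\Sigma_1$, and for $n=1$ this would be circular, since $\isig{1}$ is precisely what is being derived. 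The correct justification is that the relevant proof object is produced by a Kalmar-elementary function of $k$ (its code is elementarily bounded in $k$), \ea proves the totality of elementary functions, and the verification that the output is a proof is $\Delta^0_0(\exp)$. Avoiding $I\Sigma_1$ at exactly such points is the substance of Beklemishev's refinement to base \ea.

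Your second direction follows the standard Beklemishev route and correctly identifies both the obstacle (the naive soundness induction is too complex for $\isig{n}$) and the tool (partial cut elimination with a $\Sigma_{n+1}$ partial truth predicate), but two caveats should be made explicit. Cut elimination has superexponential cost and is therefore \emph{not} available in \ea itself; it becomes available here only because $\isig{1}\subseteq\isig{n}$ proves the totality of superexponentiation --- this is why the argument works uniformly for all $n\geq 1$. Moreover, after free-cut elimination the remaining cut formulas include instances of the $\Delta^0_0(\exp)$-induction axioms of \ea, not only formulas in $\Sigma_n\cup\Pi_n$; this is harmless (those axioms have fixed low complexity and are provably true) but must be accounted for in the soundness induction. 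Finally, your closing sentence mischaracterizes the $n=1$ issue: Beklemishev's contribution was not ``a more careful cut-elimination analysis'' of the same statement but lowering the base theory from \pra (over which Leivant proved the result for $n\geq 2$) to \ea; over \pra the $n=1$ equation is in fact false, since $\isig{1}$ is $\Pi_2$-conservative over \pra and hence does not prove ${\tt Con}(\pra)$, whereas it does prove ${\tt Con}(\ea)$.
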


This theorem shows that reflection principles can be related to fragments of arithmetic. Moreover, for various notions of provability one can often link reflection to consistency statements. This relation is normally proved in the way presented in Lemma \ref{theorem:generalReflectionVersusConsistency} below, where we need to require some minimal properties of the particular provability predicate, leading to the notion of \emph{normal provability predicate}.

\begin{definition}
We will call a predicate\david{changed from `provability notion' since this is not a thing.} $\NcAlt$\david{This did not look right in the JSL format, but I made a macro so you can change it for something else.} a \emph{normal provability predicate} if it is provable in $\ea$ that $\NcAlt$ satisfies the modal logic $\sf K$; that is,\david{The definitions here were not coherent with their use later, so I changed them a bit, especially regarding the dependence on $U$.}
\[
\ea \vdash \NcAlt (\varphi \to \psi) \to (\NcAlt \varphi \to \NcAlt \psi),
\]
$\ea \vdash \NcAlt (\varphi)$ for any tautology $\varphi$, and $\ea \vdash \varphi$ implies that $\ea\vdash \NcAlt \varphi$.
\end{definition}

For $\Gamma$ a set of formulas, let $\neg \Gamma$ denote the set $\{ \neg \gamma \mid \gamma \in \Gamma\}$. A predicate $\NcAlt$ is {\em provably $\Delta$-complete in $U$} if $U \vdash \delta \to \NcAlt \delta$ for any $\delta \in \Delta$.

\begin{lemma}\label{theorem:generalReflectionVersusConsistency}
Let $U$ be a theory extending elementary arithmetic and let $\NcAlt$ be a normal provability predicate with dual consistency predicate $\NcAlt$.
If $\Gamma$ contains (a provable equivalence of) $\bot$ and $\NcAlt$ is provably $\neg \Gamma$-complete in $U$, then $\Gamma$-reflection for $\NcAlt$ is equivalent to $\PsAlt \top$. That is,
\[
U + \{ \NcAlt \gamma \to \gamma \mid \gamma \in \Gamma \} \ \ \equiv \ \ U + \PsAlt \top.
\]
\end{lemma}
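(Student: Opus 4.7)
The plan is to prove the two inclusions separately, both of which are essentially modal reasoning inside $U$.

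For the forward direction, $U + \{\NcAlt\gamma\to\gamma : \gamma\in\Gamma\} \vdash \PsAlt\top$, I would simply instantiate the reflection schema at (a formula provably equivalent to) $\bot\in\Gamma$. This gives $\NcAlt\bot \to \bot$, which is propositionally equivalent to $\neg\NcAlt\bot$, i.e.\ $\PsAlt\top$. The only subtlety is the ``up to provable equivalence'' clause for $\bot$: if $\Gamma$ contains a formula $\gamma_0$ with $U\vdash \gamma_0\leftrightarrow\bot$, then normality of $\NcAlt$ gives $U\vdash \NcAlt\gamma_0 \leftrightarrow \NcAlt\bot$, and the argument goes through unchanged.

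For the backward direction, $U + \PsAlt\top \vdash \NcAlt\gamma\to\gamma$ for each $\gamma\in\Gamma$, I would reason contrapositively inside $U$. Assume $\neg\gamma$. By provable $\neg\Gamma$-completeness of $\NcAlt$, we get $\NcAlt\neg\gamma$. Combined with the hypothesis $\NcAlt\gamma$, normality (specifically the $\sf K$ axiom together with $U\vdash \NcAlt(\gamma\to(\neg\gamma\to\bot))$, which follows from the tautology clause) yields $\NcAlt\bot$, contradicting $\PsAlt\top = \neg\NcAlt\bot$. Hence $\NcAlt\gamma\to\gamma$.

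Neither direction presents a serious obstacle; the whole argument is propositional modal reasoning in $\sf K$ plus the single extra axiom of $\neg\Gamma$-completeness. The only step worth stating carefully is the use of normality to combine $\NcAlt\gamma$ and $\NcAlt\neg\gamma$ into $\NcAlt\bot$, and the handling of the ``provable equivalent of $\bot$'' clause, both of which are routine. I would present the proof as a short paragraph giving the two directions, noting explicitly that all reasoning is carried out inside $U$ and so the resulting equivalences hold over $U$ as claimed.
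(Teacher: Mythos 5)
Your proof is correct and follows essentially the same route as the paper's: reflection instantiated at $\bot$ yields $\PsAlt\top$, and conversely $\neg\Gamma$-completeness together with normality turns a failure of reflection into $\NcAlt\bot$, contradicting $\PsAlt\top$. The only differences are that you spell out two steps the paper leaves implicit (the handling of a provable equivalent of $\bot$ via normality, and the explicit use of the $\sf K$ axiom to combine $\NcAlt\gamma$ and $\NcAlt\neg\gamma$ into $\NcAlt\bot$), which is fine.
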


\begin{proof}
The $\vdash$ direction follows directly since $\bot \in \Gamma$. The other direction follows directly from $\neg \Gamma$-completeness: suppose that $\NcAlt \gamma$ and $\PsAlt \top$ and suppose for a contradiction that $\neg \gamma$; by completeness we have $\NcAlt \neg \gamma$, whence by $\NcAlt \gamma$ and normality we get $\NcAlt \bot$, which contradicts $\PsAlt \top$.
\end{proof}

Stronger notions of provability can be related to stronger notions of consistency. For this purpose it is very useful to consider the provability predicates $[n]_T$ for $n\in \mathbb N$ and c.e.~theories $T$ where $[n]_T$ is a natural first-order formalization of ``provable from the axioms of $T$ together with some true $\Pi_n$ sentence", where a formula is $\Pi_n$ if its negation is $\Sigma_n$. More precisely, let ${\tt True}_{\Pi_n}$ be the standard partial truth-predicate for $\Pi_n$ formulas, which is itself of complexity $\Pi_n$. Then, we define
\[[n]_T\varphi\leftrightarrow \exists \pi \ \big ( {\tt True}_{\Pi_n}(\pi)\wedge\nc_T (\pi\rightarrow \varphi) \big ).\]
It is well-known that each such predicate is normal and the following observation follows directly from the definition.

\begin{observation}\label{theorem:trivialPiNcompleteness}
For $T$ a c.e.~theory extending \ea and $n\in \mathbb N$, the predicate $[n]_T$ is $\Pi_n$-complete.
\end{observation}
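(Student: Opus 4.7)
The plan is to verify that for every $\delta\in\Pi_n$, the base theory (implicitly $\ea$, matching the setting of the definition of normal provability predicate) proves $\delta\to [n]_T\delta$. So the goal is $\ea \vdash \delta \to [n]_T\delta$ for each $\Pi_n$ sentence $\delta$; the argument then adapts uniformly to $\Pi_n$ formulas with parameters.

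The only inputs I would use are two standard facts about the arithmetization:
(i) the partial truth predicate ${\tt True}_{\Pi_n}$ is defined so that, for each $\Pi_n$ formula $\delta$, $\ea$ proves the local instance of Tarski's schema $\delta\leftrightarrow {\tt True}_{\Pi_n}(\ulcorner\delta\urcorner)$; and
(ii) $\ea$ verifies that $T$ proves every propositional tautology, so in particular $\ea \vdash \nc_T(\ulcorner\delta\to\delta\urcorner)$ for every formula $\delta$.

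Given (i) and (ii), the proof is essentially an unpacking of the definition. Reason in $\ea$, fix $\delta\in\Pi_n$, and assume $\delta$. By (i) we obtain ${\tt True}_{\Pi_n}(\ulcorner\delta\urcorner)$. Combining this with $\nc_T(\ulcorner\delta\to\delta\urcorner)$ from (ii), the term $\pi := \ulcorner\delta\urcorner$ witnesses the existential quantifier in the defining formula
\[
[n]_T\delta \;\leftrightarrow\; \exists\pi\,\bigl({\tt True}_{\Pi_n}(\pi)\wedge \nc_T(\pi\to\delta)\bigr),
\]
yielding $[n]_T\delta$, as required.

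The only subtlety, which is really just bookkeeping rather than a genuine obstacle, is making the argument go through uniformly when $\delta$ has free variables (so that one obtains $\Pi_n$-completeness in the formula sense, not only for sentences). This is handled by using the standard property that $\ea \vdash \forall \vec x\,\bigl(\delta(\vec x)\leftrightarrow {\tt True}_{\Pi_n}(\ulcorner\delta(\dot{\vec x})\urcorner)\bigr)$ for $\delta\in\Pi_n$, together with the fact that $\nc_T(\ulcorner\delta(\dot{\vec x})\to\delta(\dot{\vec x})\urcorner)$ is $\ea$-provable uniformly in $\vec x$; both are routine.
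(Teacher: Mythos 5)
Your proposal is correct and takes exactly the route the paper intends: the paper offers no explicit proof, stating only that the observation ``follows directly from the definition'' of $[n]_T\varphi\leftrightarrow \exists \pi\,({\tt True}_{\Pi_n}(\pi)\wedge\nc_T(\pi\to\varphi))$, and your argument is precisely that direct verification, witnessing the existential by $\pi:=\ulcorner\delta\urcorner$ via the $\ea$-provable Tarski equivalence for ${\tt True}_{\Pi_n}$ together with the provably boxed tautology $\delta\to\delta$. Your closing remark on handling free variables uniformly (dotted numeral substitution, so that the witness remains a $\Pi_n$ sentence) is the right bookkeeping and is consistent with the paper's parametric use of the predicate in its reflection schemata.
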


From this observation, together with Lemma \ref{theorem:generalReflectionVersusConsistency}, we get that 
\begin{equation}\label{equation:nConsistencyVersusNreflection}
T + \la n\ra_T \top \ \equiv \ T + \{ \forall x\ \big( [n]_T \sigma(\dot x) \to \sigma(x) \big) \mid \sigma \in {\Sigma_n}\}.
\end{equation}
We can easily relate the reflection principle from this equation \eqref{equation:nConsistencyVersusNreflection} to a more standard notion of reflection in the following lemma.
\begin{lemma} For $T$ a c.e. theory extending \ea we have 
\[
T + \{ \, \forall x\ \big( [n]_T \sigma(\dot x) \to \sigma(x) \big) \mid \sigma \in {\Sigma_n}\} \ \equiv \ T + {\tt RFN}_{\Sigma_n}(T).
\]
\end{lemma}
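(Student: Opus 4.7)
The plan is to prove both inclusions separately. For the forward direction ($\supseteq$), I would observe that $\nc_T \varphi(\dot x) \to [n]_T \varphi(\dot x)$ holds provably in $\ea$ by instantiating the existential quantifier in the definition of $[n]_T$ with a fixed true $\Pi_n$ sentence (say $0{=}0$), so any instance of $\Sigma_n$-reflection for $[n]_T$ immediately yields the corresponding instance of ${\tt RFN}_{\Sigma_n}(T)$. This direction should be essentially a one-liner.

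For the converse ($\subseteq$), fix $\sigma(x) \in \Sigma_n$ and consider the auxiliary formula
\[
\chi(x, z) \ := \ {\tt True}_{\Pi_n}(z) \to \sigma(x),
\]
which is provably equivalent in $\ea$ to a $\Sigma_n$ formula in the free variables $x, z$, since ${\tt True}_{\Pi_n}$ is $\Pi_n$ and $\sigma$ is $\Sigma_n$. Applying ${\tt RFN}_{\Sigma_n}(T)$ to $\chi$, treating both $x$ and $z$ as parameters, yields $\forall x,z\, (\nc_T \chi(\dot x, \dot z) \to \chi(x, z))$. Now, assuming $[n]_T \sigma(\dot x)$, I would extract a witness $\pi$ with ${\tt True}_{\Pi_n}(\pi)$ and $\nc_T(\pi \to \sigma(\dot x))$, pass from the latter to $\nc_T \chi(\dot x, \dot \pi)$, apply reflection to get $\chi(x, \pi)$, and combine with ${\tt True}_{\Pi_n}(\pi)$ to conclude $\sigma(x)$.

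The main obstacle I anticipate is the passage from $\nc_T(\pi \to \sigma(\dot x))$ to $\nc_T({\tt True}_{\Pi_n}(\pi) \to \sigma(\dot x))$ inside the provability box, done uniformly in the code $\pi$. This rests on the standard fact that $\ea$ (hence $T$) proves the Tarski biconditional ${\tt True}_{\Pi_n}(z) \leftrightarrow z$ for $\Pi_n$-codes $z$, together with closure of $\nc_T$ under provable equivalence. The reasoning is routine but must be carried out at the level of formalized arithmetic rather than externally, since the witness $\pi$ is non-standardly quantified. Once this is in place, the remaining steps are straightforward schematic manipulation.
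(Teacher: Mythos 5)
Your proof is correct and takes essentially the same route as the paper's: the $\supseteq$ direction by witnessing the existential in $[n]_T$ with a trivially true $\Pi_n$ sentence, and the $\subseteq$ direction by extracting the witness $\pi$, observing that ${\tt True}_{\Pi_n}(z)\to\sigma(x)$ is provably equivalent to a $\Sigma_n$ formula, and applying uniform reflection with $\pi$ treated as a parameter. The only difference is that you make explicit the formalized Tarski-biconditional step from $\Box_T(\pi\to\sigma(\dot x))$ to $\Box_T({\tt True}_{\Pi_n}(\dot\pi)\to\sigma(\dot x))$, carried out uniformly in the possibly non-standard code $\pi$, which the paper silently absorbs when unfolding the definition of $[n]_T$.
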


\begin{proof}
``$\supseteq$" Reason in $T + \{ \, \forall x\ \big( [n]_T \sigma(\dot x) \to \sigma(x) \big) \mid \sigma \in {\Sigma_n}\}$ and fix some $\sigma\in \Sigma_n$ so that $\Box_T\sigma$. Then certainly $[n]_T \sigma$, whence $\sigma$, by reflection.

``$\subseteq$" Reason in $T + {\tt RFN}_{\Sigma_n}(T)$ and fix some $\sigma\in \Sigma_n$ such that $[n]_T\sigma$. Then, for some (possibly non-standard) $\Pi_n$ sentence we have that ${\tt True}_{\Pi_n}(\pi)$ and $\Box_T({\tt True}_{\Pi_n}(\pi) \to \sigma)$. Since ${\tt True}_{\Pi_n}(\pi) \to \sigma$ is provably equivalent to a $\Sigma_n$ sentence we obtain ${\tt True}_{\Pi_n}(\pi) \to \sigma$ by reflection. Since by assumption ${\tt True}_{\Pi_n}(\pi)$, we conclude $\sigma$.
\end{proof}

From this lemma, together with \eqref{equation:nConsistencyVersusNreflection} and Lemma \ref{theorem:LeivantsReflectionVersusInduction} we see the intimate relation between reflection, consistency statements and fragments of first-order arithmetic.

\begin{theorem}\label{theorem:intimateRelationConsistencyReflectionFragmentsArithmetic}
Given $n> 0$, \[\ea + \la n+1 \ra_\ea \top \ \equiv \  \ea + {\tt RFN}_{\Sigma_{n+1}}(\ea) \ \equiv \ \isig{n}.\]
\end{theorem}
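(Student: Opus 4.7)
The plan is to chain together the three results that have just been established, since the theorem follows from them essentially by transitivity of equivalence. There is no genuine new content; the main observation is simply that the chain lines up.

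First I would apply equation \eqref{equation:nConsistencyVersusNreflection} with $T = \ea$ and with the index parameter $n+1$ in place of $n$. This gives
\[
\ea + \la n+1\ra_\ea \top \ \equiv \ \ea + \{ \forall x\,([n+1]_\ea \sigma(\dot x) \to \sigma(x)) \mid \sigma \in \Sigma_{n+1}\}.
\]
Next I would invoke the lemma immediately preceding the theorem (again with $T = \ea$ and parameter $n+1$) to rewrite the right-hand side as $\ea + {\tt RFN}_{\Sigma_{n+1}}(\ea)$. This establishes the first equivalence. For the second equivalence, I would directly cite Theorem \ref{theorem:LeivantsReflectionVersusInduction} (the Leivant--Beklemishev result), which gives $\ea + {\tt RFN}_{\Sigma_{n+1}}(\ea) \equiv \isig{n}$ for every $n \geq 1$.

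Since all three pieces have already been proved in the paper, the only thing to check is that the hypotheses line up: equation \eqref{equation:nConsistencyVersusNreflection} was derived from Observation \ref{theorem:trivialPiNcompleteness} (which says $[n]_T$ is $\Pi_n$-complete, hence in particular $[n+1]_\ea$ is $\Pi_{n+1}$-complete) together with Lemma \ref{theorem:generalReflectionVersusConsistency}; and the preceding lemma applies to any c.e. theory extending $\ea$, so in particular to $\ea$ itself. Thus no additional work beyond quoting the three results is needed.

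The only point that might look like an obstacle is the off-by-one indexing: the consistency statement uses $\la n+1\ra_\ea \top$ while the induction principle is $\isig{n}$. This is not really an obstacle but rather the reason that the first equivalence must use equation \eqref{equation:nConsistencyVersusNreflection} and the subsequent lemma with parameter $n+1$, so that the reflection class matches the $\Sigma_{n+1}$ in Theorem \ref{theorem:LeivantsReflectionVersusInduction}. I would present the proof as a single displayed chain of equivalences with the three citations attached.
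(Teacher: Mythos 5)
Your proposal is correct and is exactly the paper's own argument: the paper states the theorem as an immediate consequence of equation \eqref{equation:nConsistencyVersusNreflection}, the lemma preceding the theorem, and Theorem \ref{theorem:LeivantsReflectionVersusInduction}, all instantiated at parameter $n+1$ just as you do. Your check that the hypotheses line up (the $\Pi_{n+1}$-completeness of $[n+1]_\ea$ via Observation \ref{theorem:trivialPiNcompleteness} feeding into Lemma \ref{theorem:generalReflectionVersusConsistency}) is the right thing to verify and nothing further is needed.
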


\subsection{Ordinal analysis beyond first-order}
Ignatiev showed in \cite{Ignatiev:1993:StrongProvabilityPredicates} that for a large variety of theories $T$, the joint behavior of the provability predicates $[n]_T$ can be described and fully understood via a relatively simple and well-behaved modal logic called \glp, first studied by Japaridze in \cite{Japaridze:1988}. Theorem \ref{theorem:intimateRelationConsistencyReflectionFragmentsArithmetic} is a clear witness of the expressiveness of this modal logic; as a matter of fact, Beklemishev has shown in \cite{Beklemishev:2004:ProvabilityAlgebrasAndOrdinals} that the computation of a proof-theoretical $\Pi^0_1$ ordinal for \pa along the lines of Schmerl's work \cite{Schmerl:1978:FineStructure} on transfinitely adding consistency to a weak base theory can be realized largely within this modal logic \glp. 

In comparison to the more established $\Pi^1_1$ or $\Pi^0_2$ analyses of a formal theory $T$, its $\Pi^0_1$ analysis is more fine-grained and is more intimately tied to the notion of consistency. The ordinals involved in the analysis actually are naturally represented within the logic \glp by {\em worms,} which are iterated consistency statements of the form $\langle n_1\rangle\hdots\langle n_m\rangle\top$.

However, worms in \glp correspond to ordinals below $\varepsilon_0$, so \glp would certainly not suffice for the analysis of theories which are substantially stronger than \pa. Thus, to extend the relation between ordinals and modal-logical terms beyond $\varepsilon_0$, first steps were made in \cite{Beklemishev:2005:VeblenInGLP,BeklemishevFernandezJoosten:2014:LinearlyOrderedGLP, FernandezJoosten:2012:WellOrders} by studying logics $\glp_\Lambda$ that contain for each ordinal $\alpha < \Lambda$ a modality $[\alpha]$. Observation \ref{theorem:trivialPiNcompleteness} suggests that one should go beyond first-order theories in order to interpret $\glp_\Lambda$ and in \cite{FernandezJoosten:2013:OmegaRuleInterpretationGLP} two of the authors provide an interpretation of $\glp_\Lambda$ within second-order arithmetic.

Our present goal is to provide analogues of Theorem \ref{theorem:intimateRelationConsistencyReflectionFragmentsArithmetic} for fragments of second-order arithmetic, using more powerful reflection principles. These principles will require reasoning about arbitrary sets, which poses a challenge in a proof-theoretic setting. Every natural number can be represented by a closed term; however, it is of course not the case that each set can be denoted by a syntactical name in any countable language. 

In \cite{BeesonScedrov:1984:ChurchThesisEtAl} this problem was addressed by resorting to a richer language with sufficiently many names around. Our way to surpass this complication is by introducing and studying what we call \emph{oracle provability} which allows us to reason in a formalized setting about externally quantified sets. In particular, we will introduce a new first-order predicate $\mathfrak O$ which we shall call the {\em oracle predicate} and which will be used to reason formally about arbitrary sets.\david{Calling oracle provability a `trick' puts it in a negative light, so I have removed it.}

Once a workable relation between reflection principles and fragments of arithmetic has been established, this will constitute a significant step forward in the $\Pi^0_1$ ordinal analysis and semi-finitary consistency proofs of second-order theories as anticipated in \cite{Joosten:2013:AnalysisBeyondFO}. We believe that this paper makes an important step towards this goal.

\subsection{Overview of the paper}

In Section \ref{section:SecondOrderTheoriesWithOracles} we will settle our notation and nomenclature and fix the formal language that we will work in. One new ingredient is the first-order oracle symbol $\mathfrak O$ which will, in a way, provide a name for any arbitrary set. We define formalized oracle provability in this new language and mention some basic properties.

In Section \ref{section:FragmentsOfSOA} we will define the fragments of second-order arithmetic which are relevant for this paper and in section \ref{section:NestedOracleOmegaRules} we formalize within these fragments the notion $\provor\lambda XT$ of provability in $T$ using an oracle for $X$ and at most $\lambda < \Lambda$ iterations of the $\omega$-rule. Next, in Section \ref{section:OracleConsistencyAndOracleReflection} we define and prove the basic properties of the notions of reflection and consistency based the provability predicates $[\lambda|X]_T^\Lambda$.

In Section \ref{section:PredicativeReflectionAndConsistency} we define the notions of predicative oracle reflection and consistency. Basically, predicative oracle reflection/consistency is the statement that ``if $\Lambda$ is a well-order, then we have reflection/consistency for iterations of the $\omega$-rule along $\Lambda$ with an oracle for $X$''. The section proves that $\atr$, the second-order system of Arithmetic Transfinite Recursion, can be proven from predicative consistency. Finally, Section \ref{section:CountableCodedModels} proves the converse implication so that we end up with a characterization of $\atr$ in terms of predicative oracle reflection and conistency.

We know from the literature that $\pa \equiv \{ \la n \ra \top \mid n<\omega\}.$\david{$n\mbox{-}{\tt Con}(\ea)$ was used before, but never defined.} Our results show that $\atr$ can similarly be reduced to sufficiently strong consistency or reflection principles.
Using slightly suggestive notation, one can paraphrase our result as
\[
\atr \equiv \eca + ``\{  \alpha\mbox{-}{\OCons}(\eca) \mid \alpha \mbox{ a well-order} \}",
\]\david{No me encanta esta forma de escribirlo, pues da la impresi\'on de ser un esquema y no de tener una cuantificaci\'on interna, como es el caso.}
where $\eca$ is the second-order pendant of \ea.
\section{Second-order theories with oracles}\label{section:SecondOrderTheoriesWithOracles}
In this section we will define the fragments of second-order arithmetic which are relevant for this paper. Moreover, we will formalize a notion of provability where we use an oracle symbol which provides a name for an arbitrary set much like one has numerals to name natural numbers.
 
\subsection{Conventions of syntax}\label{SubSecSynt}

We will work mainly in the language $\slang$ of second-order arithmetic, with primitive symbols ${\tt 0},{\tt 1}, +_\omega ,\times_\omega ,  <_\omega , = , \in ,$ and a term $x^y$ for exponentiation representing the standard constants, operations and relations on the natural numbers. We use the subindex $\omega$ because we will be interested in different linear orders throughout the text and thus do not want to reserve these symbols for their standard use. However, we will omit such subindices whenever this does not lead to confusion.

We fix some primitive recursive G\"odel numbering, mapping a formula $\psi\in\slang$ to its corresponding G\"odel number $\ulcorner \psi \urcorner$, and similarly for terms and sequents of formulas (used to represent derivations). We will assume that the G\"odel numbering is natural in that the G\"odel number of a strict substring is numerically smaller than the G\"odel number of the entire string.

Moreover, we fix some set of \emph{numerals} which are terms so that each natural number $n$ is denoted by exactly one numeral written as $\overline n$. Since we will be working mainly inside theories of arithmetic, we will often identify $\psi$ with $\ulcorner \psi \urcorner$ or even with $\overline{\ulcorner \psi \urcorner}$ for that matter.

As is customary, we use ${\bm\Delta}^0_0$ to denote the set of all formulas (possibly with set parameters but without the occurrence of $\mathfrak O$) where no second-order quantifiers appear and all first-order quantifiers are ``bounded'', that is, of the form $\forall x<_\omega y \ \varphi$ or $\exists x<_\omega y \ \varphi$. We simultaneously define ${\bm\Sigma}^0_{0}={\bm\Pi}^0_{0}={\bm\Delta}^0_0$ and recursively define ${\bm\Sigma}^0_{n+1}$ to be the set of all formulas of the form $\exists x_0\hdots\exists x_m \varphi$ with $\varphi\in {\bm\Pi}^0_{n}$, and similarly ${\bm\Pi}^0_{n+1}$ to be the set of all formulas of the form $\forall x_0\hdots\forall x_m \varphi$ with $\varphi\in {\bm\Sigma}^0_{n}$. We denote by ${\bm\Pi}^0_\omega$ the union of all ${\bm\Pi}^0_n$; these are the {\em arithmetic formulas.} 

The classes ${\bm\Sigma}^1_n,{\bm\Pi}^1_n$ are defined analogously but using second-order quantifiers and setting ${\bm \Sigma}_0^1 = {\bm\Pi}^1_0 = {\bm\Delta}^1_0 = {\bm\Pi}^0_\omega$. It is well-known that every second-order formula is equivalent to another in one of the above forms.  We use a lightface font for the analogous classes where no set-variables appear free: $\Delta^m_n,\Pi^m_n,\Sigma^m_n$. The set of free variable of a formula $\varphi$ will be denoted by ${\sf FV}(\varphi)$.

We may also wish to enrich these classes with new formulas. If $\theta (x_1, \ldots, x_n)$ is any formula, we define $\Pi^m_n(\theta),\Sigma^m_n(\theta),$ etc. as above, except that we add any substitution instance $\theta (t_1, \ldots, t_n)$ as if it were an atomic formula. If $\theta$ is of the form $x\in Y$, we may write $\Gamma(Y)$ instead of $\Gamma(\theta)$. We may similarly define $\Gamma(\theta_1,\hdots,\theta_n)$ when adding multiple formulas as atomic. As a special case we mention $\Gamma(\mathfrak O)$, when the oracle is allowed to appear.

We will say a theory $T$ is {\em elementarily representable} or simply {\em representable} if it provably contains predicate logic, is closed under Modus Ponens, and there is a $\Delta^0_0$ formula ${\provfor}_T(x,y)$ which holds if and only if $x$ codes a derivation in $T$ of a formula coded by $y$.

We will also use the following pseudo-terms to simplify notation, where an expression $\varphi(t(\vec x))$ should be seen as a shorthand for $\exists y<_\omega s(\vec x)(\psi(\vec x,y)\wedge\varphi(y))$, with $\psi$ is a $\Delta^0_0$ formula\david{\textquestiondown $\Delta^0_0$ es correcto?} defining the graph of the intended interpretation of $t$ and $s$ a standard term bounding the values of $t(\vec x)$:

\begin{enumerate}

\item A term $\langle x,y\rangle$ which returns a code of the ordered pair formed by $x$ and $y$ and projection terms so that $(\langle x,y\rangle)_0 = x$ and $(\langle x,y\rangle)_1 = y$.

\item A term $x{[y/z]}$ which, when $x$ codes a formula $\varphi(v)$, $y$ a variable $v$ and $z$ a term $t$, returns the code of $\varphi(t)$. Otherwise, its value is unspecified, for example it could be the default $\ulcorner\bot\urcorner$.

\item A term $x\to y$ which, when $x,y$ are codes for $\varphi,\psi$, returns a code of $\varphi\to\psi$, and similarly for other Booleans and quantifiers.

\item A term $\overline x$ mapping a natural number to the code of its numeral.

\item For every formula $\varphi$ and variables $x_0, \ldots, x_m$, a term $\varphi(\dot x_0, \ldots, \dot x_m)$ which, given natural numbers $n_0, \ldots, n_m$, returns the code of the outcome of $\varphi[\vec x/\vec{\bar n}]$, i.e., the code of $\varphi(\bar n_0, \ldots, \bar n_m)$.

\item A designated unary predicate ${\mathfrak O}(\cdot)$ which we will use to represent an ``oracle''. 
\end{enumerate}
The only purpose of using these pseudo-terms is to shorten complex formulas for the sake of legibility. We write $\Box_T \varphi(\dot x_0, \ldots, \dot x_n)$ as shorthand for $\exists \psi\ (\psi = \varphi(\dot x_0, \ldots, \dot x_n) \wedge \Box_T \psi)$ and adhere to the same convention when using other notions of provability.

We will also use the notation for these terms in the metalanguage, as we have already seen with our notation for numerals. An exception to this is the usage of $\mathfrak O$ which will involved when representing sets in a formalized setting. All the classes of formulas $\Gamma$ we consider do not contain the symbol $\mathfrak O$ unless we explicitly mention otherwise, using the notation $\Gamma(\mathfrak O)$ defined below.

\subsection{Oracle provability}

The oracle $\mathfrak O$ will be used to add information about any set of numbers to our theory $T$.\david{Removed `indicator function' because we never really do represent it in function form.} To be precise, given a set $A\subseteq\mathbb N$, define $T|A$ to be the theory
\[
T \ + \ 
\{\mathfrak O(\overline n):n\in A\} \ + \
\{\neg\mathfrak O(\overline n):n\not\in A\} \ + \ \{ \exists X\, \forall x\, \big(x\,{\in}\, X \, \leftrightarrow \, \mathfrak O(x)\big)\}.
\]

It should be clear that if $T$ is representable then $T|A$ has a $\Delta^0_0(A)$ axiomatization\david{This used to say $\Delta^0_0(X,\mathfrak O)$. But the oracle appears {\em in} $T$, not in the description of $T$.} (i.e., the only set variable that may appear is $A$) and provability in $T|A$ is $\Sigma^0_1(A)$. To be more precise, there exist formulas ${\tt Axiom}_{T| A}(x,A) \in \Delta^0_0(A)$, as well as ${\provfor}_{T|A}(x,y,A) \in \Delta^0_0(A)$, and $\Box_{T| A}(x,A)\in \Sigma^0_1(X)$ so that 
\[
\begin{array}{llll}
\varphi \mbox{ is an axiom of } T| A & \Leftrightarrow & \mathbb{N} \models {\tt Axiom}_{T|A}(\ulcorner \varphi \urcorner,A), & \\
\pi \mbox{ is  a proof of } \varphi \mbox{ in the theory } T| A & \Leftrightarrow & \mathbb{N} \models {\provfor}_{T|A}(\ulcorner \pi \urcorner, \ulcorner \varphi \urcorner,A) & \mbox{ and, }\\
T | A \vdash \varphi & \Leftrightarrow & \mathbb{N} \models \Box_{T| A}(\ulcorner \varphi \urcorner,A).
\end{array}
\]
For example, we can define 
\[
\begin{array}{llllc}
{\tt Axiom}_{T| A}(\varphi,A) & := & {\tt Axiom}_{T}(\varphi)  & \vee & \exists\, x{<}\varphi\ (\varphi = \ulcorner \mathfrak O (\overline x) \urcorner \wedge x\in X) \\
 & & &  \vee  & \exists\, x{<}\varphi\ (\varphi = \ulcorner \neg \mathfrak O (\overline x) \urcorner \wedge x\notin X)\\
 & & &  \vee  &  \varphi = \ulcorner \exists X\, \forall x\, \big(x\,{\in}\, X \, \leftrightarrow \, \mathfrak O(x)\big)\urcorner,
\end{array}
\]
so that $\provfor_{T|A} (d,\varphi)$ will just become as $\provfor_{T} (d,\varphi)$ where every occurrence of ${\tt Axiom}_{T}(\pi)$ for some $\pi$ is replaced by ${\tt Axiom}_{T| A}(\pi,A)$.
As before, we define $\Box_{T|A} \varphi$ by $\exists x \ \provfor_{T|A} (x,\varphi)$. When working in $T|A$ we will write $x\in \overline A$ instead of $\mathfrak O(x)$, provided this does not lead to confusion. As is standard, one may use a single set to represent sets $A_1,\hdots,A_n$ by $\{\langle i,x\rangle:x\in A_i\}$, and as such we will freely use our oracle to interpret tuples of sets. If working on $T|A_1,\hdots,A_n$ we will write $x\in \overline A_i$ instead of $\mathfrak O(\langle i,x\rangle)$.

{
The oracle $\mathfrak O$ is a technical tool to be able to talk about sets under the scope of a box. For example, it is immediate that for any theory $T$, we provably have   
\[
\forall X \ \forall x \ \big (x \in X \ \rightarrow \ \Box_{T|X} \ \dot x \in \overline X\big) .
\]
However, a word of warning is due here. The ``bar'' notation we have introduced is informal and will only be used when it does not lead to ambiguity. In particular, nested uses of oracle provability could lead to confusion. Certain principles will go through for iterated oracle provability like $x\in X \to \Box_{T|X} \Box_{T|\overline X}\dot x \in {\overline X}$. Note that in this context, ``$\Box_{T|\overline X}$" refers to provability in the theory whose axioms are given by 
\[
\begin{array}{r}
{\tt Axiom}_{T}(\varphi) \vee \exists \, x{<}\varphi\ (\varphi = \ulcorner \mathfrak O (\overline x) \urcorner \wedge \mathfrak O (x)) \vee \exists \, x{<}\varphi\ (\varphi = \ulcorner \neg \mathfrak O (\overline x) \urcorner \wedge \neg \mathfrak O (x))\\
\vee \varphi = \ulcorner \exists X\, \forall x\, \big(x\,{\in}\, X \, \leftrightarrow \, \mathfrak O(x)\big)\urcorner.\\
\end{array}
\]
Since we use only one first-order symbol $\mathfrak O$ to write play the role of an oracle, in our framework the $X$ in $x\in X$ in for example $\Box_{T|X} \big( x\in X \to  \Box_{T|X}\dot x \in \overline X \big)$ is really given at the top level and not at ``one level of provability deep". As such we should be very careful in working with iterations of oracle provability and there are various other subtleties too. However, for the purpose of this paper these subtleties are not salient and we shall only be concerned with non-nested occurrences of provability.}

Finally, we mention that we would like to make inferences like $\Box_{T|X}\, \varphi (\overline X) \Rightarrow \Box_{T|X} \, \exists X\varphi (X)$. This is why in $T|X$ we have included the axiom $\exists X\, \forall x \ (x\in X \ \leftrightarrow \ \mathfrak O (x))$.

\section{Fragments of second-order arithmetic}\label{section:FragmentsOfSOA}

Important fragments of second-order arithmetic are typically characterized by their set-existence axioms. In this section we shall revisit the fragments which are relevant for this paper.

\subsection{Comprehension and Induction}

It is important in this paper to keep track of the second-order principles that are used; below we describe the most basic ones. Here, $<$ denotes the standard ordering on the naturals:
\begin{center}
\[\begin{array}{ll}
\Gamma\text{-}\compax&\exists X\forall x\ \big (x\in X\leftrightarrow \varphi(x)\big ) \ \ \ \ \ \ \ \ \ \hfill \text{ where }\varphi\in\Gamma \text{ and }X\text{ is not free in }\varphi;\\
{\tt I}\Gamma& \varphi({\tt 0})\wedge\forall x\, \big (\varphi(x) \to\varphi(x+ {\tt 1}) \big )\ \to\ \forall x \ \varphi(x) \hfill \text{ where }\varphi\in\Gamma;\\
{\tt Ind}& {\tt 0}\in X\wedge \forall x\ \big (x\in X\rightarrow x+{\tt 1}\in X \big )\ \to\ \forall x\, (x\in X).
\end{array}\]

\end{center}
We assume {\em all} theories extend two-sorted classical first-order logic, so that they include Modus Ponens, Generalization, etc., as well as Robinson's Arithmetic $\Robinson$, i.e. Peano Arithmetic without induction.

In the list below, recall that we have included exponentiation in our language, which is essential for the weaker theories.\david{I have just included exponentiation in all theories. It is needed for the weak ones and does not affect the strong ones.}

\begin{center}
\begin{tabular}{ll}
$\eca:$&\Robinson + ${\tt Ind}$+${\bm\Delta}^0_0$-$\compax$;\\
${\rm RCA}_0^\ast:$&\Robinson + ${\tt Ind}$+${\bm\Delta}^0_1$-$\compax$;\\
$\rca:$&$\Robinson + {\tt I}{\bm\Sigma}^0_1$+${\bm\Delta}^0_1$-$\compax$;\\
$\aca :$&$\Robinson + {\tt Ind}$+${\bm\Pi}^0_\omega$-$\compax$;\\
${\rm ACA} :$&$\Robinson + {\tt I}{\bm \Pi}^1_\omega$+${\bm\Pi}^0_\omega$-$\compax$.\\
\end{tabular}
\end{center}
We mention these from weakest to strongest but note that $\aca$ is still a relatively weak subsystem of second-order arithmetic, being arithmetcially conservative over \pa. It will later be useful to observe the following (see \cite[Lemma VIII.1.5]{Simpson:2009:SubsystemsOfSecondOrderArithmetic}):

\begin{theorem}\label{TheoFinAx}
$\rca$ and $\aca$ are finitely axiomatizable.
\end{theorem}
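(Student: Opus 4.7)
The plan is to collapse the comprehension schemes (and, for $\rca$, the $\Sigma^0_1$-induction scheme) to single axioms by means of \emph{universal formulas}. The basic observation is that for each standard class ${\bm\Gamma} \in \{{\bm\Sigma}^0_n, {\bm\Pi}^0_n : n \geq 1\}$ there is a formula $U_{\bm\Gamma}(e, x, Y) \in {\bm\Gamma}$ with a single set parameter $Y$ such that, provably in $\eca$, every $\varphi(x, Y) \in {\bm\Gamma}$ is equivalent to $U_{\bm\Gamma}(\bar e, x, Y)$ for an appropriate numeric code $e$. Additional numeric parameters are absorbed into $x$, and several set parameters into $Y$ via pairing.

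For $\aca$ the axioms of $\Robinson$ form a finite list and ${\tt Ind}$ is already a single axiom, so it suffices to finitely axiomatize ${\bm\Pi}^0_\omega$-$\compax$ on top of these. I would take comprehension for the universal ${\bm\Sigma}^0_1$ formula as a single axiom:
\[
\forall e\, \forall Y\, \exists Z\, \forall x\, \big(x \in Z \leftrightarrow U_{{\bm\Sigma}^0_1}(e, x, Y)\big).
\]
This delivers full ${\bm\Sigma}^0_1$-comprehension, from which ${\bm\Pi}^0_1$-comprehension follows by complementation. Iterating the axiom relative to previously constructed sets produces ${\bm\Sigma}^0_n$- and ${\bm\Pi}^0_n$-comprehension for each standard $n$, hence all of ${\bm\Pi}^0_\omega$-$\compax$.

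For $\rca$ the same method applies, but two further schemes must be condensed. The induction scheme ${\tt I}{\bm\Sigma}^0_1$ is replaced by a single instance of set-style induction applied to $U_{{\bm\Sigma}^0_1}$, and ${\bm\Delta}^0_1$-$\compax$ is replaced by the conditional axiom
\[
\forall e\, \forall f\, \forall Y\, \Big(\forall x\, \big(U_{{\bm\Sigma}^0_1}(e,x,Y) \leftrightarrow U_{{\bm\Pi}^0_1}(f,x,Y)\big) \to \exists Z\, \forall x\, \big(x \in Z \leftrightarrow U_{{\bm\Sigma}^0_1}(e,x,Y)\big)\Big).
\]
Each original instance then follows by substituting codes for suitable ${\bm\Sigma}^0_1$ and ${\bm\Pi}^0_1$ presentations of the intended ${\bm\Delta}^0_1$ formula.

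The main obstacle I anticipate is the careful construction of the universal formulas $U_{\bm\Gamma}$ and the verification, inside the finitely axiomatized fragments themselves, that the equivalence $\varphi(x, Y) \leftrightarrow U_{\bm\Gamma}(\bar e, x, Y)$ is provable with sufficient uniformity to be used as a substitution step inside the condensed axiomatizations. This rests on standard facts about the arithmetization of syntax and on partial truth predicates for the bounded levels of the arithmetical hierarchy, all of which are available in $\eca$; the full details are carried out in Simpson's Lemma VIII.1.5.
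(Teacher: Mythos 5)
Your proposal is correct and coincides with the paper's treatment: the paper offers no proof of its own, merely citing Simpson's Lemma VIII.1.5, and your universal-formula condensation (a single comprehension axiom for a universal ${\bm\Sigma}^0_1$ formula, complementation and external iteration for the arithmetical hierarchy, plus condensed induction and conditional ${\bm\Delta}^0_1$-comprehension axioms for $\rca$) is exactly the standard argument behind that lemma. The only point to tidy is the remark that numeric side parameters are ``absorbed into $x$'': for the induction axiom this would change the induction variable, so the universal formula should carry a separate, universally quantified parameter slot --- a cosmetic fix that does not affect the argument.
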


The system \eca stands for {\em Elementary Comprehension Axiom} and was introduced in \cite{FernandezJoosten:2013:OmegaRuleInterpretationGLP} as the second-order equivalent of Elementary Arithmetic \ea. In order to relate \eca to the more classical systems we need to mention $\Sigma_1^0$--bounding.
The principle of $\Sigma_1^0$--bounding is given by 
\[
\forall \, x{<}a\, \exists z \ \varphi(x,y,z) \ \to \exists b \, \forall \, x{<}a\, \exists \,z{<}b \ \varphi(x,y,z)
\]
with $\varphi \in \Sigma^0_1$, and is also referred to as $\Sigma^0_1$ collection. 

\begin{lemma}
The first-order part of \eca is \ea and the
first-order part of $\rcaa$ is \ea plus $\Sigma_1^0$--bounding.
Moreover, $\rcaa$ is $\Pi^0_2$-conservative over \eca but not $\Sigma^0_2$ conservative. 
\end{lemma}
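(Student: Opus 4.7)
My plan is to establish the four assertions by proving axiomatic inclusions in both directions, and then invoking a classical conservation result for the $\Pi^0_2$ claim. For the forward inclusions ($\eca$ contains $\ea$, and $\rcaa$ contains $\ea$ plus $\Sigma_1^0$-bounding), the key move is to convert induction for numbers into induction for sets: given a ${\bm\Delta}^0_0$ (respectively ${\bm\Delta}^0_1$) formula $\varphi(x)$ with only number parameters, form $\{x : \varphi(x)\}$ by comprehension and apply ${\tt Ind}$. This immediately yields ${\bm\Delta}^0_0$-induction in $\eca$, hence all of $\ea$ since exponentiation is part of the base language. In $\rcaa$, deriving $\Sigma_1^0$-bounding from ${\bm\Delta}^0_1$-comprehension plus ${\tt Ind}$ requires a coded-sequence argument: for a premise $(\forall x{<}a)\,\exists z\,\varphi(x,z)$ with $\varphi\in\Sigma_1^0$, one forms, via comprehension applied to a $\Delta^0_1$ description, the graph of the ``least witness'' function on $x<a$, and then uses exponentiation to bound the codes of these witnesses.

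For the reverse inclusions, I would use a model-theoretic closure argument: given $\mathfrak{M}\models\ea$, let $\mathcal{S}$ be the collection of ${\bm\Delta}^0_0$-definable subsets of $|\mathfrak{M}|$ with parameters from $\mathfrak{M}$; then $(\mathfrak{M},\mathcal{S})\models\eca$ and has the same first-order theory as $\mathfrak{M}$, giving conservativity. For $\rcaa$, an analogous construction starts from $\mathfrak{M}\models\ea+\Sigma_1^0$-bounding and iteratively closes under ${\bm\Delta}^0_1$-definability with parameters from previously added sets, taking the countable union to verify $\Delta^0_1$-comprehension; bounding in $\mathfrak{M}$ is what ensures this closure process behaves well and preserves the ambient first-order theory.

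With the first-order axiomatizations in hand, the $\Pi^0_2$-conservativity of $\rcaa$ over $\eca$ reduces to the statement that $\ea+\Sigma_1^0$-bounding is $\Pi^0_2$-conservative over $\ea$. This is the classical Parsons--Paris--Kirby theorem (the ``$n=0$'' instance of $\Pi_{n+2}$-conservativity of $\Sigma_{n+1}$-bounding over $\Sigma_n$-induction), which I would invoke directly, its proof proceeding either by cut-elimination or by a model-theoretic indicator argument. For non-conservativity at $\Sigma^0_2$, it suffices to exhibit a single $\Sigma^0_2$ consequence of $\Sigma_1^0$-bounding that fails in some model of $\ea$; natural candidates are totality statements for functions whose recursive definition essentially uses collection, obtained by combining an unbounded $\Sigma_1^0$-definable family of witnesses with the bounding principle.

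The main obstacle I anticipate is the derivation of $\Sigma_1^0$-bounding inside $\rcaa$: the naive ``set of least witnesses'' is not manifestly ${\bm\Delta}^0_1$, since the clause ``no smaller $z'$ satisfies $\varphi(x,z')$'' is $\Pi^0_1$. The argument must exploit exponentiation, bounded search, and the rather delicate interplay between ${\tt Ind}$ available only as set-induction and the $\Sigma_1^0$-style reasoning the bounding schema demands; formalizing this carefully is where most of the technical work will lie.
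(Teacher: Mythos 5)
Your overall architecture matches the paper's: the paper also proves the $\eca$ half by expanding a model $\model\models\ea$ with its $\Delta^0_0$-definable sets, and also reduces $\Pi^0_2$-conservativity to the well-known first-order fact that $\ea$ plus $\Sigma^0_1$-collection is $\Pi^0_2$-conservative over $\ea$. However, your plan for the \emph{failure} of $\Sigma^0_2$-conservativity contains a step that fails in principle. You propose to separate the theories by ``totality statements for functions whose recursive definition essentially uses collection.'' But a totality statement $\forall x\,\exists y\,\psi(x,y)$ is a $\Pi^0_2$ sentence, and by the very $\Pi^0_2$-conservativity you invoke in the preceding step, no $\Pi^0_2$ consequence of $\ea$ plus $\Sigma^0_1$-bounding can be unprovable in $\ea$. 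So your candidate class of separating sentences is empty: any witness must be genuinely $\Sigma^0_2$ and not $\Pi^0_2$. The paper's witness is the scheme of the $\Delta^0_1$ minimal number principle, which is known to be $\Sigma^0_2$-axiomatizable, provable from $\Sigma^0_1$-collection over $\ea$, but not provable in $\ea$.

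A second, lesser issue concerns deriving $\Sigma^0_1$-bounding inside $\rcaa$, which you rightly flag as the technical crux but whose sketched resolution is circular. The graph of the least-witness function is indeed available: minimizing the code $u=\langle y,w\rangle$ of a witness-plus-verification pair for a $\Sigma^0_1$ matrix gives a $\Delta^0_0$-definable graph, so the corresponding set exists by comprehension. But what you then need is exactly that a function with $\Delta^0_0$-definable graph, total on $[0,a)$, has bounded range there -- and over $\ea$ this functional bounding is equivalent to $\Sigma^0_1$-bounding itself, so nothing has been gained. Nor does ${\tt Ind}$ obviously help: the natural induction predicate (``the witnesses below $x$ admit a common bound'') is $\Sigma^0_1$ in the set parameter and is not delivered by $\Delta^0_0$- or $\Delta^0_1$-comprehension. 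The paper does not attempt this derivation at all: it cites Simpson--Smith (1986) for the characterization of the first-order part of $\rcaa$ (your model-theoretic closure argument for the converse direction is likewise part of that citation), which is where this nontrivial content lives.
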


\begin{proof}[Proof sketch]
In \cite{SimpsonSmith:1986:FactorizationOfPolynomials} the above-mentioned characterization of the first-order part of $\rca^\ast$ is proven. 

If $\model$ is a model of \ea then it is easy to check that $\la \model,S\ra$ is a model of \eca, where $S$ is the set of all $\Delta_0^0$-definable subsets of $\model$. 
This proves that the first-order part of \eca is just \ea.

It is well-known that $\ea$ plus $\Sigma^0_1$ collection is $\Pi^0_2$ conservative over \ea.\david{Reference?}
Moreover, the scheme of $\Delta_1^0$ minimal number principle is known to be $\Sigma^0_2$-axiomatizable, provable from $\Sigma^0_1$ collection over \ea, but not provable in \ea. This establishes that $\Sigma^0_2$ conservativity fails. \end{proof}

\subsection{Transfinite induction and transfinite recursion}\label{section:TransfiniteInductionAndTransfiniteRecursion}

Another principle that will be relevant in this work is {\em transfinite recursion,} but this is a bit more elaborate to describe. For simplicity let us assume that $\slang$ contains only monadic set-variables. Binary relations and functions are represented by coding pairs of numbers. We shall establish a few conventions for working with binary relations in second-order arithmetic.

We need to represent ordinals in second-order arithmetic. For this we will view a set $\Lambda$ as coding a pair $\langle |\Lambda|,<_\Lambda\rangle$. As is standard, we write $x<_\Lambda y$ instead of $\langle x,y\rangle\in \mathord <_\Lambda$. Let ${\tt linear}(\Lambda)$ be a formula naturally asserting that $\Lambda$ is a linearly ordered set, and define
\[\begin{aligned}
{\tt wo}(\Lambda)&={\tt linear}(\Lambda)\\
&\wedge\ \forall X\subseteq |\Lambda|\,  \Big(\exists x\in X \to \exists y\in X \, \forall z\in X \ \big(y\leq_\Lambda z\big)\Big).
\end{aligned}
\]
We will use lower-case Greek letters for elements of $|\Lambda|$, and boldface natural numbers to denote finite ordinals, so that ${\bm n}$ is $\{0,\hdots,n-1\}$ with the usual ordering. When it is clear from context that we are working within $\Lambda$ we may write $\xi<\zeta$ instead of $\xi<_\Lambda\zeta$, and similarly write $\xi<\Lambda$ instead of $\xi\in|\Lambda|$. A boldface $\bm \omega$ is the standard ordering on the naturals and a lightface $\omega$ denotes a natural number with order-type $\omega$ within a larger ordinal.

We will say an ordinal $\Lambda$ is {\em additive} if it comes equipped with a binary operation $+_\Lambda\colon |\Lambda|^2\to |\Lambda|$ satisfying the usual rules of ordinal addition; as before we may omit the subindex and just write $\alpha+\beta$ instead of $\alpha+_\Lambda \beta$. Note that additive ordinals also admit multiplication by natural numbers. Some of our results will be presented in additive ordinals, but this is not a problem over theories extending $\aca$ since one can readily replace $\Lambda$ by ${{\bm\omega}^\Lambda}$, which will remain provably well-ordered assuming that $\Lambda$ is.

We are interested in the theory $\atr$, in which new sets may be defined using {\em transfinite recursion.} Transfinite recursion is the principle that sets may be defined by iterating a formula along a well-order. To formalize this, let us consider a set $X$ whose elements are of the form $\langle \xi,x\rangle$. We shall write $x \in X_\xi$ for $\la \xi, x\ra \in X$.

Given a formula $\varphi(X)$, define $\varphi(X_{{<_\Lambda} \lambda})$ to be the formula where every occurrence of $t\in X$ in $\varphi$ is replaced by $(t)_0 <_\Lambda\lambda \ \wedge \  t\in X$. Then define ${\tt TR}^\Lambda(\varphi,X)$ to be the formula
\[
\forall \, \xi <\Lambda \, \forall x \ \Big(x\in X_\xi \ \leftrightarrow \  \varphi(x,X_{<_\Lambda\xi}) \Big).
\]
Finally, given a set of formulas $\Gamma$ we define the schema
\begin{center}
\begin{tabular}{lll}
${\tt TR}\text{-}\Gamma$:&$\forall \Lambda\ \Big( {\tt wo}({\Lambda})\rightarrow \exists Y\, {\tt TR}^{\Lambda}(\varphi,Y)\Big)$&for $\varphi\in\Gamma.$
\end{tabular}
\end{center}
We can now write down the axiom schema for Arithmetic Transfinite Recursion:
\begin{center}
\begin{tabular}{ll}
$\atr:$&\Robinson + $\tt Ind$+${\tt TR}$-${\bm\Pi}^0_\omega$.
\end{tabular}
\end{center}
The system ${\rm ATR}_0$ is commonly associated with Predicative Analysis and will be the main focus of this paper.\\
\medskip

In various proofs we will reason by induction along a well-order. By ${\transin}^\Lambda(\varphi)$ we denote the transfinite induction axiom for $\varphi$ along the ordering ${<_\Lambda}$:
\[
{\transin}^\Lambda(\varphi) \ := \Big ( \forall \, \xi <\Lambda \ \big(\forall \, \zeta <_\Lambda \xi \ \varphi(\zeta)\to \varphi(\xi)\big) \Big ) \to \forall \, \xi < \Lambda  \ \varphi (\xi).
\]
We will write $\varphi$-${\compax}$ instead of $\{\varphi\}$-${\compax}$, i.e., the instance of the comprehension axiom stating that $\{ x \mid \varphi(x) \}$ is a set. 
The following easy lemma tells us that we have access to transfinite induction for formulas of the right complexity (see \cite{FernandezJoosten:2013:OmegaRuleInterpretationGLP}):

\begin{lemma}\label{theorem:WellOrderingWithComprehensionImpliesTI}
In any second-order arithmetic theory containing predicate logic we can prove 
\[
{\tt wo}({\Lambda}) \wedge (\neg\varphi)\text{-}{\compax} \to {\transin}^\Lambda(\varphi).
\]
\end{lemma}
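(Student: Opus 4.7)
The proof is a straightforward least-element argument, the only subtlety being that we must make sure the second-order well-ordering axiom can actually be invoked, which is precisely why the comprehension hypothesis is placed in the statement.

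First I would reason inside the ambient theory, assuming ${\tt wo}(\Lambda)$, the instance $(\neg\varphi)\text{-}{\compax}$, and the inductive hypothesis
\[
\forall\, \xi<\Lambda\ \big(\forall\, \zeta <_\Lambda \xi\ \varphi(\zeta) \to \varphi(\xi)\big).
\]
Using $(\neg\varphi)\text{-}{\compax}$, I form the set $X=\{\xi<\Lambda : \neg\varphi(\xi)\}$. Then I argue by contradiction: suppose $\forall\, \xi<\Lambda\ \varphi(\xi)$ fails; then there is some $\xi_0<\Lambda$ with $\neg\varphi(\xi_0)$, so $\xi_0 \in X$ witnesses that $X$ is nonempty.

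Next I invoke the second-order formulation of well-foundedness that is built into ${\tt wo}(\Lambda)$: every nonempty subset of $|\Lambda|$ has a ${<_\Lambda}$-least element. Applied to $X$, this yields some $\xi^*$ such that $\xi^* \in X$ and for every $\zeta <_\Lambda \xi^*$ we have $\zeta\notin X$, i.e.\ $\varphi(\zeta)$. Plugging $\xi^*$ into the inductive hypothesis gives $\varphi(\xi^*)$, contradicting $\xi^* \in X$. Hence the assumption fails and $\forall\, \xi<\Lambda\ \varphi(\xi)$, which is ${\transin}^\Lambda(\varphi)$.

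The only potential obstacle is making sure that the set-quantifier in the definition of ${\tt wo}(\Lambda)$, which ranges over all subsets of $|\Lambda|$, can in fact be instantiated with the set carved out for $\neg\varphi$; this is exactly the reason the hypothesis includes $(\neg\varphi)\text{-}{\compax}$ rather than some weaker comprehension. Since predicate logic suffices for the least-element argument once the relevant set is available, no further background theory is needed, which is why the lemma holds over the weakest possible base.
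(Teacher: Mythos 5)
Your least-element argument is correct and is essentially the intended proof: the paper gives no argument of its own for this lemma, deferring to \cite{FernandezJoosten:2013:OmegaRuleInterpretationGLP}, and the standard proof there is precisely the minimal-counterexample argument you describe. One pedantic caveat: the instance $(\neg\varphi)$-$\compax$ literally yields the set $\{x \mid \neg\varphi(x)\}$ rather than $\{\xi<\Lambda \mid \neg\varphi(\xi)\}$, and since the set quantifier in ${\tt wo}(\Lambda)$ ranges only over subsets of $|\Lambda|$, one should read the comprehension hypothesis as relativized to $|\Lambda|$ (as the paper evidently intends); with that reading your proof goes through verbatim.
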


As an easy corollary to this lemma we see that in \AcaNaught we can apply transfinite induction for arithmetic formulas and we shall use this time and again in the remainder of this paper. 

{
We finish this section some remarks on variations of $\atr$.
To start, we can define Parameter-free Arithmetic Transfinite Recursion ${\rm ATR}^-_0$ as:
\begin{center}
\begin{tabular}{lll}
${\rm ATR}^-_0$:&\Robinson + $\tt Ind$+$\forall \Lambda\ \Big( {\tt wo}({\Lambda})\rightarrow \exists Y\, {\tt TR}^{\Lambda}(\varphi,Y)\Big)$& for $\varphi\in \Pi^0_\omega (\Lambda,Y)$.
\end{tabular}
\end{center}
So, the only difference between $\atr$ and ${\rm ATR}^-_0$ is that in $\atr$ we allow free set parameters in $\varphi$ other than $Y$ and $\Gamma$ while in ${\rm ATR}^-_0$ we do not. We thank Jeremy Avigad and Michael Rathjen for pointing out the following observation.

\begin{theorem}
${\rm ATR}^-_0 \vdash \atr$.
\end{theorem}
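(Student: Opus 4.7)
The plan is to absorb all extra set parameters of the recursion formula into the well-order $\Lambda$ itself, since $\Lambda$ is already an allowed parameter in the parameter-free version.

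Suppose we want to establish $\forall \Lambda\, ({\tt wo}(\Lambda) \to \exists Y\, {\tt TR}^\Lambda(\varphi, Y))$ for a formula $\varphi(x, X, \vec Z) \in {\bm\Pi}^0_\omega$ with arbitrary set parameters $\vec Z = Z_1, \dots, Z_k$. Fix a pairing scheme for the coding of well-orders that leaves unused "slots"; concretely, suppose a set $\Lambda$ codes $\langle |\Lambda|, <_\Lambda\rangle$ using tags $0$ and $1$, say $\langle 0, n\rangle \in \Lambda$ means $n \in |\Lambda|$ and $\langle 1, \langle x,y\rangle\rangle \in \Lambda$ means $x <_\Lambda y$. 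Given $\Lambda$ and $\vec Z$, define
\[
\Lambda^* \ := \ \Lambda \ \cup\ \{\langle i+1, n\rangle : 1 \le i \le k,\ n \in Z_i\},
\]
so that $\Lambda^*$ carries exactly the same underlying set and order as $\Lambda$ on the original tags, while storing each $Z_i$ in a previously unused tag. By construction $|\Lambda^*| = |\Lambda|$ and $<_{\Lambda^*} = \, <_\Lambda$ as binary relations, so ${\tt wo}(\Lambda^*) \leftrightarrow {\tt wo}(\Lambda)$ is provable, and each parameter can be recovered by the bounded formula $n \in Z_i \leftrightarrow \langle i+1, n\rangle \in \Lambda^*$.

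Next, let $\varphi^*(x, X, \Lambda^*)$ be the formula obtained from $\varphi(x, X, \vec Z)$ by replacing every atomic subformula ``$t \in Z_i$'' with ``$\langle i+1, t\rangle \in \Lambda^*$''. Then $\varphi^* \in {\bm\Pi}^0_\omega(\Lambda^*, X)$ with no free set parameters other than $\Lambda^*$ and $X$, so it is an admissible instance of ${\rm ATR}^-_0$. Applying ${\rm ATR}^-_0$ to $\Lambda^*$ and $\varphi^*$ (using that ${\tt wo}(\Lambda^*)$ follows from ${\tt wo}(\Lambda)$), we obtain $Y$ with ${\tt TR}^{\Lambda^*}(\varphi^*, Y)$. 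Since the order on $\Lambda^*$ literally is the order on $\Lambda$ and $\varphi^*$ is provably equivalent to $\varphi(x, X, \vec Z)$ once $\vec Z$ is extracted from $\Lambda^*$, this $Y$ also satisfies ${\tt TR}^\Lambda(\varphi, Y)$, as required.

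The main obstacle is purely bookkeeping: one needs to fix a coding convention for well-orders early enough so that $|\Lambda^*|$, $<_{\Lambda^*}$, and the predicate ${\tt wo}$ all depend only on the portion of $\Lambda^*$ coming from $\Lambda$, and are unaffected by the appended $Z_i$-data. Any reasonable pairing convention (e.g.\ using disjoint nonzero tags for the various pieces of data in a coded tuple) suffices, and once fixed the verifications in each step are immediate in $\Robinson + {\tt Ind}$. No nontrivial proof-theoretic strength is used beyond the parameter-free transfinite recursion scheme itself.
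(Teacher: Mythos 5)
Your proof is correct relative to the paper's conventions, and it shares the paper's key idea---smuggle the extra set parameters into the well-order, the one set parameter that ${\rm ATR}^-_0$ permits in $\varphi$---but implements it by a genuinely different mechanism. The paper replaces $\Lambda$ by a \emph{new} well-order $\Lambda'$ whose field is essentially $|\Lambda|\times Y$ ordered lexicographically: after a WLOG $Y\neq\varnothing$ it fixes $n_0\in Y$, rewrites $\varphi$ so that $y\in Y$ is read off the field as $\exists x\,\langle x,y\rangle\in|\Lambda'|$, and then transfers ${\tt TR}^{\Lambda'}(\varphi',X)$ back to ${\tt TR}^\Lambda(\varphi,X;Y)$---a step the paper calls easy but which does require re-indexing, since the recursion along $\Lambda'$ runs through pair-indexed stages of a longer order. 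You instead leave the field and order \emph{literally} unchanged and park the parameters $Z_1,\dots,Z_k$ in unused projection slots of the code of $\Lambda$; this eliminates the nonemptiness case split, the change of order type, and the transfer argument altogether, since ${\tt TR}^{\Lambda^*}(\varphi^*,Y)$ is then provably identical to ${\tt TR}^\Lambda(\varphi,Y;\vec Z)$. The price is sensitivity to the coding convention: your argument needs the decoding of $\langle|\Lambda|,<_\Lambda\rangle$ from the set $\Lambda$ to be junk-insensitive. This is consistent with this paper, which decodes tuples by projections throughout (e.g.\ $x\in X_\xi$ abbreviates $\langle\xi,x\rangle\in X$, tuples of sets are $\{\langle i,x\rangle : x\in A_i\}$, and $\modelpar{n}=\{i : \langle n,i\rangle\in\model\}$), and since $\varphi\in\Pi^0_\omega(\Lambda,Y)$ may contain arbitrary atomic membership statements about $\Lambda$, your instance $\varphi^*$ is an admissible instance of the parameter-free scheme. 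Do note, however, that under a tight pair coding such as Simpson's $A\oplus B$ (even/odd interleaving), every element of the code is meaningful and your padding trick is unavailable, whereas the paper's field-based encoding still goes through; so the paper's extra bookkeeping buys robustness under change of coding, while your version buys brevity and exactness given the paper's actual conventions.
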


\begin{proof}
Reason in $\atr^-$. We fix some $\Lambda$ and assume ${\tt wo}(\Lambda)$. Now we wish to prove transfinite induction for some $\varphi(x,z,X,Y) \in \Pi^0_\omega(X,Y)$; as always, we may restrict ourselves to the case with just one additional set variable $Y$. Let $Y$ be arbitrary. Without loss of generality we may assume $Y\neq \varnothing$ so that we can fix some $n_0\in Y$. We now define $\Lambda' := \la |\Lambda'|, \prec' \ra $ with $|\Lambda'| := \{ \la x, y \ra \mid x\in |\Lambda| \wedge y\in Y \}$ and $\prec'$ on $|\Lambda'|^2$ as 
\[
\la x,y \ra \prec' \la x',y' \ra \ \ :\Leftrightarrow \ \ (x\prec x') \vee (x=x' \wedge y<y').
\]
Clearly, $\prec'$ defines a well-order if $\prec$ does. We now define $\varphi'$ to be obtained from $\varphi$ as follows: each occurrence of $y\in Y$ is replaced by $\exists x\ \la x,y\ra \in |\Lambda'|$; each occurrence of $x\prec y$ is replaced by $\la x,n_0\ra \prec \la y, n_0\ra$; and each occurrence of $x\in |\Lambda|$ is replaced by $\la x, n_0\ra \in |\Lambda'|$. It is easy to see that ${\tt TR}^{\Lambda'}(\varphi',X) \to {\tt TR}^\Lambda(\varphi,X;Y)$. Since $Y$ was arbitrary our result follows.
\end{proof}
From private correspondence with Michael Rathjen we learned that one can also consider a version of $\atr$ where one restricts the well-orders to ones which are definable via an arithmetical formula (allowing second-order parameters) without losing proof-strength. However, if one makes this restriction on ${\rm ATR}^-_0$ with moreover prohibiting set-parameters to occur in the arithmetical definition of the well-order, then a genuinely weaker system is obtained. One can even consider versions where no further first-order parameters are allowed as was done for bar induction in \cite{Rathjen:1991:RoleOfParametersInBarRuleAndInduction}. 
}


\section{Nested $\omega$-rules for oracle provability}\label{section:NestedOracleOmegaRules}

In this section, we will briefly discuss how nested $\omega$-rules for oracle provability can be formalized and prove certain basic properties of the formalization.

\subsection{Formalizing nested $\omega$-rules for oracle provability} 

We will use $\prov{\lambda}\varphi$ to denote our representation of  ``$\varphi$ is provable in $T$ using one application of an $\omega$-rule of depth $\lambda$ (according to ${<_\Lambda}$)''.
The desired recursion for such a sequence of provability operators is given by the following equivalence:
\begin{equation}\label{equation:recursiveDefinitionProvability}
\prov\lambda\varphi \ \leftrightarrow 
\ \Big( \Box_T \varphi \vee \exists \, \psi\, \exists\, \xi{{<_\Lambda}} \lambda \ \big(\forall n \ \prov\xi\psi(\overline{n}) \ \wedge \ \Box_T (\forall x \psi (x) \to \varphi) \big) \Big).
\end{equation}

\noindent In \cite{FernandezJoosten:2013:OmegaRuleInterpretationGLP}, two of the authors formalize a notion $\prov\lambda\varphi$ in second-order number theory so that under certain conditions it provably satisfies the recursion from \eqref{equation:recursiveDefinitionProvability}.  

In the current paper we will need to slightly modify \eqref{equation:recursiveDefinitionProvability} to oracle provability. As such we will denote by $\prov{\lambda|X}\varphi$ the notion of iterated $\omega$-rule oracle-provability as defined by the following recursion.

\begin{equation}\label{equation:recursiveDefinitionOracleProvability}
\prov{\lambda|X}\varphi \ \leftrightarrow 
\ \Big( \Box_{T|X} \varphi \vee \exists \, \psi\, \exists\, \xi{{<_\Lambda}} \lambda \ \big(\forall n \ \prov{\xi|X}\psi(\overline{n}) \ \wedge \ \Box_{T|X} (\forall x \psi (x) \to \varphi) \big) \Big)
\end{equation}

The formalization of $\prov{\lambda|X}\varphi$ closely follows the presentation of \cite{FernandezJoosten:2013:OmegaRuleInterpretationGLP}. For the sake of clarity and keeping the paper self-contained we shall sketch here how such a formalization would proceed and refer for \cite{FernandezJoosten:2013:OmegaRuleInterpretationGLP} for further details. 

As a first step in such a formalization, we will use a set $P$ as a ``provability operator for the oracle $X$''. Its elements are codes of pairs $\langle\lambda,\varphi\rangle$, with $\lambda$ a code for an ordinal and $\varphi$ a code for a formula. We use $[\lambda]_P \varphi$ to denote $\langle \lambda,{ \varphi}\rangle\in P$.\david{I removed the notation $[\lambda|X]_P \varphi$ because it is not used consistently later.}

The idea is that we want to consider those sets $P$ of pairs $\langle\lambda,\varphi\rangle$ so that \eqref{equation:recursiveDefinitionOracleProvability} holds whenever we define $\prov{\lambda|X}\varphi := \langle\lambda,\varphi\rangle \in P$. Of course, we will use second-order logic to impose the necessary conditions on the set $P$. Whenever $P$ satisfies \eqref{equation:recursiveDefinitionOracleProvability} we will write $\Provfor_{T|X}(P)$ and say that ``$P$ is an iterated oracle provability class''. The following definition is a minor modification from \cite{FernandezJoosten:2013:OmegaRuleInterpretationGLP}.

%
%
%

\begin{definition}\label{definition:OracleRuleWithSecondOrderVariable}
Let $\Lambda$ denote a second-order variable that will be used to denote a well-order. Define $\oraclerule d \xi \lambda \psi \varphi P$ to be the formula
\[ 
\phantom{aaaaa}\xi{<_\Lambda}\lambda\, \wedge \, \forall n\, \provx {\xi} P{\psi(\dot n)} \, \wedge \,  \provfor_{T|X}(d,\forall x \psi(x)\to \varphi),
\]
$\Oracleproof(c,\lambda,\varphi, P)$ to be 
\[
\exists d\, \exists\xi\, \exists\psi \ \Big(c=\langle d,\xi,\psi\rangle \, \wedge \, \big[ \provfor_{T|X} (d,\varphi)\, \vee \,  \oraclerule d \xi \lambda \psi \varphi P\big]\Big)
\]
and let $\Provfor_{T|X}(P)$ be the formula
\[
\forall \lambda\in|\Lambda| \ \forall \varphi \ \Big( \provx\lambda P\varphi \leftrightarrow \exists c\, \Oracleproof (c,\lambda,\varphi, P)\Big).
\]
Then, $\provor \lambda X T\varphi$ is the $\Pi^1_1$ formula $\forall P(\Provfor_{T|X}(P)\to \provx \lambda P{\varphi})$, and $\consor \lambda XT\varphi$ is defined as $\neg\provor \lambda X T{\neg\varphi}$.
\end{definition}

\noindent Note that the formulas $\provx  {\lambda} P\varphi$ and $\consx {\lambda} P\varphi$ are  independent of $T$, ${\Lambda}$ and $X$ and are merely of complexity ${ \Delta}^0_0(P)$.
Note also that for c.e.~theories $T$ we have that $\Provfor_{T|X}( P)$ is a ${\Pi}^0_3(\Lambda, X, P)$ formula.

We would like to stress that our notion of oracle provability is rather weak in a sense: a formula is provable if it is a member of any iterated provability class corresponding to that oracle, but it can be the case that there simply are no such iterated provability classes. Consequently, and in the same sense, consistency statements are rather strong: from oracle consistency we may conclude the existence of an iterated provability class corresponding to that oracle. Let us state this explicitly in a lemma whose proof merely follows from the definition.

\begin{lemma}\label{theorem:consistencyImpliesIPC}
If $T$ is any representable theory, then
\[\eca\vdash \forall X\ \big(\consor \lambda X T\top \ \to \ \exists Y\ \Provfor_{T|X}(Y)\big).\]
\end{lemma}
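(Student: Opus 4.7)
The plan is to derive the lemma essentially from the definition of $\consor\lambda X T\top$ using only classical predicate logic with second-order quantifiers, which is part of the base logic underlying $\eca$. Unfolding Definition \ref{definition:OracleRuleWithSecondOrderVariable}, we have
\[
\consor\lambda X T\top \ :=\ \neg\,\provor\lambda X T\,\neg\top \ \equiv \ \neg\,\provor\lambda X T\,\bot,
\]
and $\provor\lambda X T\bot$ is by definition the $\Pi^1_1$ formula
\[
\forall P\,\bigl(\Provfor_{T|X}(P)\ \to\ \provx\lambda P\bot\bigr).
\]

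First I would reason in $\eca$, fix an arbitrary set $X$, and assume $\consor\lambda X T\top$. Applying the classical De Morgan law for the second-order quantifier $\forall P$ together with the equivalence $\neg(A\to B)\leftrightarrow A\wedge\neg B$, the assumption transforms into
\[
\exists P\,\bigl(\Provfor_{T|X}(P)\ \wedge\ \neg\,\provx\lambda P\bot\bigr).
\]
Since $A\wedge\neg B$ implies $A$, this immediately yields $\exists P\,\Provfor_{T|X}(P)$, and taking $Y:=P$ gives the desired $\exists Y\,\Provfor_{T|X}(Y)$. Generalizing on $X$ completes the proof.

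There is no substantive obstacle: the lemma is almost a tautology once the definitions are unpacked, and this is precisely the asymmetry the paragraph immediately preceding the lemma is emphasizing—oracle provability is a weak $\Pi^1_1$ notion (requiring membership in \emph{every} iterated provability class, which may vacuously hold if none exist), while its dual oracle consistency is correspondingly strong and forces the existence of at least one such class. The representability hypothesis on $T$ is needed only to ensure that $\provfor_{T|X}$, and hence $\Provfor_{T|X}(P)$, is a well-defined formula of the language; it plays no further role. In particular, no use of comprehension beyond $\bm\Delta^0_0$ is required, so the argument goes through in $\eca$ (indeed in pure second-order predicate logic).
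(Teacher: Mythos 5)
Your proof is correct and takes essentially the same route as the paper, which gives no written argument beyond remarking that the lemma ``merely follows from the definition'': unfolding $\consor{\lambda}{X}{T}\top$ as the negation of the $\Pi^1_1$ formula $\forall P\,\bigl(\Provfor_{T|X}(P)\to\provx{\lambda}{P}\,\neg\top\bigr)$ and applying classical second-order quantifier reasoning is exactly what is intended. Your identification of $\neg\top$ with $\bot$ is an inessential simplification, since the argument discards the consequent anyway.
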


However, if we are mainly interested in consistency strength, it is not such a bad thing to work under the assumption that iterated provability classes exists for all oracles.

\begin{lemma}\label{theorem:AddingExistenceIPCsRemainsEquiconsistent}
Let $T$ be a representable theory extending \eca, then $T$ and $T + \forall X\, \exists Y\ \Provfor_{T|X}(Y)$ are equiconsistent. That is,\[
T \equiv_{\Pi^0_1} T + \forall X\, \exists Y\ \Provfor_{T|X}(Y).
\]
\end{lemma}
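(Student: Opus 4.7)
The plan is to prove the stronger $\Pi^0_1$-conservativity statement, from which equiconsistency follows. The right-to-left inclusion is trivial. For the other direction, I argue contrapositively: suppose $\pi\in \Pi^0_1$ with $T\nvdash \pi$, and fix a countable model $\model = (M,S) \models T + \neg\pi$. It suffices to construct a second-order expansion $\model^* = (M, S^*)$ with $S\subseteq S^*$ such that $\model^* \models T + \forall X\,\exists Y\,\Provfor_{T|X}(Y)$; since the first-order part is unchanged and $\pi$ is first-order, $\neg\pi$ is preserved in $\model^*$, contradicting $T + \forall X\,\exists Y\,\Provfor_{T|X}(Y) \vdash \pi$.

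The expansion $S^*$ is built in stages $S = S_0 \subseteq S_1 \subseteq \cdots$ with $S^* = \bigcup_n S_n$. Via a standard back-and-forth enumeration, at each stage I select a pair $(X,\Lambda) \in S_n \times S_n$ with $\model_n\models {\tt wo}(\Lambda)$ that has not yet been handled, adjoin a witness $Y$ for $\Provfor_{T|X}(Y)$ (relative to $\Lambda$), and then close under the set-existence principles of $T$ (for \eca, $\Delta^0_0$-comprehension in the expanded parameters; for stronger base theories, the corresponding comprehension schema). The existence of the witness $Y$ is the crux of the argument. I observe that $\Provfor_{T|X}(P)$ asserts that $P$ is a fixed point of the operator
\[
F_X(P) = \{\langle \lambda,\varphi\rangle : \exists c\ \Oracleproof(c,\lambda,\varphi,P)\}
\]
on $\mathcal P(M^2)$, and that $F_X$ is monotone because $P$ appears only positively in the body of $\Oracleproof$ (namely through the clause $\forall n\,\provx{\xi}{P}{\psi(\dot n)}$). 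By the Knaster--Tarski theorem applied externally, $F_X$ admits a least fixed point $Y^*$, which by construction satisfies $\Provfor_{T|X}(Y^*)$.

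The main obstacle is verifying that each expansion $\model_{n+1}$ remains a model of $T$. First-order axioms are preserved automatically since $M$ does not change, and comprehension is preserved by closing $S_{n+1}$ appropriately. The delicate point is the induction axiom $\tt Ind$, which ranges over all sets: I have to check that no newly adjoined $Y^*$ contains $\overline 0$ and is closed under successor while being a proper subset of $M$. Since $Y^*$ consists of codes of pairs $\langle \lambda,\varphi\rangle$ satisfying a specific recursive condition, a routine inspection (or a harmless adjustment of the coding) shows the antecedent of $\tt Ind$ fails for $Y^*$, so $\tt Ind$ holds vacuously on the new sets. For base theories stronger than \eca with induction schemes indexed by formulas, an analogous verification applies. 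Once all stages are completed, the back-and-forth guarantees that every $(X,\Lambda) \in S^* \times S^*$ with $\model^* \models {\tt wo}(\Lambda)$ is eventually handled, so $\model^* \models T + \forall X\,\exists Y\,\Provfor_{T|X}(Y)$, completing the argument.
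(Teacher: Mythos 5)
You take a genuinely different and much heavier route than the paper, and it contains a real gap: the preservation of $T$ under your expansion is exactly the hard point, and your treatment of it is wrong in direction. The induction axiom ${\tt Ind}$ fails in $(M,S^*)$ not when a new set falsifies the \emph{antecedent}, but when some set in $S^*$ \emph{satisfies} it while being a proper cut of a nonstandard $M$; so "the antecedent fails for $Y^*$, hence ${\tt Ind}$ holds vacuously" misidentifies the failure mode. Worse, you must verify ${\tt Ind}$ not just for $Y^*$ but for everything you adjoin when closing under $\Delta^0_0$-comprehension with $Y^*$ as a parameter, and the externally constructed Knaster--Tarski fixed point gives you no control over which $\Delta^0_0(Y^*)$-definable subsets of a nonstandard $M$ turn out to be cuts. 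Concretely, already for $\Lambda={\bm 1}$ the $0$-section of any set satisfying $\Provfor_{T|X}$ is forced to be exactly $\{\varphi \mid \exists d\ \provfor_{T|X}(d,\varphi)\}$, a $\Sigma^0_1(X)$-defined set; your expansion therefore imposes induction for $\Delta^0_0$ formulas with this $\Sigma^0_1$ set as a parameter, which an arbitrary model of \eca (whose first-order part is merely \ea) is not guaranteed to support. Note also that if your argument worked it would prove \emph{full} first-order conservativity, far more than the stated $\Pi^0_1$-conservativity --- a red flag in itself. Two further problems: the lemma concerns an arbitrary representable $T$ extending \eca, which may contain genuinely second-order axioms (say $\Pi^1_1$ sentences with universal set quantifiers) that are destroyed by enlarging $S$, and your preservation checklist (first-order axioms, comprehension, ${\tt Ind}$) does not cover them; and the principle $\forall X\,\exists Y\ \Provfor_{T|X}(Y)$ carries no ${\tt wo}(\Lambda)$ guard, whereas your back-and-forth only handles pairs with $\model_n\models{\tt wo}(\Lambda)$ --- this last slip is repairable, since Knaster--Tarski needs no well-foundedness, but as written your final model is not shown to satisfy the statement.

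The paper avoids all of this with a three-line proof-theoretic sandwich: $T \subseteq T+\forall X\,\exists Y\ \Provfor_{T|X}(Y) \subseteq T+\Box_T\bot$, because under $\Box_T\bot$ every formula is $T|X$-provable, so the set of \emph{all} pairs $\langle\lambda,\varphi\rangle$, available by $\Delta^0_0$-comprehension, is an iterated provability class for any $\Lambda$ whatsoever; and $T+\Box_T\bot\equiv_{\Pi^0_1}T$, since from $T\vdash\Box_T\bot\to\pi$ with $\pi\in\Pi^0_1$, provable $\Sigma^0_1$-completeness gives $T\vdash\Box_T\pi\to\pi$, whence $T\vdash\pi$ by L\"ob's rule. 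This argument is uniform in $T$ (so arbitrary second-order axioms cause no trouble) and requires no model construction. Tellingly, your fixed-point idea becomes trivial exactly in models of $\Box_T\bot$, where the least fixed point is the definable all-pairs set --- that special case is, in essence, the paper's entire proof.
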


\begin{proof}
As in \cite{FernandezJoosten:2013:OmegaRuleInterpretationGLP} we remark that $T + \Box_T\bot \ \vdash \ T + \forall X\, \exists Y\ \Provfor_{T|X}(Y) \ \vdash \ T$ and that $T + \Box_T \bot \equiv_{\Pi^0_1} T$. To see the latter, suppose $T \vdash \Box_T\bot \to \pi$ of some $\pi \in \Pi^0_1$, then by provable $\Sigma^0_1$-completeness also $T\vdash \Box_T \pi \to \pi$ whence by L\"ob's rule, $T\vdash \pi$.
\end{proof}

\subsection{Normality and completeness for oracle provability }


Many results established in \cite{FernandezJoosten:2013:OmegaRuleInterpretationGLP} for $\prov \lambda \varphi$ simply carry over to $\prov {\lambda |X} \varphi$. For example, via an easy transfinite recursion we see in \AcaNaught that iterated provability classes are unique, given a well-order $\Lambda$ and a c.e.~base theory $T$. 

\begin{lemma}\label{theorem:AcaNaughtProvesUniquenessIPCGivenWO}
For $T$ a c.e.~theory, we have that\[
\AcaNaught +{\tt wo}(\Lambda) \vdash \exists_{\leq 1}P\ \Provfor_{T|X}( P),
\]
where $\exists_{\leq 1}P\, \varphi(X)$ is an abbreviation of $\forall P \ \forall P' \ \big(\varphi(P)\wedge\varphi(P')\to P\equiv P'\big)$.
\end{lemma}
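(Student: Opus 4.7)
The natural strategy is transfinite induction along $\Lambda$ applied to the statement that $P$ and $P'$ agree on each level. Concretely, I would define
\[
\varphi(\lambda) \; := \; \forall \psi \ \big(\, \provx \lambda P \psi \; \leftrightarrow \; \provx \lambda {P'} \psi \,\big).
\]
This is an arithmetic formula with $P, P', \Lambda, X$ as set parameters, so in $\AcaNaught$ we have comprehension available for $\neg\varphi$. Combined with ${\tt wo}(\Lambda)$, Lemma \ref{theorem:WellOrderingWithComprehensionImpliesTI} yields ${\transin}^\Lambda(\varphi)$, so it suffices to show the inductive step
\[
\forall \, \xi <_\Lambda \lambda \ \varphi(\xi) \ \to \ \varphi(\lambda).
\]

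For this step, I would unfold the recursive characterization that each of $P$ and $P'$ satisfies by virtue of $\Provfor_{T|X}$. Fix $\lambda$ and assume the induction hypothesis. Given any $\varphi$, both $\provx \lambda P \varphi$ and $\provx \lambda {P'} \varphi$ are, by Definition \ref{definition:OracleRuleWithSecondOrderVariable}, equivalent to the existence of a code $c = \langle d,\xi,\psi \rangle$ such that either $\provfor_{T|X}(d,\varphi)$ holds, or else $\xi <_\Lambda \lambda$, $\provfor_{T|X}(d,\forall x\psi(x)\to\varphi)$, and $\forall n \ \provx \xi {\bullet} {\psi(\dot n)}$, where $\bullet$ is $P$ or $P'$ respectively. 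The first disjunct and the axiom-provability condition do not mention $P$ or $P'$ at all; the only dependence on the provability class is in the clause $\forall n\, \provx \xi {\bullet} {\psi(\dot n)}$, and there $\xi <_\Lambda \lambda$, so by the induction hypothesis the two clauses are equivalent. Hence the full right-hand sides agree, and therefore $\provx \lambda P \varphi \leftrightarrow \provx \lambda {P'} \varphi$.

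Once $\forall \lambda \in |\Lambda|\, \varphi(\lambda)$ is established, membership in $P$ and in $P'$ coincide for every pair $\langle \lambda,\varphi\rangle$ with $\lambda \in |\Lambda|$; together with the observation that $\Provfor_{T|X}$ constrains $P$ only on such pairs (so we may assume without loss of generality that both $P$ and $P'$ are concentrated on $|\Lambda|$, or simply restrict attention to that), we conclude $P = P'$.

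The only delicate point is ensuring that $\varphi(\lambda)$ lies within the class of formulas for which transfinite induction is available. Since $\provx \lambda P \psi$ is $\Delta^0_0(P)$ and the quantifier over $\psi$ is first-order, $\varphi(\lambda)$ is $\Pi^0_1$ in the parameters $P, P'$, hence arithmetic; thus $\AcaNaught$ supplies the required comprehension instance, and Lemma \ref{theorem:WellOrderingWithComprehensionImpliesTI} delivers the induction. No further subtlety arises from the oracle $X$, since it plays no role beyond being a fixed parameter inside $\provfor_{T|X}$.
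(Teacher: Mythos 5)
Your proof is correct and is essentially the paper's own argument: the paper gives no details beyond observing that uniqueness follows ``via an easy transfinite recursion'' in $\AcaNaught$ carried over from the non-oracle setting, and your transfinite induction on $\forall \psi\,(\provx{\lambda}{P}{\psi} \leftrightarrow \provx{\lambda}{P'}{\psi})$ --- using arithmetic comprehension for the negation, Lemma~\ref{theorem:WellOrderingWithComprehensionImpliesTI}, and the observation that in $\Oracleproof$ the only dependence on the provability class occurs at levels $\xi <_\Lambda \lambda$ --- is exactly that recursion spelled out. Your closing remark that $\Provfor_{T|X}(P)$ only constrains pairs $\langle\lambda,\varphi\rangle$ with $\lambda\in|\Lambda|$, so that $P\equiv P'$ must be read modulo such pairs (or a normalization convention), is in fact more careful than the paper, which glosses over this point entirely.
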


As an immediate consequence of the definition, we trivially get monotonicity:

\begin{lemma}\label{theorem:oracleProvabilityIsMonoton} It is provable in $\eca$ that if $\Lambda$ is a well-order and $\lambda' <_\Lambda \lambda$ then $\prov{\lambda'|X}\varphi \to \prov{\lambda|X}\varphi.$
\end{lemma}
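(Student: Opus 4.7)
The statement unpacks to: reasoning in $\eca$, fix $\Lambda$ with ${\tt wo}(\Lambda)$ (or just assume the hypothesis), ordinals $\lambda'<_\Lambda\lambda$, a set $X$, and a formula $\varphi$, and assume $\prov{\lambda'|X}\varphi$. We want $\prov{\lambda|X}\varphi$, i.e.\ $\forall P\,\big(\Provfor_{T|X}(P)\to\provx{\lambda}P\varphi\big)$. So fix an arbitrary $P$ with $\Provfor_{T|X}(P)$; then by assumption $\provx{\lambda'}P\varphi$. The plan is to produce a witnessing $c$ for $\Oracleproof(c,\lambda,\varphi,P)$ directly from this.

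The key idea is to apply the $\omega$-rule clause vacuously. Choose a variable $x$ that does not occur free in $\varphi$ and set $\psi := \varphi$ (viewed as a formula in which $x$ is, trivially, ``free''). Then $\psi(\dot n)=\varphi$ for every $n$, so the hypothesis $\provx{\lambda'}P\varphi$ immediately yields $\forall n\,\provx{\lambda'}P{\psi(\dot n)}$. Moreover, $\forall x\,\psi(x)\to\varphi$ is just $\forall x\,\varphi\to\varphi$, which is a theorem of predicate logic; hence there is some $d$ such that $\provfor_{T|X}(d,\forall x\,\psi(x)\to\varphi)$ (formally, this is verified by exhibiting a concrete proof code, using that $T|X$ contains predicate logic). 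Since $\lambda'<_\Lambda\lambda$ by hypothesis, the three conjuncts of $\oraclerule d {\lambda'} \lambda \psi \varphi P$ are satisfied.

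Setting $c:=\langle d,\lambda',\psi\rangle$, the second disjunct of $\Oracleproof(c,\lambda,\varphi,P)$ holds, so $\exists c\,\Oracleproof(c,\lambda,\varphi,P)$. Unfolding $\Provfor_{T|X}(P)$ at $\lambda$ and $\varphi$ gives $\provx{\lambda}P\varphi$. Since $P$ was arbitrary, $\prov{\lambda|X}\varphi$ follows, completing the argument.

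\textbf{Main obstacle.} There is really no mathematical obstacle; the only point deserving care is purely syntactic bookkeeping: making sure the Gödel-coded substitution operator $\psi{[x/\overline n]}$ behaves correctly when $x$ is not free in $\psi$, so that $\psi(\dot n)=\varphi$ literally as codes, and that the existence of a code $d$ proving the logical tautology $\forall x\,\varphi\to\varphi$ in $T|X$ is verifiable in $\eca$. Both are standard features of any reasonable elementary formalization of syntax and provability, already implicit in the assumption that $T$ is representable.
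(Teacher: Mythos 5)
Your argument is correct, but it follows a genuinely different route from the paper, which in fact gives no written proof at all: Lemma \ref{theorem:oracleProvabilityIsMonoton} is prefaced by ``as an immediate consequence of the definition, we trivially get monotonicity'', and the immediate argument being alluded to is \emph{witness reuse}. Namely, the parameter $\lambda$ occurs in $\Oracleproof(c,\lambda,\varphi,P)$ only in the conjunct $\xi<_\Lambda\lambda$ of the rule clause; so, unfolding $\Provfor_{T|X}(P)$ at $\lambda'$ to obtain a witness $c$ with $\Oracleproof(c,\lambda',\varphi,P)$, the very same $c$ witnesses $\Oracleproof(c,\lambda,\varphi,P)$, using transitivity of $<_\Lambda$ (part of ${\tt linear}(\Lambda)$) to pass from $\xi<_\Lambda\lambda'<_\Lambda\lambda$ to $\xi<_\Lambda\lambda$, and folding the defining equivalence back at $\lambda$ finishes. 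You instead manufacture a fresh witness by a vacuous $\omega$-rule application with $\psi:=\varphi$ and a dummy variable. Both work in $\eca$. What each buys: your version never inspects the witness at $\lambda'$ and uses only the literal hypothesis $\lambda'<_\Lambda\lambda$ (not even transitivity, so the well-order assumption is idle), but it is hostage to two formalization details you rightly flag --- that coded substitution acts as the identity on a variable not occurring free, so that $\psi(\dot n)=\varphi$ literally as codes (note the paper declares the substitution pseudo-term ``unspecified'', possibly defaulting to $\ulcorner\bot\urcorner$, outside its intended domain, so this convention must genuinely be checked), and that $\eca$ verifies, uniformly in the code of $\varphi$, the existence of a proof code $d$ for $\forall x\,\varphi\to\varphi$ in $T|X$. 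Beware also that the tempting variant $\psi:=\varphi\wedge x=x$ would \emph{not} work over $\eca$, since $\provx{\lambda'}P{(\varphi\wedge\overline n=\overline n)}$ does not follow from $\provx{\lambda'}P{\varphi}$ for an arbitrary $P$; distributivity (Lemma \ref{kaxiom}) is only established in $\aca$. The witness-reuse argument is insensitive to all such syntactic conventions, which is presumably why the paper can dismiss the lemma as trivial.
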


We also see that we have distributivity for our provability predicates $\provor \lambda XT$.
\begin{lemma}[$\aca$]\label{kaxiom} Given a representable theory $T$, a well-order $\Lambda$, $\lambda < \Lambda$ and formulas $\varphi_1,\varphi_2$,
\[\provor \lambda XT(\varphi_1\to\varphi_2)\to(\provor \lambda XT\varphi_1 \to \provor \lambda XT\varphi_2).
\]
\end{lemma}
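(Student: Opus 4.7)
The plan is to fix an arbitrary iterated provability class $P$ satisfying $\Provfor_{T|X}(P)$ and prove, by transfinite induction on $\lambda{<_\Lambda}\Lambda$, the statement
\[
A(\lambda) \ := \ \forall\varphi_1\forall\varphi_2 \ \Big([\lambda]_P(\varphi_1\to\varphi_2)\wedge [\lambda]_P\varphi_1 \to [\lambda]_P\varphi_2\Big).
\]
Because $[\lambda]_P\varphi$ is $\Delta^0_0(P)$, the formula $A(\lambda)$ is arithmetic (with $P,\Lambda,X$ as parameters), so by Lemma~\ref{theorem:WellOrderingWithComprehensionImpliesTI} the transfinite induction $\transin^\Lambda(A)$ is available in $\aca$. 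Once this is shown, the lemma follows by unfolding the definition of $\provor\lambda X T$: from $\provor\lambda X T(\varphi_1\to\varphi_2)$ and $\provor\lambda X T\varphi_1$ we get, for every such $P$, that $[\lambda]_P(\varphi_1\to\varphi_2)$ and $[\lambda]_P\varphi_1$, hence $[\lambda]_P\varphi_2$, whence $\provor\lambda X T\varphi_2$.

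To carry out the inductive step, fix $\lambda$ and assume $A(\xi)$ for all $\xi{<_\Lambda}\lambda$. Unfolding the recursion given by $\Provfor_{T|X}(P)$, each of $[\lambda]_P(\varphi_1\to\varphi_2)$ and $[\lambda]_P\varphi_1$ is witnessed either by $\Box_{T|X}$ directly or by an application of the $\omega$-rule at some smaller ordinal. This gives four cases. In the three cases where at least one of the two hypotheses is witnessed by $\Box_{T|X}$, the conclusion is immediate from the normality of $\Box_{T|X}$: for instance, if $\Box_{T|X}(\varphi_1\to\varphi_2)$ and $[\lambda]_P\varphi_1$ is witnessed by $\xi{<_\Lambda}\lambda$, $\psi$ with $\forall n\,[\xi]_P\psi(\dot n)$ and $\Box_{T|X}(\forall x\,\psi(x)\to\varphi_1)$, then $\Box_{T|X}(\forall x\,\psi(x)\to\varphi_2)$ and the same witnesses give $[\lambda]_P\varphi_2$.

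The main obstacle is the fourth case, where both $[\lambda]_P(\varphi_1\to\varphi_2)$ and $[\lambda]_P\varphi_1$ come from the $\omega$-rule, say via $\xi_1,\psi_1$ and $\xi_2,\psi_2$ respectively. The idea is to merge them into a single application. Let $\xi=\max(\xi_1,\xi_2){<_\Lambda}\lambda$; by Lemma~\ref{theorem:oracleProvabilityIsMonoton} we have $\forall n\,[\xi]_P\psi_1(\dot n)$ and $\forall n\,[\xi]_P\psi_2(\dot n)$. Let $\psi(x)$ be (the code of) $\psi_1(x)\wedge\psi_2(x)$. Using the induction hypothesis $A(\xi)$ together with the trivially $\Box_{T|X}$-provable (and hence $[\xi]_P$-true) propositional tautologies, we derive $\forall n\,[\xi]_P\psi(\dot n)$ by two applications of modus ponens inside $[\xi]_P$. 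On the other hand, from $\Box_{T|X}(\forall x\,\psi_1(x)\to(\varphi_1\to\varphi_2))$ and $\Box_{T|X}(\forall x\,\psi_2(x)\to\varphi_1)$ we get $\Box_{T|X}(\forall x\,\psi(x)\to\varphi_2)$ by ordinary first-order reasoning under $\Box_{T|X}$. Thus $\xi,\psi$ witness $[\lambda]_P\varphi_2$ via the $\omega$-rule clause, completing the induction and the proof.
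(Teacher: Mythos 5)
Your proof is correct and takes essentially the approach the paper intends: the paper's own ``proof'' merely cites \cite{FernandezJoosten:2013:OmegaRuleInterpretationGLP} and observes that the oracle changes nothing, and the argument there is exactly your transfinite induction on $\lambda$ (legitimately available in $\aca$ via Lemma~\ref{theorem:WellOrderingWithComprehensionImpliesTI}, since $A$ is arithmetic in the parameter $P$), with the four-case analysis and the merging of two $\omega$-rule applications through the conjunction $\psi_1\wedge\psi_2$. One cosmetic point: in the merge case you cite Lemma~\ref{theorem:oracleProvabilityIsMonoton}, which is stated for the universally quantified predicate $\provor{\lambda}{X}{T}$, whereas what you need is monotonicity for the fixed class $P$ --- but that version is immediate from the definition of $\Oracleproof(c,\lambda,\varphi,P)$, since the same witness $\langle d,\xi,\psi\rangle$ serves at any $<_\Lambda$-larger level.
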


\proof
See \cite{FernandezJoosten:2013:OmegaRuleInterpretationGLP}. Note that the addition of an oracle does not affect the proof.
\endproof

\begin{corollary}
Within ${\aca} + {\tt wo}(\Lambda)$ the provability predicates $\provor \lambda XT$ are normal for each $\lambda<\Lambda$, provided $\ea\subseteq T$. 
\end{corollary}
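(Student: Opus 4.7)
The plan is to verify the three clauses in the definition of normal provability predicate directly from what has already been established in the excerpt. Fix a well-order $\Lambda$ and $\lambda <\Lambda$, and work inside $\aca + {\tt wo}(\Lambda)$ assuming $\ea\subseteq T$.

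First, for the distribution axiom $\provor{\lambda}{X}{T}(\varphi \to \psi) \to (\provor{\lambda}{X}{T}\varphi \to \provor{\lambda}{X}{T}\psi)$: this is exactly the content of Lemma \ref{kaxiom}, which was stated in the current form (with the oracle) precisely so that we could invoke it here. So nothing new is required.

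Second, for necessitation: if $\ea \vdash \varphi$, then since $T \supseteq \ea$ we have $T \vdash \varphi$ and hence $T|X \vdash \varphi$, so $\Box_{T|X}\varphi$ holds (provably, by the standard formalization of provable $\Sigma^0_1$-completeness available in $\ea$, and a fortiori in $\aca$). Using the first disjunct in the recursive definition \eqref{equation:recursiveDefinitionOracleProvability}, any iterated provability class $P$ satisfies $\provx{0}{P}{\varphi}$; unwinding Definition \ref{definition:OracleRuleWithSecondOrderVariable}, this yields $\provor{0}{X}{T}\varphi$. By Lemma \ref{theorem:oracleProvabilityIsMonoton} (monotonicity), since $0 \leq_\Lambda \lambda$, we get $\provor{\lambda}{X}{T}\varphi$.

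Third, for tautologies: every propositional tautology is provable in predicate logic and hence in $\ea$, so by the necessitation step just argued we obtain $\provor{\lambda}{X}{T}\varphi$ for any tautology $\varphi$.

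There is essentially no obstacle here; the corollary is a packaging of Lemma \ref{kaxiom} together with monotonicity and the base clause of the recursion \eqref{equation:recursiveDefinitionOracleProvability}. The only minor point to be careful about is that necessitation requires provable $\Sigma^0_1$-completeness to transfer $T|X \vdash \varphi$ into the formal statement $\Box_{T|X}\varphi$; this is available because $\provfor_{T|X}$ is $\Delta^0_0(X)$ and we are working over $\aca$, which easily proves $\Sigma^0_1$-completeness for oracle provability with parameter $X$.
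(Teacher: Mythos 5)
Your proposal is correct in substance and follows essentially the same route the paper intends: the paper gives no separate proof, treating the corollary as immediate from Lemma \ref{kaxiom} for the distribution axiom, with necessitation and provability of tautologies being routine from the base clause of the recursion \eqref{equation:recursiveDefinitionOracleProvability}. One step as written is faulty, though easily repaired: in the necessitation argument you derive $\provx{0}{P}{\varphi}$ and then invoke Lemma \ref{theorem:oracleProvabilityIsMonoton} ``since $0\leq_\Lambda\lambda$'', but $\Lambda$ is an arbitrary well-order whose field need not contain $0$, let alone have it as $\leq_\Lambda$-least element, so the monotonicity detour is both unjustified and unnecessary. The correct (and shorter) route is that the disjunct $\provfor_{T|X}(d,\varphi)$ in $\Oracleproof(c,\lambda,\varphi,P)$ is available at \emph{every} $\lambda\in|\Lambda|$, so a standard proof $d$ of $\varphi$ in $T$ yields $\provx{\lambda}{P}{\varphi}$ at the given $\lambda$ directly, for any $P$ with $\Provfor_{T|X}(P)$, hence $\provor{\lambda}{X}{T}\varphi$. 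Relatedly, no appeal to provable $\Sigma^0_1$-completeness is needed here: since $\ea\vdash\varphi$ gives a standard proof $d$ in $T$, the sentence $\provfor_{T|X}(\overline{d},\overline{\varphi})$ is a true $\Delta^0_0$ statement (the oracle axioms play no role in $d$) and is provable outright in the base theory.
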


We also can prove various completeness results for our provability predicates.

\begin{lemma}\label{LemmComplete}
If $\varphi\in\Sigma^0_{2m+1}(X,x)$ is arithmetic and $m\leq n$ then
\begin{equation}\label{EqLemmComplet}
\eca\vdash \forall X\, \forall x\ \Big(\varphi(X,x)\rightarrow [{\overline m}| X]^{\bm n}_T \, \varphi( \overline X, \dot x) \, \Big).
\end{equation}
\end{lemma}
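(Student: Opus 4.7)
The plan is to proceed by external induction on $m$, reasoning throughout inside $\eca$.

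For the base case ($m=0$), I would use that no element of $\bm n$ lies strictly below $\overline 0$, so the recursion \eqref{equation:recursiveDefinitionOracleProvability} collapses to $[\overline 0 | X]^{\bm n}_T\varphi \leftrightarrow \Box_{T|X}\varphi$. Thus the claim reduces to provable $\Sigma^0_1(X)$-completeness for the base oracle provability predicate $\Box_{T|X}$, which I would establish along classical lines: true atomic facts about $X$ are axioms of $T|X$ (via the predicate $\mathfrak O$), while $\Robinson$ proves every true $\Sigma^0_1$ sentence; the whole argument formalizes in $\eca$ since exponentiation is available.

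For the inductive step, I would decompose a formula $\varphi(X,x) \in \Sigma^0_{2(m+1)+1}$ as $\exists y\,\forall z\,\psi(X,x,y,z)$ with $\psi \in \Sigma^0_{2m+1}$. Reasoning in $\eca$ and assuming $\varphi(X,x)$, I would fix a witness $y_0$ with $\forall z\,\psi(X,x,y_0,z)$ and apply the inductive hypothesis to each instance $\psi(X,x,y_0,n)$ (legitimate because $m+1 \leq n$ implies $m \leq n$) to obtain
\[
\forall n\;[\overline m \,|\, X]^{\bm n}_T\,\psi(\overline X,\dot x,\dot y_0,\dot n).
\]
Invoking the $\omega$-rule clause of \eqref{equation:recursiveDefinitionOracleProvability} at $\xi = \overline m <_{\bm n} \overline{m+1}$, with auxiliary formula $\chi(z)=\psi(\overline X,\dot x,\dot y_0,z)$ and the trivial implication $\Box_{T|X}(\forall z\,\chi(z) \to \forall z\,\chi(z))$, I would conclude $[\overline{m+1}\,|\,X]^{\bm n}_T\,\forall z\,\psi(\overline X,\dot x,\dot y_0,z)$. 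Finally, since $T|X$ proves $\forall z\,\psi(\overline X,\dot x,\dot y_0,z) \to \exists y\forall z\,\psi(\overline X,\dot x,y,z)$ by predicate logic, a simple monotonicity-through-provable-implication step (which drops out directly from \eqref{equation:recursiveDefinitionOracleProvability} without invoking the full K-axiom of Lemma \ref{kaxiom}, hence is available already in $\eca$) upgrades this to $[\overline{m+1}\,|\,X]^{\bm n}_T\,\varphi(\overline X,\dot x)$.

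I expect the main obstacle to be the careful bookkeeping of the $\omega$-rule step: one must verify that the formalized $\Pi^1_1$ quantifier over iterated provability classes $P$ in Definition \ref{definition:OracleRuleWithSecondOrderVariable} unpacks correctly, so that witnessing $\xi$ and $\chi$ uniformly in $n$ actually yields the desired membership $\langle \overline{m+1},\varphi(\overline X,\dot x)\rangle\in P$ for every $P$ satisfying $\Provfor_{T|X}(P)$. A secondary delicate point is the \emph{uniform-in-$X$} character of the base case, which hinges on the precise form of the axiomatization ${\tt Axiom}_{T|X}$ introduced in Section \ref{section:SecondOrderTheoriesWithOracles}; one must inspect that the standard $\eca$-proof of $\Sigma^0_1$-completeness goes through when atomic $\mathfrak O$-facts are treated as axioms rather than as built-in truths.
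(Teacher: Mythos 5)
Your proof is correct and takes essentially the same approach as the paper: both arguments proceed by induction, instantiating existential quantifiers by true witnesses, absorbing each universal quantifier with a single application of the $\omega$-rule at the next ordinal level (with a proof of the trivial or existential-introduction implication as the side premise), and discharging atomic oracle facts via the axioms $\mathfrak O(\overline n)$, $\neg\mathfrak O(\overline n)$ of $T|X$. The only difference is organizational --- the paper runs one simultaneous induction on $m$ and the negation-normal-form subformulas of $\varphi$, handling bounded quantifiers and Booleans inline, whereas you peel prenex $\exists\forall$-pairs and outsource the $m=0$ case to classical provable $\Sigma^0_1(X)$-completeness of $\Box_{T|X}$ --- which is equivalent given the paper's tacit reduction to formulas without consecutive unbounded quantifiers of the same kind.
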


\proof
Reasoning in $\eca$, we proceed by an external induction on $m$ and the subformulas of $\varphi$ by their build; to be precise, assume \eqref{EqLemmComplet} for each subformula of $\varphi$. Without loss of generality we may assume that negations occur only on atomic formulas, and that $\varphi$ does not contain subformulas of the form $\forall x\forall y\ \theta$ or $\exists x\exists y \ \theta$ where the occurrence of $y$ is unbounded.

For the base case, $\varphi$ is an atomic formula, which is of one of the following forms: either it contains no second-order variables, in which case we obtain $\nc_T\varphi(\dot x)$ by provable $\Sigma^0_1$-completeness. Otherwise, it is of the form $t\in X$ or $t\not\in X$ for some closed term $t$, which is provably equivalent to an axiom of $T|X$.

If $\varphi$ is bounded but not atomic, we merely follow a routine induction on the build of $\varphi$. The case where $\varphi$ is a Boolean combination of its subformulas is straightforward, and bounded quantifiers may be dealt with in a standard way, as for example in \cite{HajekPudlak:1993:Metamathematics}.

If $\varphi=\exists x \ \theta$, then for some $k$ we have that $\theta[x/\overline k]$ is true and we may use the induction hypothesis plus existential introduction. Finally, we consider the case $\varphi=\forall x \ \theta$. Since by assumption $\forall x \  \theta \in \Sigma^0_{2m+1}$ we have $\forall x \ \theta \in \Pi^0_{2m}$ since it starts with a universal quantifier so that $\theta \in \Sigma^0_{2(m-1)+1}$. Thus, by the induction hypothesis we have for every $k$ that $[\overline{m-1}| X]^{\bm n}_T\theta(\overline k)$ and therefore $[\bar m| X]^{\bm n}_T \forall x \ \theta (x).$
\endproof

As an immediate corollary we get $\Pi^0_{2m}$-completeness as well. We also get ${\bm\Sigma}^1_1$-completeness for provability involving at least $\omega$-many iterations as is manifest in the next theorem, provided we are allowed to pick a suitable oracle. We will write $[{ \omega}| X_1, \ldots, X_n]^{\Lambda}_T$ for oracle provability where the tuple $X_1, \ldots, X_n$ is represented by a single set using the conventions from Section \ref{section:FragmentsOfSOA}.

\begin{theorem}[$\eca$]\label{theorem:PiOneOneCompletenessForOracleProvability}
If $\varphi(X,x) \in{\Sigma}^1_{1}(X)$ and ${\bm \omega} \leq \Lambda$ then
\[\forall X \, \exists Y\   \Big( \varphi(X,x) \rightarrow [{ \omega}| Y, X]^{\Lambda}_T\, \varphi( \overline X, \dot x) \Big).\]
\end{theorem}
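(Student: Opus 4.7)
The plan is to exploit the standard $\Sigma^1_1$ form $\varphi(X,x) \equiv \exists Z\, \theta(X,Z,x)$ with $\theta$ arithmetic, and to use a witness $Z$ of this existential as the additional oracle $Y$. Once $Z$ is absorbed into the oracle, the remaining statement $\theta(\bar X,\bar Y,\dot x)$ is an arithmetic claim about the oracle, provable by the already established arithmetic completeness (Lemma \ref{LemmComplete}). A final step of existential introduction on the set variable, followed by monotonicity, delivers the desired $[\omega | Y, X]^\Lambda_T\,\varphi(\bar X,\dot x)$.

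More concretely, reasoning in $\eca$, I fix $X$ and $x$ and assume $\varphi(X,x)$. Writing $\varphi(X,x)$ as $\exists Z\, \theta(X,Z,x)$ with $\theta\in{\bm\Sigma}^0_{2m+1}(X,Z)$ for some fixed $m$, I pick a witness $Z$ and set $Y := Z$, combining $Y$ and $X$ into a single oracle via the pairing convention. Since $\bm\omega \leq \Lambda$, the numeral $\bar m$ embeds as an element of $\Lambda$ below $\omega$. Applying Lemma \ref{LemmComplete} to $\theta$ with the combined oracle yields
\[
[\bar m \,|\, Y, X]^\Lambda_T\,\theta(\bar X, \bar Y, \dot x).
\]
The proof of Lemma \ref{LemmComplete} adapts verbatim: atomic subformulas of the form $t\in Y$ or $t\in X$ are provably equivalent to axioms of $T|Y,X$ by the oracle axioms, and the rest of the induction on the build of $\theta$ is unchanged.

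Next, the axiom $\exists W\,\forall w\,(w\in W \leftrightarrow \mathfrak O(w))$ included in $T|Y,X$, together with ${\bm\Delta}^0_0$-comprehension in $T$, ensures that within the theory $T|Y,X$ the oracle extracts to a genuine set, and in particular the set corresponding to the second projection of $\mathfrak O$ exists. Hence $T|Y,X$ proves $\theta(\bar X,\bar Y,\dot x) \to \exists Z\,\theta(\bar X, Z, \dot x)$, and by Lemma \ref{kaxiom} (normality) we obtain $[\bar m \,|\, Y, X]^\Lambda_T\, \varphi(\bar X, \dot x)$. An application of monotonicity (Lemma \ref{theorem:oracleProvabilityIsMonoton}), using $\bar m <_\Lambda \omega$, lifts this to $[\omega \,|\, Y, X]^\Lambda_T\,\varphi(\bar X, \dot x)$, as required.

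The main obstacle I anticipate is the existential introduction step on the second-order variable $Z$, which must take place under the scope of the provability predicate. This relies essentially on the axiom $\exists W\,\forall w\,(w\in W \leftrightarrow \mathfrak O(w))$ of $T|Y,X$ to guarantee that the oracle corresponds to an honest set in the underlying theory, and on having enough comprehension to split off the projection of $\mathfrak O$ encoding $Y$. Everything else — the arithmetic completeness, normality, and monotonicity — is a direct invocation of results already in hand.
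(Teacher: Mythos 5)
Your proposal is correct and follows essentially the same route as the paper's proof: decompose $\varphi$ as $\exists Z\,\theta$ with $\theta$ arithmetic, absorb the witness into the oracle via pairing, apply Lemma \ref{LemmComplete}, and finish with monotonicity (Lemma \ref{theorem:oracleProvabilityIsMonoton}). The one step you elaborate that the paper compresses into a ``whence also'' --- the existential introduction on the set variable under the box, justified by the axiom $\exists X\,\forall x\,(x\in X\leftrightarrow\mathfrak O(x))$ of $T|Y,X$ together with enough comprehension in $T$ to split off the relevant projection of the oracle --- is precisely the purpose for which the paper included that axiom in the definition of $T|A$, so your account is a faithful (indeed more explicit) rendering of the intended argument.
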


\begin{proof}
By assumption, $\varphi(X,x)$ is of the form $\exists Y \varphi_0(Y,X,x)$ with the formula $\varphi_0(Y,X,x) \in {\Pi}^0_\omega(X,Y)$. We now reason in $\eca$, fix some $X$ and $x$ and assume $\varphi(X,x)$. Thus, for some $Y$ we have $\varphi_0(Y,X,x)$. By Lemma \ref{LemmComplete} above together with monotonicity (Lemma \ref{theorem:oracleProvabilityIsMonoton}) we see that $[{ \omega}| Y, X]^{\Lambda}_T\, \varphi_0( \overline Y, \overline X, \dot x)$ whence also $[{ \omega}| Y, X]^{\Lambda}_T\, \varphi(\overline X, \dot x)$.
\end{proof}

\section{Oracle consistency and oracle reflection}\label{section:OracleConsistencyAndOracleReflection}

In this section we shall define the notions of reflection and consistency that naturally correspond to oracle provability. Moreover, we shall link the two notions to each other and see how they relate to comprehension.

\subsection{Oracle consistency versus oracle reflection}

\begin{definition}[Oracle reflection and oracle consistency]
For $T$ a c.e.~ theory and $\Gamma$ a class of formulas not containing any occurrence of $\mathfrak O$, we define $\lambda$-$\ORFN^{\Lambda}_T[\Gamma]$ (oracle reflection) as the schema 
\[
\forall X_1 \ldots  \forall X_n \, \forall x\ \Big(\, \provor\lambda {X_1,\ldots, X_n}T \, \varphi(\overline {X_1}, \ldots, \overline{X_n}, \dot x) \to \varphi(X_1,\ldots, X_n,x) \Big)\]
for $\varphi(X_1,\ldots, X_n,x) \in\Gamma$ and ${\sf FV}(\varphi(X_1,\ldots, X_n,x)) \subseteq \{X_1,\ldots, X_n,x\}$. We define $\lambda$-$\OCons^{\Lambda}_T$ as $\lambda$-$\ORFN^{\Lambda}_T[\{ \bot \}]$.
\end{definition}

The next easy example makes clear why we imposed the restriction of $\mathfrak O$ not occurring in $\Gamma$. Let $X := \{ 2 \}$ and let $Y:=\varnothing$. Clearly we have $\provor \lambda X T \mathfrak O (\overline 2)$ and $\provor \lambda Y T \neg \mathfrak O (\overline 2)$ so that we arise at a contradiction were we allowed to apply oracle reflection to these sentences. We shall later see that oracle reflection with the restriction is consistent. Let us first see how consistency and reflection are easily related to each other.

\begin{lemma}\label{theorem:oracleConsistencyVersusOracleReflection}
Let $m>n$ denote natural numbers and let $\Lambda>\lambda \geq \omega$ denote ordinals. Over $\eca$ we have
\begin{enumerate}
\item 
$ n\text{-}{\ORFN}^{\bm m}_\eca [{\bm \Pi}^0_{2n+1}]\equiv n\text{-}{\OCons}^{\bm m}_\eca;$

\item
$\lambda\text{-}{\ORFN}^{\Lambda}_{\eca}[{\bm \Pi} ^1_1] \equiv \lambda\text{-}{\OCons}^{\Lambda}_{\eca}$.
\end{enumerate}
\end{lemma}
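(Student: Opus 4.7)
The $\supseteq$ direction of each equivalence is immediate: the formula $\bot$ lies in ${\bm\Pi}^0_{2n+1}$ and in ${\bm\Pi}^1_1$, so the stated reflection schema subsumes the corresponding consistency schema. For the $\subseteq$ direction, the plan is to mimic the proof of Lemma~\ref{theorem:generalReflectionVersusConsistency}: use provable completeness for the negations of the reflection-class formulas to transport a hypothetical counter-example into a formal proof of its negation, and then apply normality (a corollary of Lemma~\ref{kaxiom}) to extract $\bot$ inside the provability box, contradicting consistency.

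For (1), assume $n\text{-}\OCons^{\bm m}_\eca$ and let $\pi(X_1,\ldots,X_k,x)\in{\bm\Pi}^0_{2n+1}$. Coding the set parameters as a single oracle $X$ in the standard way, $\neg\pi$ becomes (logically equivalent to) a ${\bm\Sigma}^0_{2n+1}$ formula, and Lemma~\ref{LemmComplete} (with its $m$ taken to be our $n$ and its $n$ taken to be our $m$, which is legitimate since $n<m$) yields
\[
\eca\vdash \neg\pi(X,x)\rightarrow[\bar n|X]^{\bm m}_\eca\,\neg\pi(\overline X,\dot x).
\]
If $[\bar n|X]^{\bm m}_\eca\,\pi(\overline X,\dot x)$ but $\neg\pi(X,x)$, then the displayed implication gives the dual provability, and normality delivers $[\bar n|X]^{\bm m}_\eca\,\bot$, contradicting the consistency assumption at oracle $X$. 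Hence $\pi(X,x)$, as required for $n\text{-}\ORFN^{\bm m}_\eca[{\bm\Pi}^0_{2n+1}]$.

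Part (2) follows the same template, but the ${\bm\Sigma}^1_1$-completeness of Theorem~\ref{theorem:PiOneOneCompletenessForOracleProvability} introduces an extra witness set $Y$. Given $\pi\in{\bm\Pi}^1_1$ with $\neg\pi(X,x)$, the theorem supplies $Y$ with $\provor{\omega}{Y,X}{\eca}\,\neg\pi(\overline X,\dot x)$, and monotonicity in the ordinal parameter (Lemma~\ref{theorem:oracleProvabilityIsMonoton}) upgrades $\omega$ to $\lambda$, using $\lambda\geq\omega$. To close the argument by $\lambda\text{-}\OCons^{\Lambda}_\eca$ applied to the enlarged oracle $Y,X$, we still need to transport the hypothesis $\provor{\lambda}{X}{\eca}\pi(\overline X,\dot x)$ across the oracle extension, obtaining $\provor{\lambda}{Y,X}{\eca}\pi(\overline X,\dot x)$; normality then yields $\provor{\lambda}{Y,X}{\eca}\bot$, contradicting consistency at the enlarged oracle.

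The main obstacle is therefore establishing an oracle-monotonicity principle of the form $\provor{\lambda}{X}{T}\varphi\rightarrow\provor{\lambda}{Y,X}{T}\varphi$. The plan is to prove this by transfinite induction along $\Lambda$ based on the recursion~\eqref{equation:recursiveDefinitionOracleProvability}: the base case reduces to the syntactic inclusion $\Box_{T|X}\varphi\rightarrow\Box_{T|Y,X}\varphi$ (every axiom of $T|X$ is an axiom of $T|Y,X$, by inspection of the explicit description of ${\tt Axiom}_{T|A}$), and in the inductive step the witness $\psi$ and the implication $\forall x\,\psi(x)\to\varphi$ transfer unchanged. Formally, within $\aca$ one shows that any iterated oracle provability class $P'$ for $T|Y,X$ contains an iterated oracle provability class $P$ for $T|X$---whose existence, under the consistency assumption, is guaranteed by Lemma~\ref{theorem:consistencyImpliesIPC}---whence $[\lambda|X]^{\Lambda}_T\varphi$ forces $\varphi\in P_\lambda\subseteq P'_\lambda$, i.e.\ $[\lambda|Y,X]^{\Lambda}_T\varphi$.
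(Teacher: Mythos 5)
Your overall route coincides with the paper's: both items are obtained from the trivial inclusion $\bot\in\Gamma$ plus provable completeness (Lemma \ref{LemmComplete} for item 1, Theorem \ref{theorem:PiOneOneCompletenessForOracleProvability} for item 2), run through the template of Lemma \ref{theorem:generalReflectionVersusConsistency}. Indeed, the paper's proof consists of exactly these citations together with the unexpanded remark that ``one has to check that the second-order quantifier from oracle provability does not essentially change the proof''. Your item 1 is fine, and you correctly identify that the real content of that remark is an oracle-transfer principle allowing one to compare $\provor{\lambda}{X}{\eca}$ with $\provor{\lambda}{Y,X}{\eca}$, which the paper never spells out.

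However, your justification of the base case of that transfer contains a genuine error: it is \emph{false} that every axiom of $T|X$ is an axiom of $T|Y,X$. By the tupling convention of Section \ref{section:SecondOrderTheoriesWithOracles}, the oracle for the pair is the single set $\{\la 1,y\ra : y\in Y\}\cup\{\la 2,x\ra : x\in X\}$, so $T|X$ has the axiom $\mathfrak O(\overline n)$ for each $n\in X$, whereas $T|Y,X$ asserts $\mathfrak O$ only at codes of pairs; for $n\in X$ not of the form $\la 2,m\ra$ with $m\in X$, the theory $T|Y,X$ actually has the axiom $\neg\mathfrak O(\overline n)$. Thus the literal monotonicity $\Box_{T|X}\varphi\to\Box_{T|Y,X}\varphi$ is not merely unproven but refutable given consistency of the larger theory, and the statement you need is the bar-notation version $\Box_{T|X}\,\varphi(\overline X)\to\Box_{T|Y,X}\,\varphi(\overline X)$, where $\overline X$ abbreviates $\mathfrak O(\cdot)$ on the left but $\mathfrak O(\la 2,\cdot\ra)$ on the right --- two syntactically different formulas. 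This is provable, but by translating the entire derivation under the substitution $\mathfrak O(t)\mapsto\mathfrak O(\la 2,t\ra)$ rather than by axiom inclusion; note in particular that the translated set-existence axiom $\exists X\,\forall x\,\big(x\in X\leftrightarrow\mathfrak O(\la 2,x\ra)\big)$ is not an axiom of $T|Y,X$ and must be derived from the actual axiom using ${\bm\Delta}^0_0$-comprehension, which is available since $\eca\subseteq T$. With this repair, your transfinite induction comparing the IPC for $T|X$ (which exists by consistency and Lemma \ref{theorem:consistencyImpliesIPC}) with an arbitrary IPC for $T|Y,X$ does go through; but observe that it requires ${\tt wo}(\Lambda)$ together with arithmetical transfinite induction, so you should first extract enough comprehension from the consistency assumption (e.g.\ via your item 1 and the argument of Lemma \ref{LemmRefComp}) before invoking $\aca$ --- a point the paper glosses over as well.
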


\begin{proof}
For the first item, it should be clear that $n\text{-}{\ORFN}^{\bm m}_\eca [{\bm \Pi}^0_{2n+1}]\vdash n\text{-}{\OCons}^{\bm m}_\eca$; but by ${\bm \Sigma}^0_{2n+1}$ completeness of $[{ n}| X]^{\bm m}_T$ provability as expressed in Lemma \ref{LemmComplete}, the latter also implies the former and all three are equivalent. For the third item this follows from ${\bm \Sigma}^1_1$-completeness (Theorem \ref{theorem:PiOneOneCompletenessForOracleProvability}). One has to check that the second-order quantifier from oracle provability does not essentially change the proof compared to Lemma \ref{theorem:generalReflectionVersusConsistency}.
\end{proof}

%
%

\subsection{Oracle reflection and comprehension}

We shall now see that with just a little amount of reflection we get arithmetical comprehension. 

\begin{lemma}\label{LemmRefComp}
$\aca\subseteq\eca+0\text{-}{\ORFN}^{\bm 1}_{\eca}[{\bm \Sigma}^0_1]$.\david{This is all we need for now. In the $\aca$ paper, we will put the other items.}
\end{lemma}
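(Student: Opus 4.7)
The plan is to derive arithmetic comprehension from the existence of an iterated provability class $Y$, which we obtain via $\bm\Sigma^0_1$-reflection applied to $\bot$. First, applying $0$-$\ORFN^{\bm 1}_\eca[\bm\Sigma^0_1]$ to $\sigma\equiv\bot\in\bm\Sigma^0_1$ immediately yields $0$-$\OCons^{\bm 1}_\eca$, i.e.\ $\forall X\ \langle 0\mathord|X\rangle^{\bm 1}_\eca\top$. Lemma \ref{theorem:consistencyImpliesIPC}, which is available in $\eca$, then gives $\forall X\, \exists Y\ {\tt IPC}^{\bm 1}_{\eca|X}(Y)$.

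Next, I unfold ${\tt IPC}^{\bm 1}_{\eca|X}(Y)$. Since $|\bm 1|=\{0\}$ admits no $\xi$ with $\xi<_{\bm 1}0$, the $\omega$-rule clause $\oraclerule d \xi 0 \psi \varphi Y$ is vacuously false, so the defining equivalence collapses to $\langle 0,\varphi\rangle\in Y\leftrightarrow\Box_{\eca|X}\varphi$. In particular, once such a $Y$ is at hand, $[0\mathord|X]^{\bm 1}_\eca\varphi\leftrightarrow\Box_{\eca|X}\varphi$ for every $\varphi$. Now let $\sigma(x,X)\in\bm\Sigma^0_1$. By provable $\bm\Sigma^0_1$-completeness (Lemma \ref{LemmComplete}) together with the reflection hypothesis,
\[
\sigma(x,X)\ \leftrightarrow\ \Box_{\eca|X}\sigma(\dot x,\overline X)\ \leftrightarrow\ \langle 0,\sigma(\dot x,\overline X)\rangle\in Y,
\]
whose rightmost expression is $\bm\Delta^0_0$ in $x$ with $Y$ as parameter. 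Applying $\bm\Delta^0_0$-$\compax$, I obtain $\{x:\sigma(x,X)\}$ as a set. This establishes $\bm\Sigma^0_1$-$\compax$ with arbitrary set parameters.

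Full arithmetic comprehension then follows by a routine external induction on formula complexity: a $\bm\Pi^0_n$ set is the complement of a $\bm\Sigma^0_n$ set via $\bm\Delta^0_0$-$\compax$, and a $\bm\Sigma^0_{n+1}$ set $\{x:\exists y\,\psi(x,y,X)\}$ is obtained by first forming $W=\{\langle x,y\rangle:\psi(x,y,X)\}$ via $\bm\Pi^0_n$-$\compax$ and then applying $\bm\Sigma^0_1$-$\compax$ with parameter $W$. Together with the induction axiom $\tt Ind$ inherited from $\eca$, this delivers $\aca$.

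The main obstacle will be the careful verification that when $\Lambda={\bm 1}$ the iterated provability class $Y$ really encodes the standard predicate $\Box_{\eca|X}$: one must unpack the codes $c=\langle d,\xi,\psi\rangle$ from Definition \ref{definition:OracleRuleWithSecondOrderVariable} and observe that the $\omega$-rule clause cannot fire in this trivial ordinal. Once this bridge between iterated oracle provability and ordinary provability is in place, the remainder of the argument is a straightforward combination of $\bm\Sigma^0_1$-completeness, oracle reflection, and $\bm\Delta^0_0$-$\compax$.
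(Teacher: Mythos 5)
Your proposal is correct and takes essentially the same route as the paper's proof: reflection applied to $\bot$ yields $0$-$\OCons^{\bm 1}_{\eca}$, hence an iterated provability class $Y$ by Lemma \ref{theorem:consistencyImpliesIPC}; the desired set is obtained by ${\bm\Delta}^0_0$-$\compax$ applied to the ${\bm\Delta}^0_0(Y)$ condition $\provx{0}{Y}{\sigma(\dot x,\overline X)}$; and the two inclusions follow from provable completeness (Lemma \ref{LemmComplete}) and the reflection hypothesis, exactly as in the paper. The only divergences are local and, if anything, improvements: where the paper invokes the uniqueness Lemma \ref{theorem:AcaNaughtProvesUniquenessIPCGivenWO} (stated over $\aca$, which sits uneasily in a proof whose ambient theory is $\eca$ plus reflection) to pass between $\provx{0}{Y}{\varphi}$ and $\provor{0}{X}{\eca}\varphi$, you unfold the definition of ${\tt IPC}^{\bm 1}_{\eca|X}$ directly and observe that over $\bm 1$ the $\omega$-rule clause cannot fire, so every such class collapses to $\Box_{\eca|X}$ --- an argument already available in $\eca$ --- and you spell out the standard bootstrap from ${\bm\Sigma}^0_1$-$\compax$ to full arithmetic comprehension, which the paper compresses into ``we only need to prove ${\bm\Sigma}^0_1$-$\tt CA$'' (the paper instead makes explicit the reduction of several set parameters to one via pairing, which your version absorbs into the multi-oracle form of the reflection schema).
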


\proof
{Let $\vec X$ be a sequence of second-order variables $X_1,\ldots, X_n$ all different from $Y$. We only need to prove ${\bm\Sigma}^0_1$-$\tt CA$, which really amounts to $\forall \vec X\, \exists Y \, \forall y\ (y \in Y \, \leftrightarrow \, \varphi(x,\vec X))$ where $\varphi(x,\vec X)$ can be any formula in $\Sigma^0_1(\vec X)$. Let $\tilde \varphi(x,X)$ arise from $\varphi (x,\vec X)$ by replacing each occurrence of $x\in X_i$ by $\la i,x\ra \in X$. Clearly $\tilde \varphi$ is still a ${\bm \Sigma}^0_1$ formula and $\varphi$-$\tt CA$ is provably equivalent to $\tilde \varphi$-$\tt CA$ so that we may restrict our proof to formulas with a single free set parameter.}

Fix some set $X$. Since reflection implies consistency we know that there is some iterated provability predicate for $X$ (Lemma \ref{theorem:consistencyImpliesIPC}). That is, there is some $P$ with ${\tt IPC}_{\eca|X}^{\bm 1}(P)$ and moreover, since $\bm 1$ is provably well-founded we know that $P$ is unique (Lemma \ref{theorem:AcaNaughtProvesUniquenessIPCGivenWO}).
Let $\varphi(x,X)$ be a ${\Sigma}^0_1(X)$ formula. By $\Delta^0_0$-$\tt CA$ we can form the set
\[
Z=\{z \mid [ 0 | X]_P \, \varphi(\overline z, \overline X)\}.
\]
We claim that $z\in Z \leftrightarrow \varphi(z,X)$ which finishes the proof. If $z\in Z$, then by the uniqueness of $P$ we get $[0|X]^{\bm 1}_\eca \varphi(\overline z, \overline X)$, whence by reflection $\varphi(z, X)$. On the other hand, if $\varphi(z, X)$ we get by completeness (Lemma \ref{LemmComplete}) that $[0|X]^{\bm 1}_\eca \varphi(\overline z, \overline X)$ so that $z\in Z$.
\endproof

The upshot of this lemma is that we can perform many of our arguments in $\aca$ once we have just a little bit of reflection.

\section{Predicative reflection and consistency}\label{section:PredicativeReflectionAndConsistency}

Our notions of oracle reflection and oracle consistency depended on a particular well-order $\Lambda$. We shall now define what we call \emph{predicative oracle reflection} and \emph{predicative oracle consistency} which stipulate the oracle notions for any well-order.

\begin{definition}
We define the principle \emph{predicative oracle reflection}
\[
{\tt Pred\mbox{-}O\mbox{-}RFN}_T[\Gamma]=\forall \Lambda \, \forall\, \lambda\in|\Lambda|\ \big({\tt wo}({\Lambda})\to \lambda\text{-}{\ORFN}^{\Lambda}_T[\Gamma]\big),
\]
and \emph{predicative oracle consistency}
\[
{\tt Pred\mbox{-}O\mbox{-}Cons}(T)= \forall \Lambda\, \forall \, \lambda\in|\Lambda|\ \big ({\tt wo}({\Lambda})\to \lambda\mbox{-}\OCons^{\Lambda}_T \big).
\]
\end{definition}
Let us first make a simple observation.
\begin{lemma}\label{theorem:ECAoracleConsistencyVersusOracleReflection}
$\eca \vdash {\tt Pred\mbox{-}O\mbox{-}Cons}(\eca) \leftrightarrow {\tt Pred\mbox{-}O\mbox{-}RFN}_\eca[{\bm \Pi}^1_1]$.
\end{lemma}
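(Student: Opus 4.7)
The forward direction is immediate: since $\bot \in {\bm\Pi}^1_1$, the instance of predicative $\Pi^1_1$-reflection with $\varphi = \bot$ is exactly predicative oracle consistency, so ${\tt Pred\mbox{-}O\mbox{-}RFN}_\eca[{\bm\Pi}^1_1] \vdash {\tt Pred\mbox{-}O\mbox{-}Cons}(\eca)$ in $\eca$.

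For the converse, the plan is to mimic the argument of Lemma \ref{theorem:oracleConsistencyVersusOracleReflection}(2), which uses ${\bm\Sigma}^1_1$-completeness (Theorem \ref{theorem:PiOneOneCompletenessForOracleProvability}). The only obstacle is that Lemma \ref{theorem:oracleConsistencyVersusOracleReflection}(2) requires $\Lambda > \lambda \geq \omega$, whereas predicative reflection quantifies over all well-orders $\Lambda$ and all $\lambda \in |\Lambda|$, including small finite ones. The idea is to extend the given well-order to force the hypothesis of Theorem \ref{theorem:PiOneOneCompletenessForOracleProvability} to apply. Concretely, reasoning in $\eca$ and assuming ${\tt Pred\mbox{-}O\mbox{-}Cons}(\eca)$, fix a well-order $\Lambda$ with $\lambda \in |\Lambda|$, a $\Pi^1_1$ formula $\varphi(\vec X, x)$, and set parameters $\vec X$. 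Suppose $\provor\lambda {\vec X} \eca \varphi(\overline{\vec X}, \dot x)$ and aim at $\varphi(\vec X, x)$.

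Form the well-order $\Lambda^+ = \bm\omega + \Lambda$ by prepending a copy of $\bm\omega$; since ${\tt wo}(\Lambda)$, also ${\tt wo}(\Lambda^+)$. Let $\lambda^+$ be the image of $\lambda$ in $\Lambda^+$, so $\omega \leq \lambda^+ < \Lambda^+$. By a routine embedding argument on iterated provability classes (any IPC for $\Lambda$ can be extended to an initial segment of an IPC for $\Lambda^+$ agreeing at $\lambda$), provability transfers: $[\lambda^+|\vec X]^{\Lambda^+}_\eca \varphi(\overline{\vec X}, \dot x)$. Now suppose for contradiction that $\neg\varphi(\vec X, x)$. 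Since $\neg\varphi \in {\bm\Sigma}^1_1$ and $\bm\omega \leq \Lambda^+$, Theorem \ref{theorem:PiOneOneCompletenessForOracleProvability} yields some $Y$ with $[\omega | Y, \vec X]^{\Lambda^+}_\eca \neg\varphi(\overline{\vec X}, \dot x)$. By monotonicity (Lemma \ref{theorem:oracleProvabilityIsMonoton}) and by enlarging the oracle, both $[\lambda^+ | Y, \vec X]^{\Lambda^+}_\eca \neg\varphi$ and $[\lambda^+ | Y, \vec X]^{\Lambda^+}_\eca \varphi$ hold, so normality (Lemma \ref{kaxiom}) gives $[\lambda^+ | Y, \vec X]^{\Lambda^+}_\eca \bot$, contradicting $\lambda^+\text{-}\OCons^{\Lambda^+}_\eca$.

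The main delicate point in the plan is the embedding of the provability predicate across the well-orders $\Lambda \hookrightarrow \Lambda^+$, which is not explicitly proved earlier in the paper but follows readily from Definition \ref{definition:OracleRuleWithSecondOrderVariable}: the recursion defining $\Provfor_{T|X}(P)$ is local in the ordinal parameter below $\lambda$, so the restriction of any $\Lambda^+$-IPC to the initial segment corresponding to $\Lambda$ is a $\Lambda$-IPC, and conversely one may extend. Aside from this bookkeeping, the argument is a straightforward uniformisation of Lemma \ref{theorem:oracleConsistencyVersusOracleReflection}(2) to the predicative setting, with the extension $\Lambda \mapsto \bm\omega + \Lambda$ serving precisely to validate the hypothesis of ${\bm\Sigma}^1_1$-completeness.
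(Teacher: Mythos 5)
Your forward direction is fine and matches the paper. For the converse, you have in fact put your finger on a real subtlety that the paper's own one-line proof (a direct appeal to Lemma \ref{theorem:oracleConsistencyVersusOracleReflection}) passes over: item 2 of that lemma is only stated for $\lambda\geq\omega$, whereas the predicative schemata quantify over all well-orders and all $\lambda$, and the small instances are exactly the ones used downstream (Lemma \ref{LemmRefComp} needs $0\text{-}{\ORFN}^{\bm 1}_{\eca}[{\bm\Sigma}^0_1]$). However, the step you dismiss as routine bookkeeping is precisely where your argument breaks. In $\Lambda^+=\bm\omega+\Lambda$ the copy of $\Lambda$ is a \emph{final} segment of $\Lambda^+$, not an initial one, and the pullback of a $\Lambda^+$-IPC along the embedding $\iota\colon\Lambda\to\Lambda^+$ is \emph{not} a $\Lambda$-IPC: at the image $\iota(0_\Lambda)$ of the $\Lambda$-least element, a $\Lambda^+$-IPC already contains every formula derivable with finitely nested $\omega$-rules, since the rule-depths may lie in the prepended copy of $\bm\omega$, whereas by Definition \ref{definition:OracleRuleWithSecondOrderVariable} the level $0_\Lambda$ of any $\Lambda$-IPC consists of exactly the plain $T|X$-provable formulas (no depth $\xi<_\Lambda 0_\Lambda$ exists). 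For the same reason your parenthetical claim that one may extend a $\Lambda$-IPC to a $\Lambda^+$-IPC ``agreeing at $\lambda$'' is false: the $\Lambda^+$-levels are in general strictly larger. What is true, and is all your transfer $[\lambda|X]^{\Lambda}_{\eca}\varphi\Rightarrow[\lambda^+|X]^{\Lambda^+}_{\eca}\varphi$ requires, is the levelwise \emph{inclusion} $P_\xi\subseteq P^+_{\iota(\xi)}$ between IPCs (which exist under the assumed consistency by Lemma \ref{theorem:consistencyImpliesIPC}); but that inclusion must be proved by transfinite induction along $\Lambda$ on a $\Pi^0_1(P,P^+)$ formula, which by Lemma \ref{theorem:WellOrderingWithComprehensionImpliesTI} requires $\Sigma^0_1$ comprehension and is not available in bare $\eca$, the theory in which you claim to be reasoning.

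This leaves a circularity you must discharge. Your argument also invokes Lemma \ref{kaxiom}, which the paper proves in $\aca$, and the oracle-enlargement step $[\lambda^+|X]^{\Lambda^+}_{\eca}\varphi\Rightarrow[\lambda^+|Y,X]^{\Lambda^+}_{\eca}\varphi$ is a cross-theory comparison of IPCs of the same induction-hungry kind; yet in the paper's architecture $\aca$ is extracted from ${\tt Pred\mbox{-}O\mbox{-}Cons}(\eca)$ precisely through the present lemma followed by Lemma \ref{LemmRefComp}, whose reflection instance has $\Lambda={\bm 1}$ and $\lambda=0$ --- one of the small cases your argument is supposed to deliver. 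The gap is reparable: the IPC equation pins the bottom level of every IPC, for every order, down to plain provability outright, and external induction determines each standard finite level already in $\eca$; from this one can extract the small reflection instances, hence $\aca$, directly from predicative consistency, and only then run your prepending construction with the levelwise inclusion established by arithmetical transfinite induction. But as written, the ``routine embedding'' is false as stated, and the induction it conceals exceeds your base theory.
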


\begin{proof}
This follows directly from Lemma \ref{theorem:oracleConsistencyVersusOracleReflection}.
\end{proof}

The main result of this section is that predicative analysis follows from predicative oracle consistency over a rather weak theory. To be more precise, we shall prove that $\atr$ follows from $\eca + {\tt Pred\mbox{-}O\mbox{-}Cons}(\eca)$. {In order to prove this we first shall first analyze $\atr$ in more detail.}

{\subsection{Predicative Analysis revisited}}

Let us recall from Section \ref{section:TransfiniteInductionAndTransfiniteRecursion} the formula ${\tt TR}^\Lambda(\varphi,X)$ which says that the set $X$ satisfies transfinite recursion for $\varphi$ over the well-order $\Lambda$. If $\varphi$ is arithmetical, then so is ${\tt TR}^\Lambda(\varphi,X)$. Moreover, we note that ${\tt TR}^\Lambda(\varphi,X)$ only imposes restrictions on numbers which code pairs $\la \xi,n\ra$ with $\xi \in |\Lambda|$ and says nothing about numbers not of this form.

In order to prove properties of transfinite recursion it will be useful to have restricted versions of ${\tt TR}^\Lambda(\varphi,X)$ whence we define
\[{\tt TR}_{\lambda}^\Lambda(\varphi,X) :=\forall \,  \xi \leq_\Lambda \lambda\, \forall x \ \big(x\in X_\xi\leftrightarrow \varphi(x,X_{<_\Lambda\xi})\big)\]
and ${\tt TR}_{<\lambda}^\Lambda(\varphi,X) :=\forall \,  \xi <_\Lambda \lambda\, {\tt TR}_{\lambda}^\Lambda(\varphi,X).$ The formula $\varphi$ may have many free number and set variables. However, one set variable of $\varphi$ plays a special role in ${\tt TR}^\Lambda(\varphi,X)$, like the variable $X$ in ${\tt TR}_{\lambda}^\Lambda(\varphi,X)$ above. We call this the \emph{recursion variable}. 

Sometimes we may wish to emphasize that ${\tt TR}_{\lambda}^\Lambda(\varphi,Y)$ has other free variables appearing in $\varphi$. We will do so by using a semicolon as in ${\tt TR}_{\lambda}^\Lambda(\varphi,Y;X)$. We stipulate that any first-order variable, say $z$, in ${\tt TR}_{\lambda}^\Lambda(\varphi,Y;z)$ will be dotted when occurring under a box. Thus if, for example $z$ is free in $\varphi$, then ${\tt TR}_{\lambda}^\Lambda(\varphi,X)$ means ${\tt TR}_{\lambda}^\Lambda(\varphi,X;z)$ and by definition $[\xi|\Lambda]_T^\Lambda {\tt TR}_{\dot \lambda}^\Lambda(\varphi,Y)$ will denote $[\xi|\Lambda]_T^\Lambda {\tt TR}_{\dot \lambda}^{\overline \Lambda}(\varphi,Y; \dot z)$.

By convention we shall write the number variable which is universally quantified in ${\tt TR}_{\lambda}^\Lambda(\varphi,X)$ ($x$ in the above formulation) as the first in the list of variables. By $\equiv_{<_\Lambda \xi}$ we will denote set equality up to $\xi$, that is 
\[
X\equiv_{<_\Lambda \xi} Y \ :=\ \forall \, \zeta{<_\Lambda}\xi\, \forall x\ \Big(  \la \zeta,x \ra \in X \ \leftrightarrow \ \la \zeta,x \ra \in Y \Big).
\]
If the context allows us to, we shall simply write $X\equiv_{\xi} Y$ instead of $X\equiv_{<_\Lambda \xi} Y$. Let us dwell for a moment on how we can see the veracity of $\atr$ within second-order arithmetic. Given an arithmetical formula $\varphi$, could we directly \emph{define} a set $X$ so that ${\tt TR}^\Lambda(\varphi,X)$? 

Second-order arithmetic does not directly allow transfinite recursive definitions as in ${\tt TR}^\Lambda(\varphi,X)$, so that we cannot define $X_\xi$ out of $X_{<\xi}$ outright. However, we know that given a formula $\varphi$ the set $A$ such that ${\tt TR}^\Lambda(\varphi,A)$ holds is uniquely determined up to $\Lambda$. In particular, any initial part $A_{<\xi}$ is unique so that instead of recursively calling upon $A_{<\xi}$ we may use any set satisfying the defining properties so that the recursion is mimicked by a universal set quantifier. This reasoning can be formalized in $\aca$ for arithmetical $\varphi$:

\begin{lemma}\label{theorem:unicityTransfiniteRecursion}
Let $\varphi \in {\bm \Pi}^0_\omega$ be arithmetical. We have that
\[
\aca \vdash {\tt wo}(\Lambda) \ \to \ \forall \lambda \leq_\Lambda \Lambda\ \Big(\, {\tt TR}^\Lambda_{<\lambda}(\varphi,X) \wedge {\tt TR}^\Lambda_{<\lambda}(\varphi,Y) \ \to \ X\equiv_\lambda Y \Big).
\]
\end{lemma}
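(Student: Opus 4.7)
The plan is to argue by transfinite induction along $\Lambda$ on the auxiliary formula
\[
\psi(\xi) \ :=\ \forall x\ \bigl(\la \xi,x\ra\in X \leftrightarrow \la \xi,x\ra \in Y\bigr),
\]
with $X,Y$ as free set parameters and $\lambda$ fixed. Since $\varphi$ is arithmetical, $\psi$ is arithmetical (in fact, $\Pi^0_1$ with $X,Y$ as parameters), so $(\neg\psi)$-${\compax}$ is available in $\aca$. Combined with ${\tt wo}(\Lambda)$, Lemma~\ref{theorem:WellOrderingWithComprehensionImpliesTI} then gives us $\transin^\Lambda(\psi)$, which is precisely the induction principle we need.

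For the inductive step, we reason in $\aca$ and fix $\xi \leq_\Lambda \lambda$; assume ${\tt TR}^\Lambda_{<\lambda}(\varphi,X)$, ${\tt TR}^\Lambda_{<\lambda}(\varphi,Y)$ and $\forall\,\zeta{<_\Lambda}\xi\,\psi(\zeta)$. The induction hypothesis is literally the statement $X\equiv_{<_\Lambda \xi}Y$. Now by the hypothesis ${\tt TR}^\Lambda_{<\lambda}(\varphi,X)$ (applied at $\xi$, using that $\xi <_\Lambda \lambda$ in the relevant case; the case $\xi = \lambda$ is treated analogously via the $\leq_\Lambda$ in the definition of ${\tt TR}^\Lambda_\lambda$) we have
\[
\forall x\ \bigl(x\in X_\xi\leftrightarrow \varphi(x,X_{<_\Lambda\xi})\bigr),
\]
and likewise for $Y$. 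It therefore suffices to verify the substitution fact
\[
X\equiv_{<_\Lambda \xi} Y\ \to\ \forall x\ \bigl(\varphi(x,X_{<_\Lambda \xi})\leftrightarrow \varphi(x,Y_{<_\Lambda \xi})\bigr).
\]
This is immediate from the definition of $\varphi(\cdot,Z_{<_\Lambda \xi})$: every occurrence of $t\in Z$ has been replaced by $(t)_0 <_\Lambda \xi\wedge t\in Z$, so the truth value depends on $Z$ only through the set $\{t\in Z: (t)_0<_\Lambda \xi\}$, and $X,Y$ agree on this set by the induction hypothesis. Formally, this is a trivial external induction on the build of $\varphi$ (using only propositional reasoning, number quantifiers, and the atomic replacement), nothing that needs to be formalized inside the theory. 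This yields $\psi(\xi)$.

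Applying $\transin^\Lambda(\psi)$ gives $\forall\,\xi{<_\Lambda}\lambda\,\psi(\xi)$, which is exactly $X\equiv_\lambda Y$ as desired. The only genuinely load-bearing ingredient is the availability of arithmetical transfinite induction in $\aca$ (via Lemma~\ref{theorem:WellOrderingWithComprehensionImpliesTI}); the substitution step and the structure of the induction are entirely routine. I expect no serious obstacle, though one should state clearly in the writeup that $\psi$ is arithmetical despite mentioning $X,Y$, so that the comprehension instance needed falls within ${\bm \Pi}^0_\omega$-$\compax$.
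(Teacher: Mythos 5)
Your proof is correct and is essentially the paper's argument: the paper also establishes the claim by transfinite induction along $\Lambda$ on an arithmetical formula (there, directly on ${\tt TR}^\Lambda_{<\lambda}(\varphi,X)\wedge{\tt TR}^\Lambda_{<\lambda}(\varphi,Y)\to X\equiv_\lambda Y$), with the induction licensed exactly as you say by Lemma~\ref{theorem:WellOrderingWithComprehensionImpliesTI} and arithmetical comprehension; your version merely fixes the hypotheses and inducts on the level-wise agreement formula $\psi(\xi)$, spelling out the substitution step the paper leaves implicit. One small cleanup: the case $\xi=\lambda$ you mention is both unneeded (since $X\equiv_\lambda Y$ only concerns levels strictly below $\lambda$) and not actually covered by the hypothesis ${\tt TR}^\Lambda_{<\lambda}$, so simply restrict the induction to $\xi<_\Lambda\lambda$ and drop that remark.
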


\begin{proof}
Reason in $\aca$ and assume ${\tt wo}(\Lambda)$. The claim follows by transfinite induction on the arithmetical formula $\, {\tt TR}^\Lambda_{<\lambda}(\varphi,X) \wedge {\tt TR}^\Lambda_{<\lambda}(\varphi,Y) \ \to \ X\equiv_\lambda Y$.
\end{proof}

The unicity proved by this lemma justifies the following definition.
\begin{definition}
\[
\widehat{\tt TR}^\Lambda_\lambda(\varphi,X) := \forall x \ \Big ( x_0 \leq_\Lambda \lambda \to \Big[ x\in X \ \leftrightarrow \ \forall Y\ \big( {\tt TR}^\Lambda_{x_0} (\varphi, Y) \to \varphi(x_1, Y_{<x_0}) \big) \Big]\Big)
\]\david{I had to change to $\leq_\Lambda$ because of the case where $\Lambda$ is a successor.}
\end{definition}

In particular, $\aca$ can prove that $\widehat{\tt TR}$ and ${\tt TR}$ are equivalent on arithmetical formulas.

\begin{lemma}\label{theorem:Pi11ATRversusATR}
Let $\varphi \in {\bm \Pi}^0_\omega$ be arithmetical. We have that
\[
\aca \vdash {\tt wo}(\Lambda) \ \to \ \forall \lambda < \Lambda\ \Big(  \widehat{\tt TR}^\Lambda_\lambda(\varphi,X) \ \leftrightarrow \ {\tt TR}^\Lambda_\lambda(\varphi,X) \Big).
\]
\end{lemma}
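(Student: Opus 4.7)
The plan is to prove the two implications separately, using the unicity lemma (Lemma \ref{theorem:unicityTransfiniteRecursion}) and transfinite induction on arithmetical formulas (Lemma \ref{theorem:WellOrderingWithComprehensionImpliesTI}), both available in $\aca$. Throughout, reason in $\aca$ and assume ${\tt wo}(\Lambda)$ and $\lambda <_\Lambda \Lambda$.

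For the direction ${\tt TR}^\Lambda_\lambda(\varphi,X)\Rightarrow \widehat{\tt TR}^\Lambda_\lambda(\varphi,X)$, fix $x$ with $x_0\leq_\Lambda \lambda$. If $x\in X$, then ${\tt TR}^\Lambda_\lambda(\varphi,X)$ gives $\varphi(x_1,X_{<x_0})$; for any $Y$ with ${\tt TR}^\Lambda_{x_0}(\varphi,Y)$, applying Lemma \ref{theorem:unicityTransfiniteRecursion} yields $X_{<x_0}=Y_{<x_0}$, whence $\varphi(x_1,Y_{<x_0})$. Conversely, since ${\tt TR}^\Lambda_\lambda(\varphi,X)$ trivially implies ${\tt TR}^\Lambda_{x_0}(\varphi,X)$, taking $Y=X$ in the universal quantifier of $\widehat{\tt TR}$ produces $\varphi(x_1,X_{<x_0})$, so $x\in X$ by ${\tt TR}$.

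For the direction $\widehat{\tt TR}^\Lambda_\lambda(\varphi,X)\Rightarrow {\tt TR}^\Lambda_\lambda(\varphi,X)$, I would show by transfinite induction on $\xi$ that ${\tt TR}^\Lambda_\xi(\varphi,X)$ holds for every $\xi\leq_\Lambda \lambda$. Since $\varphi$ is arithmetical, the formula being inducted on is arithmetical as well, so Lemma \ref{theorem:WellOrderingWithComprehensionImpliesTI} applies. The induction hypothesis is ${\tt TR}^\Lambda_{<\xi}(\varphi,X)$. Using $\aca$'s arithmetical comprehension, form the auxiliary set
\[
X' = \{\langle \zeta,u\rangle : \zeta <_\Lambda \xi \wedge \langle \zeta,u\rangle\in X\}\cup \{\langle \xi,u\rangle : \varphi(u,X_{<\xi})\},
\]
which satisfies ${\tt TR}^\Lambda_\xi(\varphi,X')$ by the induction hypothesis (for $\zeta<\xi$) and by construction (at stage $\xi$). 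Instantiating $\widehat{\tt TR}^\Lambda_\lambda(\varphi,X)$ at $\langle \xi,u\rangle$ and taking $Y=X'$ gives $\langle \xi,u\rangle\in X \Rightarrow \varphi(u,X_{<\xi})$. For the reverse, if $\varphi(u,X_{<\xi})$, then for any $Y$ with ${\tt TR}^\Lambda_\xi(\varphi,Y)$, Lemma \ref{theorem:unicityTransfiniteRecursion} applied to $X'$ and $Y$ yields $Y_{<\xi}=X'_{<\xi}=X_{<\xi}$, so $\varphi(u,Y_{<\xi})$ holds and $\widehat{\tt TR}$ delivers $\langle \xi,u\rangle \in X$.

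The main obstacle is the bookkeeping between the ``up to and including'' index $\xi$ in ${\tt TR}^\Lambda_\xi$ and the strict ``$<\zeta$'' index in Lemma \ref{theorem:unicityTransfiniteRecursion}, and making sure the auxiliary set $X'$ can be produced by arithmetical comprehension rather than by appealing to transfinite recursion itself, which is what we are trying to justify. Both issues are cleanly handled within $\aca$ once the induction is organized as above.
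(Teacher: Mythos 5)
Your proof is correct and follows essentially the same route as the paper's: the direction ${\tt TR}^\Lambda_\lambda(\varphi,X)\to\widehat{\tt TR}^\Lambda_\lambda(\varphi,X)$ via the unicity lemma (Lemma \ref{theorem:unicityTransfiniteRecursion}), and the converse by arithmetical transfinite induction on $\xi\leq_\Lambda\lambda$ showing ${\tt TR}^\Lambda_\xi(\varphi,X)$. In fact you supply a detail the paper's terse proof leaves implicit but genuinely needs, namely the auxiliary witness $X'$ obtained by arithmetical comprehension, without which the universal quantifier over $Y$ in $\widehat{\tt TR}$ could be vacuous at stage $\xi$.
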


\begin{proof}
Reason in $\aca$ and assume ${\tt wo}(\Lambda)$. The implication ${\tt TR}^\Lambda_\lambda(\varphi,X) \to \widehat{\tt TR}^\Lambda_\lambda(\varphi,X)$ follows directly from Lemma \ref{theorem:unicityTransfiniteRecursion}. For the other direction, assume $\widehat{\tt TR}^\Lambda_\lambda(\varphi,X)$ and prove by induction on $\xi\leq \lambda$ that ${\tt TR}^\Lambda_\xi(\varphi,X)$.
\end{proof}

As a corollary we see that $\atr$ follows from ${\bm \Pi}^1_1$ comprehension.

\begin{corollary}
$\eca + {\bm \Pi}^1_1\mbox{-}\compax \vdash \atr$
\end{corollary}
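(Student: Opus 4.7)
The plan is to combine ${\bm\Pi}^1_1$-comprehension with the equivalence between $\widehat{\tt TR}$ and ${\tt TR}$ provided by Lemma \ref{theorem:Pi11ATRversusATR}. First observe that $\eca + {\bm\Pi}^1_1\text{-}\compax$ contains $\aca$: since ${\bm\Pi}^0_\omega\text{-}\compax \subseteq {\bm\Pi}^1_1\text{-}\compax$ we get all of arithmetic comprehension, and $\tt Ind$ is already available in $\eca$. Thus every result established above for $\aca$ is at our disposal.

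Now fix an arithmetic formula $\varphi(x,Y;\vec Z) \in {\bm\Pi}^0_\omega$ (possibly with set parameters $\vec Z$) and assume ${\tt wo}(\Lambda)$. Since $\varphi$ and ${\tt TR}^\Lambda_{x_0}(\varphi,Y)$ are both arithmetic, the formula
\[
\psi(x) \ :=\ \forall Y\ \big({\tt TR}^\Lambda_{x_0}(\varphi,Y)\to \varphi(x_1, Y_{<x_0})\big)
\]
is $\Pi^1_1$ in the parameters $\Lambda, \vec Z$. Apply ${\bm\Pi}^1_1\text{-}\compax$ to form the set $X=\{x : \psi(x)\}$.

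By construction $X$ satisfies $\widehat{\tt TR}^\Lambda_\lambda(\varphi,X)$ for every $\lambda \leq_\Lambda \Lambda$: indeed, for any $x$ with $x_0 \leq_\Lambda \lambda$, the biconditional $x\in X \leftrightarrow \psi(x)$ holds trivially by comprehension. Invoking Lemma \ref{theorem:Pi11ATRversusATR}, we conclude ${\tt TR}^\Lambda_\lambda(\varphi,X)$ for every $\lambda <\Lambda$, and unwinding the definition this gives $\forall\, \xi<\Lambda\, \forall x\ (x\in X_\xi \leftrightarrow \varphi(x, X_{<\xi}))$, that is, ${\tt TR}^\Lambda(\varphi,X)$.

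There is no real obstacle here, as all the heavy lifting was carried out in Lemmas \ref{theorem:unicityTransfiniteRecursion} and \ref{theorem:Pi11ATRversusATR}. The only point to verify is the complexity bookkeeping, namely that $\psi(x)$ is indeed $\Pi^1_1$, which is immediate because $\varphi$ is assumed arithmetic so the matrix under the universal set quantifier stays in ${\bm\Pi}^0_\omega$.
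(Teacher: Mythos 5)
Your proof is correct and follows essentially the same route as the paper: you form, by ${\bm\Pi}^1_1$-comprehension, exactly the set $\big\{x \mid \forall Y\,\big({\tt TR}^\Lambda_{x_0}(\varphi,Y)\to\varphi(x_1,Y_{<x_0})\big)\big\}$ that the paper's one-line proof uses, which by definition satisfies $\widehat{\tt TR}$, and then appeal to Lemma \ref{theorem:Pi11ATRversusATR} to convert $\widehat{\tt TR}$ into ${\tt TR}$ over $\aca$. Your version merely makes explicit two steps the paper leaves implicit (that $\eca+{\bm\Pi}^1_1\text{-}\compax\supseteq\aca$, and the invocation of Lemma \ref{theorem:Pi11ATRversusATR}), and the omission of the conjunct $x_0\in|\Lambda|$ from the comprehension formula is harmless since ${\tt TR}^\Lambda(\varphi,X)$ imposes no conditions on elements not coding pairs $\langle\xi,n\rangle$ with $\xi\in|\Lambda|$.
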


\begin{proof}
Given some $\Lambda$ with ${\tt wo}(\Lambda)$, by ${\bm \Pi}^1_1\mbox{-}\compax$ we can form the set 
\[
\Big \{ x \mid x_0 \in |\Lambda| \wedge  \forall Y\ \big( {\tt TR}^\Lambda_{x_0} (\varphi, Y) \to \varphi(x_1, Y_{<x_0}) \big) \Big \}.
\qedhere\]
\end{proof}
The proof of this corollary reveals our proof strategy to see that $\atr$ is provable from the predicative consistency of \eca. If we have sufficient completeness and soundness, we can replace $\forall Y\, \big( {\tt TR}^\Lambda_{x_0} (\varphi, Y;X) \to \varphi(x_1, Y_{<x_0},X)\big)$ in the comprehension axiom by 
\begin{equation}\label{equation:PiOneOneButUnderABox}
\exists\, \xi < \Lambda\ [\xi |\Lambda, X]_T^\Lambda \, \big( \forall Y\ \big( {\tt TR}^\Lambda_{x_0} (\varphi, Y;\overline X) \to \varphi(x_1, Y_{<x_0}, \overline X) \big).
\end{equation} 
At first sight, this does not seem much of an improvement since $[\xi |\Lambda]_T^\Lambda \chi$ is $\Pi^1_1$ too: $\forall Z(\Provfor_{T|\Lambda}(Z)\to \provx \xi Z{\chi})$. However, under sufficient assumptions we can replace this by $\provx \xi P {\chi}$ for a particular iterated provability class $P$. Note that $\provx \xi P {\chi}$ is a ${\bm \Delta}^0_0$ formula! Moreover, we shall see that we can estimate the needed $\xi$ in \eqref{equation:PiOneOneButUnderABox} from $x$ so that we can even dispense of the existential quantifier in \eqref{equation:PiOneOneButUnderABox}.

A key ingredient that explains why this strategy will work over $\aca$ is that transfinite induction up to $\lambda<\Lambda$ can be mimicked in \eca by an $omega$-rule of nesting at most $\lambda$ as illustrated by the next lemma. Just as for transfinite recursion, let us consider restricted versions of transfinite induction:
\[
{\transin}^\Lambda_\lambda(\varphi) \ := \Big ( \forall \, \xi <_\Lambda \lambda \ \big(\forall \, \zeta <_\Lambda \xi \ \varphi(\zeta)\to \varphi(\xi)\big) \Big ) \to \forall \, \xi \leq_\Lambda  \lambda  \ \varphi (\xi).
\]
Although \eca cannot prove transfinite induction for arithmetical formulas, $\aca$ can prove that $\eca$ can prove it up to certain externally determined levels.

\begin{lemma}\label{theorem:acaProvesEcaProvesTI}
Let $\varphi \in \Pi^0_\omega(\Lambda, X)$ be an arithmetical formula. We have that
\[
\aca \vdash {\tt wo}(\Lambda) \to \forall \, \lambda <\Lambda\ [\lambda|\Lambda,X]^\Lambda_\eca {\transin}^{\overline{\Lambda}}_{\omega \cdot \dot \lambda} \big (\varphi(\overline X) \big ).
\]
\end{lemma}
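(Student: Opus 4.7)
The plan is to work inside $\aca$ and run a transfinite induction on $\lambda<_\Lambda\Lambda$. I first unfold the target: by definition, $[\lambda|\Lambda,X]^\Lambda_\eca\psi$ is $\forall P\,(\Provfor_{\eca|\Lambda,X}(P)\to [\lambda]_P\psi)$, where $[\lambda]_P\psi$ is the $\Delta^0_0(P)$ statement $\la\lambda,\psi\ra\in P$. So it suffices to fix an arbitrary $P$ with $\Provfor_{\eca|\Lambda,X}(P)$ (the claim is vacuous if no such $P$ exists) and show by transfinite induction on $\lambda$ that $\la\lambda,{\transin}^{\overline\Lambda}_{\omega\cdot\dot\lambda}(\varphi(\overline X))\ra\in P$. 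This inductive formula is arithmetic with parameter $P$, so Lemma~\ref{theorem:WellOrderingWithComprehensionImpliesTI} makes the required transfinite induction available in $\aca$.

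\textbf{Base case ($\lambda=0$).} The formula ${\transin}^\Lambda_0(\varphi)$ degenerates to an almost trivial sentence which is provable directly in $\eca|\Lambda,X$, so the direct-provability clause of $\Oracleproof$ yields $\la 0,{\transin}^{\overline\Lambda}_0(\varphi(\overline X))\ra\in P$.

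\textbf{Successor step ($\lambda=\mu+1$).} Given the IH $\la\mu,{\transin}^{\overline\Lambda}_{\omega\cdot\mu}(\varphi(\overline X))\ra\in P$, I build a certificate using a single $\omega$-rule application with inner depth $\xi=\mu$. Let $\prog$ abbreviate the progression hypothesis of ${\transin}^{\overline\Lambda}_{\omega\cdot(\mu+1)}$, and let $\psi(n)$ be $\prog\to\varphi(\omega\cdot\mu+\bar n,\overline X)$. For each external $n$, $\la\mu,\psi(\bar n)\ra\in P$ follows from the IH together with $n$ finitary applications of $\prog$ inside $\eca|\Lambda,X$ and the normality of $[\mu]_P$ (Lemma~\ref{kaxiom}). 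The $\omega$-rule with $\xi=\mu$ then gives $\la\mu+1,\prog\to\forall n\,\varphi(\omega\cdot\mu+n,\overline X)\ra\in P$; normality plus one further application of $\prog$ at the endpoint $\omega\cdot(\mu+1)$ delivers $\la\mu+1,{\transin}^{\overline\Lambda}_{\omega\cdot(\mu+1)}(\varphi(\overline X))\ra\in P$.

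\textbf{Main obstacle --- the limit step.} For a limit $\lambda$, a naive single $\omega$-rule at depth $\lambda$ fails: such a rule requires a uniform inner depth $\xi<_\Lambda\lambda$, but the IH only supplies $\la\mu,{\transin}^{\overline\Lambda}_{\omega\cdot\mu}(\varphi(\overline X))\ra\in P$ at depth $\mu$, and $\sup\{\mu:\mu<_\Lambda\lambda\}=\lambda$. I plan to bypass this by exploiting the uniqueness of IPCs (Lemma~\ref{theorem:AcaNaughtProvesUniquenessIPCGivenWO}) together with provable $\Delta^0_0(P)$-completeness: each instance $\la n,{\transin}^{\overline\Lambda}_{\omega\cdot n}(\varphi(\overline X))\ra\in P$ is itself $\Delta^0_0(P)$ and, once established in $\aca$, is formalizable uniformly inside $\eca|\Lambda,X$ at a fixed low depth via an auxiliary formula $\psi(n)$ that unpacks $[\,\cdot\,]_P$ and appeals to the IPC recursion at stage $\lambda$. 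This allows a single $\omega$-rule at some $\xi<_\Lambda\lambda$ to uniformly combine the uniformised IH-instances into ${\transin}^{\overline\Lambda}_{\omega\cdot\lambda}(\varphi(\overline X))$, through the $\eca|\Lambda,X$-provable passage from $(\forall\mu<_\Lambda\lambda\,{\transin}^\Lambda_{\omega\cdot\mu}(\varphi))\wedge\prog$ to $\forall\xi\leq_\Lambda\omega\cdot\lambda\,\varphi(\xi)$. This is the technically most delicate step and where most of the care in the proof will be required.
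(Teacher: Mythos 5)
Your overall skeleton---fix an arbitrary $P$ with $\Provfor_{\eca|\Lambda,X}(P)$ (the claim being vacuous if there is none) and run a transfinite induction on $\lambda$, available in $\aca$ via Lemma~\ref{theorem:WellOrderingWithComprehensionImpliesTI} because the induction formula is arithmetical in $P$---is exactly the paper's: its entire proof says that either there is no IPC and the claim is trivial, or there is a (unique) IPC $P$ and the claim follows ``by an easy induction on $\lambda$''. Your base and successor cases are the standard way to flesh out that induction, and your diagnosis of the limit stage is astute: since the formalized $\omega$-rule demands a \emph{single} inner depth $\xi<_\Lambda\lambda$ uniform in $n$, the induction-hypothesis instances, which sit at depths $\mu$ cofinal in a limit $\lambda$, cannot be fed into one rule application (indeed, any depth-$\lambda$ certificate has all its premises at one $\xi<_\Lambda\lambda$, so at limits $[\lambda]_P$ is exhausted by $\bigcup_{\mu<_\Lambda\lambda}[\mu]_P$). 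The paper's ``easy induction'' is silent on this point.

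However, your proposed repair of the limit step does not work, and this is a genuine gap. First, $P$ is not part of the oracle: the boxed theory is $\eca|\Lambda,X$, whose oracle axioms are only the atomic facts about $\Lambda$ and $X$ (plus the set-existence axiom for the oracle), so the true $\Delta^0_0(P)$ facts $\la\mu,\cdot\ra\in P$ are not axioms of, and cannot be ``formalized uniformly inside'', that theory; there is no auxiliary $\psi(n)$ that can ``unpack $[\cdot]_P$'' under the box, because under the box $P$ simply does not exist as data. Second, even if one enriched the oracle with $P$, knowing inside the box that ${\transin}^{\overline{\Lambda}}_{\omega\cdot\mu}(\varphi(\overline X))$ is \emph{provable} at depth $\mu$ yields the statement itself only via reflection for $[\mu]_P$, which is precisely what is unavailable: reflection of this kind is the strong principle the paper is studying, not something $\eca$ proves. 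Third, uniqueness of IPCs (Lemma~\ref{theorem:AcaNaughtProvesUniquenessIPCGivenWO}) is established in $\aca$ from ${\tt wo}(\Lambda)$; neither ${\tt wo}(\Lambda)$ (a $\Pi^1_1$ statement, not an oracle axiom) nor the requisite $\aca$-reasoning is available to $\eca|\Lambda,X$, so ``the IPC recursion at stage $\lambda$'' cannot be appealed to under the box either. In short: your base and successor steps coincide with the paper's intended argument, but the limit case---which you yourself single out as the delicate one---is left resting on a mechanism that cannot be carried out, so the proposal as it stands does not prove the lemma.
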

\david{Removed the assumption $\lambda>0$, which is both ill-defined and unnecessary. Also, the use of $\omega\cdot\lambda$ may require some mention of multiplicative well-orders.}
\begin{proof}
Reason in $\aca$ and assume ${\tt wo}(\Lambda)$. Either there are no IPCs and the claim holds trivially, or there is a unique IPC, say $P$, and we prove $\forall \, \lambda < \Lambda\ [\lambda|\Lambda,X]_P {\transin}^{\overline{\Lambda}}_{\omega \cdot \dot \lambda}\big(\varphi(\overline{X})\big) $ by an easy induction on $\lambda$. 
\end{proof}

As a consequence, even though \eca cannot prove uniqueness as in Lemma \ref{theorem:unicityTransfiniteRecursion} nor the equivalence of $\widehat{\tt TR}^\Lambda_\lambda(\varphi,X)$ and ${\tt TR}^\Lambda_\lambda(\varphi,X)$ as in Lemma \ref{theorem:Pi11ATRversusATR}, under a box \eca can prove these facts:

\begin{corollary}
Let $\varphi \in \Pi^0_\omega(\Lambda, X)$ be an arithmetical formula. We have that
\begin{enumerate}
\item
$
\begin{aligned}[t]
\aca \vdash & {\tt wo}(\Lambda)  \to \\  
 & \forall \lambda < \Lambda\ [\lambda|\Lambda,X]^\Lambda_\eca \big(\, {\tt TR}^{\overline \Lambda}_{\dot \lambda}(\varphi,Y;\overline X) \wedge {\tt TR}^{\overline \Lambda}_{\dot \lambda}(\varphi,Z;\overline X)  \to  Y\equiv_{\dot \lambda} Z \big);
\end{aligned}
$
\item
$
\begin{aligned}[t]
\aca \vdash &{\tt wo}(\Lambda) \to\\
 &\forall \lambda {\leq_\Lambda} \Lambda\ [\lambda|\Lambda,X]^\Lambda_\eca  \Big(  \widehat{\tt TR}^{\overline \Lambda}_{\dot \lambda}(\varphi,Y;\overline X) \ \leftrightarrow \ {\tt TR}^{\overline \Lambda}_{\dot \lambda}(\varphi,Y;\overline X) \Big).
\end{aligned}
$
\end{enumerate}
\end{corollary}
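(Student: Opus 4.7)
The strategy is to internalize, within the provability operator $[\lambda|\Lambda, X]^\Lambda_\eca$, the proofs of Lemmas \ref{theorem:unicityTransfiniteRecursion} and \ref{theorem:Pi11ATRversusATR}. Those external arguments rest on transfinite induction applied to arithmetical formulas, and Lemma \ref{theorem:acaProvesEcaProvesTI} supplies exactly this resource under the box: reasoning in \aca, assuming ${\tt wo}(\Lambda)$, we have access to the unique IPC $P$ (Lemma \ref{theorem:AcaNaughtProvesUniquenessIPCGivenWO}), and for any $\lambda < \Lambda$, $[\lambda|\Lambda, X]^\Lambda_\eca$ validates transfinite induction up to $\omega\cdot\dot\lambda$ for arithmetical formulas. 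Second-order universal generalization is available under the box, so free set parameters such as $Y$ and $Z$ pose no obstruction beyond bookkeeping.

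For item (1), I would apply Lemma \ref{theorem:acaProvesEcaProvesTI} to the arithmetical formula
\[
\chi(\xi;Y,Z) \ := \ {\tt TR}^{\overline\Lambda}_\xi(\varphi,Y;\overline X)\wedge {\tt TR}^{\overline\Lambda}_\xi(\varphi,Z;\overline X)\to Y\equiv_\xi Z,
\]
yielding ${\transin}^{\overline\Lambda}_{\omega\cdot\dot\lambda}(\chi)$ under the box. Inside the box, the induction step is routine \eca reasoning: if $Y$ and $Z$ both satisfy the restricted recursion and agree strictly below $\xi$, then the defining equivalence at level $\xi$ forces $Y_\xi$ and $Z_\xi$ to coincide, since the recursion formula $\varphi$ only consults the restriction of its set argument below $\xi$. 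Normality of the provability operator (Lemma \ref{kaxiom}) lets me combine ${\transin}$ with the provability of the induction step, and second-order universal generalization over $Y$ and $Z$ yields the form in the statement.

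For item (2), the backward direction ${\tt TR}\to\widehat{\tt TR}$ is proved under the box by appealing to uniqueness: given ${\tt TR}^{\overline\Lambda}_{\dot\lambda}(\varphi,Y;\overline X)$ and any $x$ with $x_0\leq_{\overline\Lambda}\dot\lambda$, item (1) inside the box shows that any alternative $Y'$ witnessing ${\tt TR}^{\overline\Lambda}_{x_0}(\varphi,Y';\overline X)$ agrees with $Y$ below $x_0$, so $\varphi(x_1,Y_{<x_0})\leftrightarrow\varphi(x_1,Y'_{<x_0})$, and the defining equivalence for ${\tt TR}$ converts this into the $\widehat{\tt TR}$ clause at $x$. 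The forward direction $\widehat{\tt TR}\to{\tt TR}$ is a second transfinite induction, this time on the arithmetical formula ${\tt TR}^{\overline\Lambda}_\xi(\varphi,Y;\overline X)$, instantiating the $\widehat{\tt TR}$ hypothesis at each step with $Y$ itself, which by the inductive hypothesis already witnesses the required restricted recursion.

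The main delicate point is to avoid any attempt at transfinite induction on $\widehat{\tt TR}$ directly, which carries an inner $\forall Y$ quantifier and is $\Pi^1_1$, hence outside the scope of Lemma \ref{theorem:acaProvesEcaProvesTI}. The plan is deliberately organized so that all transfinite inductions hit genuinely arithmetical formulas, namely the uniqueness formula $\chi$ and the restricted recursion ${\tt TR}^{\overline\Lambda}_\xi$, while the interaction with $\widehat{\tt TR}$ is purely logical, using item (1) to dispose of the universal set quantifier in its definition. A minor bookkeeping matter is that Lemma \ref{theorem:acaProvesEcaProvesTI} displays its oracle as $\Lambda,X$ whereas our formulas carry additional free set variables $Y$ and $Z$; this is handled either by coding them into the oracle via the pairing conventions of Section \ref{section:FragmentsOfSOA} or by treating them as free second-order parameters and applying generalization after exiting the box.
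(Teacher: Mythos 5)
Your proposal is correct and takes essentially the same route as the paper, whose own proof simply internalizes Lemmas \ref{theorem:unicityTransfiniteRecursion} and \ref{theorem:Pi11ATRversusATR} under the box using the transfinite induction supplied by Lemma \ref{theorem:acaProvesEcaProvesTI}, noting that all inductions are on arithmetical formulas (never on the $\Pi^1_1$ formula $\widehat{\tt TR}$, exactly the delicate point you make explicit) and can be bounded in terms of $\lambda$. One caveat: your step ``instantiate the $\widehat{\tt TR}$ hypothesis with $Y$ itself, which the inductive hypothesis already witnesses'' tacitly reads the antecedent in the definition of $\widehat{\tt TR}$ as the strict ${\tt TR}^{\overline\Lambda}_{<x_0}$ (as in the later $\widetilde{\tt TR}$), since with the displayed non-strict ${\tt TR}^{\overline\Lambda}_{x_0}$ the inductive hypothesis falls one level short at the critical stage and one would instead need a comprehension-built witness of ${\tt TR}^{\overline\Lambda}_{x_0}$, which \eca under the box cannot supply --- so your reading is the one under which the corollary (and the paper's remark that no arithmetical comprehension is used) actually goes through.
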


\begin{proof}
Directly from Lemmas \ref{theorem:acaProvesEcaProvesTI}, \ref{theorem:Pi11ATRversusATR} and \ref{theorem:unicityTransfiniteRecursion}. Note that neither proof uses arithmetical comprehension but rather proceeds by transfinite induction. Clearly, one can bound the amount of transfinite induction by $\lambda$.
\end{proof}

{
We will now see that finite versions of transfinite recursion do not exceed the realm of $\aca$. For $\varphi(x,X) \in {\bm \Pi}^0_\omega$ let us recursively define $\varphi^{(0)}(x) := \bot$ and $\varphi^{(n+1)}(x,X) := \varphi(x,y\in X/\varphi^{(n)}(y))$. Thus, in the inductive step, we substitute any occurrence of $y\in X$ by $\varphi^{(n)}(y)$. Note that $x$ and $y$ are meant to be metavariables here and in particular we allow $x=y$. In a sense, these formulas $\varphi^{(n+1)}(x)$ tell all there is to know about finite recursion. 

\begin{lemma}\label{theorem:FiniteAtrIsInACA}
Let $\varphi(X)\in {\bm \Pi}^0_\omega$ be arithmetical. For each $n\in \omega$ we have that for any well-order $\Lambda$ with order type provably at least $n+1$ that\david{This is not enough. We also need $\eca$ to ``know'' the order type of finite elements.}
\[
\eca \vdash \bigvee_{m\leq n} 
\big( x_0 {=} \overline m \, \wedge \,  \varphi^{(m+1)}(x_1) \big) \ \longleftrightarrow \ \forall X \big ( {\tt TR}^\Lambda_{\overline n+1} (\varphi, X) \wedge x\in X_{<\overline n{+}1}\big).
\]
\end{lemma}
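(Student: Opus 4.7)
The plan is to proceed by external induction on $n$, establishing in tandem the stronger claim that whenever a set $X$ satisfies ${\tt TR}^\Lambda_{\overline n+1}(\varphi,X)$, each level $X_m$ with $m\leq n$ is provably equal to $\{z:\varphi^{(m+1)}(z)\}$. From this auxiliary fact the stated biconditional follows immediately by unpacking $x\in X_{<\overline n+1}$ as $\bigvee_{m\leq n}(x_0=\overline{m}\wedge x_1\in X_m)$ and substituting in the characterization of each $X_m$.

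For the base case $n=0$, the substitution defining $\varphi(z,X_{<0})$ replaces each subformula $y\in X$ by $(y)_0<_\Lambda 0 \wedge y\in X$. Since $\Lambda$ is a well-order, the left conjunct is refutable in $\eca$, so $\varphi(z,X_{<0})$ is provably equivalent to $\varphi(z,X)$ with each $y\in X$ replaced by $\bot$, which is precisely $\varphi^{(1)}(z)$. The recursion equation at level $0$ thus collapses to $X_0=\{z:\varphi^{(1)}(z)\}$, yielding the base case.

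For the inductive step, assume the claim at $n$ and, given a well-order $\Lambda$ of order type at least $n+2$, let $X$ satisfy ${\tt TR}^\Lambda_{\overline{n+1}+1}(\varphi,X)$. Since ${\tt TR}^\Lambda_{\overline n+1}(\varphi,X)$ follows, the induction hypothesis characterizes $X_m$ for $m\leq n$. Applying the recursion equation at level $n+1$ gives $z\in X_{n+1}\leftrightarrow \varphi(z,X_{<\overline n+1})$; expanding the substitution on the right, each restricted membership $(y)_0<_\Lambda \overline{n+1}\wedge y\in X$ may, using the inductive characterization, be replaced by the appropriate disjunction of the $\varphi^{(m+1)}((y)_1)$ for $m\leq n$. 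Folding this back into the definition of $\varphi^{(n+2)}$ then yields the desired identity $X_{n+1}=\{z:\varphi^{(n+2)}(z)\}$.

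The chief obstacle is executing this substitution identity at the inductive step in a way that is faithful to how $\varphi$ accesses the set variable through arbitrary pair encodings; this is handled by a metalevel induction on the build of $\varphi$, unfolding one level of pair-membership at a time. Because $n$ is a fixed standard natural number, only finitely many substitutions are involved, and the whole argument is performed within $\eca$ using $\Delta^0_0$-comprehension with the previously constructed levels as parameters, thereby avoiding any appeal to arithmetical comprehension.
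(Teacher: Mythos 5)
Your route coincides with the paper's: the paper's entire proof is the single line ``By a simple external induction on $n$,'' and your strengthened invariant --- that any $X$ satisfying ${\tt TR}^\Lambda_{\overline n+1}(\varphi,X)$ provably has $z\in X_m\leftrightarrow\varphi^{(m+1)}(z)$ for each $m\leq n$ --- is exactly the content that induction must carry, with your explicit treatment of the pair encoding filling in what the paper leaves implicit. One caveat on that very point: your ``folding back'' step equates $\bigvee_{m\leq n}\bigl((y)_0=\overline m\wedge\varphi^{(m+1)}((y)_1)\bigr)$ with $\varphi^{(n+1)}(y)$, which the paper's literal, flat substitution clause (replace $y\in X$ by $\varphi^{(n)}(y)$) does not give; the identity only holds if the substitution in the definition of $\varphi^{(n+1)}$ is read as respecting the stratification of $X$ into levels. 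That is a defect of the paper's definition rather than of your strategy, but you should make the stratified reading explicit, since under the flat reading the claimed identity already fails at level $1$.

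Two steps of your write-up would fail as written. First, in the base case you refute $(y)_0<_\Lambda\overline 0$ ``since $\Lambda$ is a well-order''; but ${\tt wo}(\Lambda)$ is a $\Pi^1_1$ statement that is not among the hypotheses of the derivability claim, and in any case it says nothing about which element the numeral $\overline 0$ denotes --- $\overline 0$ need not be $<_\Lambda$-minimal. What is needed is that $\eca$ prove $\neg(\xi<_\Lambda\overline 0)$ and $\xi<_\Lambda\overline{m}+1\leftrightarrow\bigvee_{k\leq m}\xi=\overline k$ for $\xi\in|\Lambda|$, i.e.\ that $\eca$ ``know'' the order type of the finite elements; this is precisely the strengthening that the authors' own annotation on the statement declares missing, and your proof assumes it silently instead of stating it. Second, the stated biconditional does not ``follow immediately'' from your invariant: reading the displayed conjunction as the evidently intended implication $\forall X\bigl({\tt TR}^\Lambda_{\overline n+1}(\varphi,X)\to x\in X_{<\overline n+1}\bigr)$, the right-to-left direction is vacuously true whenever no set satisfies ${\tt TR}^\Lambda_{\overline n+1}(\varphi,X)$, and $\eca$ cannot prove such a set exists, since its levels are arithmetically but not $\Delta^0_0$-definable --- that impossibility is the point of the surrounding section. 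What your invariant does yield, and what the paper actually uses in the proof of Lemma \ref{theorem:finiteRecursionCompleteness} (where a recursion set is supplied as a hypothesis and uniqueness is invoked under the box), is the relativized form $\forall X\,\bigl[{\tt TR}^\Lambda_{\overline n+1}(\varphi,X)\to\bigl(x\in X_{<\overline n+1}\leftrightarrow\bigvee_{m\leq n}(x_0=\overline m\wedge\varphi^{(m+1)}(x_1))\bigr)\bigr]$; prove and state that, rather than the unrelativized equivalence.
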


\begin{proof}
By a simple external induction on $n$.
\end{proof}

As a consequence, we can prove a weak version of completeness for second-order information as is expressed in the next lemma.

\begin{lemma}\label{theorem:finiteRecursionCompleteness}
Let $\varphi \in {\Pi}^0_m (X,Y,\Lambda)$ with $m>0$ and $\psi(X,Y,\Lambda) \in{\Pi}^0_l (X,Y,\Lambda)$ be arithmetical. For any natural number $n>0$ we have that 
\begin{enumerate}
\item
$
\begin{aligned}[t]
\eca \vdash & {\tt wo}(\Lambda) \ \to \ \forall X \Big( \big ( {\tt TR}^\Lambda_{\overline n} (\varphi, X) \wedge x\in X_{<\overline n}\big) \ \to  \\
 & \ \ \ \ \ \ \ \ \ \ \  [\overline m\cdot \overline n|Y,\Lambda]^\Lambda_\eca \forall X\ \big ( {\tt TR}^{\overline \Lambda}_{\overline n} (\varphi, X;\overline Y) \to \dot x\in X_{<\overline n}\big)\Big)
\end{aligned}
$\\
and more generally,
\item
$
\begin{aligned}[t]
\eca \vdash & {\tt wo}(\Lambda) \ \to \ \Big( \big ( {\tt TR}^\Lambda_{\overline n} (\varphi, X) \wedge \psi( X_{<\overline n})\big) \ \to  \\
 & \ \ \ \  [\overline m\cdot \overline n + \overline{l}  |Y,\Lambda]^\Lambda_\eca \forall X\ \big ( {\tt TR}^{\overline \Lambda}_{\overline n} (\varphi, X;\overline Y) \to  \psi(X_{<\overline n}, \overline Y, \overline \Lambda)\big) \Big).
\end{aligned}
$
\end{enumerate}
\end{lemma}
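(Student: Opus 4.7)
The plan is to prove Part 2 directly; Part 1 is then the special case $\psi(x,Z) \equiv x\in Z$ with $l=0$. The strategy is to convert the universal statement $\forall X\,({\tt TR}^{\overline\Lambda}_{\overline n}(\varphi,X;\overline Y)\to\psi(X_{<\overline n},\overline Y,\overline\Lambda))$ into an explicit arithmetical formula $\psi^\star$ of complexity $\Pi^0_{m\cdot n+l}$, derive $\psi^\star$ from the premise outside the box, push it inside by arithmetical completeness, and finally translate back to the transfinite recursion form under the box via Lemma \ref{theorem:FiniteAtrIsInACA}.

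For the external step, I reason in \eca and assume ${\tt wo}(\Lambda)$, ${\tt TR}^\Lambda_{\overline n}(\varphi,X)$, and $\psi(X_{<\overline n})$. Define $\psi^\star$ by substituting, in $\psi$, every occurrence of $y\in X_{<\overline n}$ by the disjunction $\bigvee_{k<n}(y_0=\overline k\wedge\varphi^{(k+1)}(y_1))$. A pointwise application of Lemma \ref{theorem:FiniteAtrIsInACA} (equivalently, an external induction on $n$ that unfolds the definition of ${\tt TR}^\Lambda_{\overline n}$ one layer at a time) shows that the premise entails $y\in X_{<\overline n}\leftrightarrow\varphi^{(y_0+1)}(y_1)$ for every $y$ with $y_0<n$; substituting this equivalence into $\psi$ yields $\psi^\star$. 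A routine complexity count using $\varphi\in\Pi^0_m$ gives $\varphi^{(k)}\in\Pi^0_{m\cdot k}$, so $\psi^\star\in\Pi^0_{m\cdot n+l}$.

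For the internal step, apply Lemma \ref{LemmComplete} (and its $\Pi$-dual, obtained by one additional $\omega$-rule application for the outer universal quantifier) together with monotonicity (Lemma \ref{theorem:oracleProvabilityIsMonoton}) to lift $\psi^\star$ under the box, obtaining
\[
[\,\overline{m\cdot n+l}\,|\,Y,\Lambda\,]^\Lambda_\eca\,\psi^\star(\overline Y,\overline\Lambda,\dot x).
\]
Since Lemma \ref{theorem:FiniteAtrIsInACA} is, for each standard $n$, a fixed first-order sentence provable in \eca, $\Sigma^0_1$-completeness of $\Box_\eca$ followed by monotonicity yields the equivalence $\psi^\star\leftrightarrow\forall X({\tt TR}^{\overline\Lambda}_{\overline n}(\varphi,X;\overline Y)\to\psi(X_{<\overline n},\overline Y,\overline\Lambda))$ under the same box; combining with normality (Lemma \ref{kaxiom}) delivers the desired conclusion. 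The main technical obstacle is the quantifier-complexity bookkeeping for $\varphi^{(k)}$: since occurrences of $y\in X$ in $\varphi$ may have either polarity, each substitution step can introduce up to $m$ quantifier alternations in the worst case, and the linear bound $\varphi^{(k)}\in\Pi^0_{m\cdot k}$ has to be verified by external induction on $k$. The factor $m\cdot n+l$ in the statement is a conservative over-estimate (a sharper analysis would yield $\lceil(m\cdot n+l)/2\rceil$), with the extra slack painlessly absorbed by monotonicity.
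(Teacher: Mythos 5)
Your proposal is correct and takes essentially the same route as the paper's proof: collapse ${\tt TR}^\Lambda_{\overline n}$ to the explicit first-order formulas $\varphi^{(k)}$ via Lemma \ref{theorem:FiniteAtrIsInACA}, push the resulting low-complexity formula under the box by provable completeness (Lemma \ref{LemmComplete}) plus monotonicity, and then recover the quantified form under the box from the \eca-provable characterization --- the step the paper compresses into the remark that, under the box, the sets satisfying ${\tt TR}^{\overline\Lambda}_{\overline n}(\varphi,X)$ are unique. Your explicit complexity bookkeeping (including the observation that $\lceil(m\cdot n+l)/2\rceil$ would already suffice, with the slack absorbed by monotonicity) is finer-grained than the paper's sketch, but the underlying argument is the same.
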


\begin{proof}
It is easy to see that given $\varphi \in {\bm \Pi}^0_m$, the formula $\varphi^{(n+1)} \in {\bm \Sigma}^0_{m\cdot n+2}$. Thus, given ${\tt TR}^\Lambda_{\overline n} (\varphi, X)$ inside \eca, we can switch to the first-order equivalent as given by Lemma \ref{theorem:FiniteAtrIsInACA} with low enough complexity so that by provable completeness (Lemma \ref{LemmComplete}) we obtain this first-order equivalence under the box. Since under the box the sets $X$ that satisfy ${\tt TR}^\Lambda_{\overline n} (\varphi, X)$ are unique we obtain the required result. 
\end{proof} 

In the next subsection we shall see that the above lemma can simply be extended to transfinite values of $n$. Clearly, the route via a first-order equivalent of ${\tt TR}^\Lambda_{\overline n} (\varphi, X)$ will no longer work. However, it should not come as a surprise that the lemma can nonetheless be generalized since $\omega$-rules do provide us with a means to talk about infinite conjunctions and disjunctions in a sense.
\subsection{Predicative consistency proves predicative analysis}
\label{section:PredicativeConsistencyProvesPredicativeAnalysis}

In the previous subsection we sketched a strategy for how to prove $\atr$ from the predicative consistency of \eca. Thus, we would like to conclude that, given $x_0$ and $x_1$ there exists $\xi=\xi(x_0,x_1)$ such that the set 
\[
\{ x \mid x_0 \in |\Lambda| \wedge [\xi|\Lambda]_T^\Lambda \, \forall Y\ \big( {\tt TR}^\Lambda_{x_0} (\varphi, Y) \to \varphi(x_1, Y_{<x_0}) \big) \}.
\]
is equal --provably in $\eca + {\tt Pred\mbox{-}O\mbox{-}Cons}(\eca)$--  to the set 
\[
\{ x \mid x_0 \in |\Lambda| \wedge  \forall Y\ \big( {\tt TR}^\Lambda_{x_0} (\varphi, Y) \to \varphi(x_1, Y_{<x_0}) \big) \}.
\]
One inclusion turns out to be provable, in $\aca$, but not the other. However, we shall see that we can prove something very similar to equality.

First, we shall prove a lemma which gives us an estimate for $\xi(x_0,x_1)$. Note that by ${\bm \Sigma}^1_1$-completeness we have that $\eca + {\tt Pred\mbox{-}O\mbox{-}Cons}(\eca)$ proves
\begin{align*}
[\xi(x_0,x_1) |\Lambda]_T^\Lambda \, \forall Y\ \big( {\tt TR}^\Lambda_{x_0} (\varphi, Y) &\to \varphi(x_1, Y_{<x_0}) \big) \ \to\\
 &  \forall Y\ \big( {\tt TR}^\Lambda_{x_0} (\varphi, Y) \to \varphi(x_1, Y_{<x_0}) \big).
\end{align*}
The converse of this implication need not be provable in $\aca$ since the antecedent $\forall Y\ \big( {\tt TR}^\Lambda_{x_0} (\varphi, Y) \to \varphi(x_1, Y_{<x_0}) \big)$ is trivially satisfied when there is no such $Y$ at all. However, we can prove this implication if we know that at least one $Y$ exists satisfying ${\tt TR}^\Lambda_{x_0} (\varphi, Y)$. The next lemma tells us so, and gives us a concrete bound on $\xi(x_0,x_1)$. {The lemma is a generalization of the finite case as proven in Lemma \ref{theorem:finiteRecursionCompleteness}.}}

\begin{lemma}[$\aca$]\label{LemmCompleteTransfin}
Let $\Lambda$ be an additive ordinal and let $\varphi \in \Pi^0_m(X,Z,\Lambda)$ and $\psi\in{\Pi}^0_\ell (X,Z,\Lambda)$. Then,
\begin{align*}
{\tt wo}(\Lambda) \ \to  \forall &Z,X,\lambda<\Lambda\ \Big({\tt TR}^\Lambda_{<\lambda}(\varphi,Z)\wedge\psi(Z_{<\lambda})\\
& \rightarrow \provor {m{\cdot}\lambda+\ell}{X,{\Lambda}}T\, \forall Z\, \big({\tt TR}^{\overline \Lambda}_{\dot \lambda}(\varphi,Z; \overline X)\rightarrow \psi(Z_{<\dot \lambda}, \overline X, \overline \Lambda)\big)\Big).
\end{align*}
\end{lemma}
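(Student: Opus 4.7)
The plan is to proceed by transfinite induction on $\lambda<\Lambda$, generalising the external induction on $n$ used in the finite analogue (Lemma~\ref{theorem:finiteRecursionCompleteness}). Reasoning in $\aca$ under ${\tt wo}(\Lambda)$, I first invoke Lemma~\ref{theorem:AcaNaughtProvesUniquenessIPCGivenWO} to fix the unique iterated provability class $P$ for the oracle $\la X,\Lambda\ra$ (otherwise the conclusion is vacuous by Lemma~\ref{theorem:consistencyImpliesIPC}). With $P$ available, each boxed formula $[\alpha|X,\Lambda]^\Lambda_T\chi$ reduces to the $\Delta^0_0(P)$ assertion $[\alpha]_P\chi$, so that arithmetical transfinite induction inside $\aca$ is strong enough to drive the whole argument.

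For the base case $\lambda=0$, the set $Z_{<0}$ is empty and $\psi(Z_{<0})$ collapses to a $\Pi^0_\ell$ formula carrying no genuine $Z$-dependence; Lemma~\ref{LemmComplete} then delivers the required provability at level $\ell=m\cdot 0+\ell$, and under the box ${\tt TR}^{\overline\Lambda}_0(\varphi,Z)$ forces the initial segment of $Z$ to be empty. For the successor step $\lambda=\lambda_0+1$, I use the ${\tt TR}$-prescription $Z_{\lambda_0}=\{y:\varphi(y,Z_{<\lambda_0})\}$ to replace each atomic occurrence of $\la\lambda_0,y\ra\in Z$ in $\psi$ by $\varphi(y,Z_{<\lambda_0})$, producing a $\Pi^0_{\ell+m}$ formula $\psi'(Z_{<\lambda_0})$ equivalent (modulo ${\tt TR}$) to $\psi(Z_{<\lambda})$. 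Applying the induction hypothesis at $\lambda_0$ with the new complexity $\ell+m$ yields a derivation at level $m\cdot\lambda_0+(\ell+m)=m\cdot\lambda+\ell$, thanks to the additivity of $\Lambda$; the uniqueness of transfinite recursion available under the box (from the corollary following Lemma~\ref{theorem:acaProvesEcaProvesTI}) then lets us translate back into the required conclusion about $\psi(Z_{<\dot\lambda})$.

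The main obstacle is the limit case, where no single $\xi<\lambda$ suffices to capture $\psi(Z_{<\lambda})$ via a single application of the induction hypothesis for the fixed formula $\psi$. My plan is to exploit the recursive clause~\eqref{equation:recursiveDefinitionOracleProvability} at the target level: since $m\cdot\lambda=\sup_{\xi<\lambda}(m\cdot\xi+m)$ in an additive well-order and $\ell$ is finite, every IH-instance at level $m\cdot\xi+\ell$ with $\xi<\lambda$ lies strictly below $m\cdot\lambda+\ell$ and so is available as a subderivation of a single $\omega$-rule applied at the target level. Concretely, I would design a witnessing formula $\theta(n)$ whose parameter $n$ indexes a sufficiently fine approximation $\xi_n<\lambda$ (extracted internally from $P$ via $\aca$'s arithmetic comprehension), verify $[m\cdot\xi_n+\ell|X,\Lambda]^\Lambda_T\theta(\bar n)$ uniformly in $n$ by invoking the IH at $\xi_n$, and then close off using the $\omega$-rule together with normality (Lemma~\ref{kaxiom}) to derive $\forall Z\,({\tt TR}^{\overline\Lambda}_{\dot\lambda}(\varphi,Z)\to\psi(Z_{<\dot\lambda}))$ at level $m\cdot\lambda+\ell$. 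The delicate technical point, which I expect to require the most careful bookkeeping, is to ensure that at no stage does the unfolding call on a subordinate provability level exceeding $m\cdot\lambda+\ell$.
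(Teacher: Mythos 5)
Your proposal contains a genuine gap, and it is located exactly where you change the formula during the induction. In your successor step you replace occurrences of $\la\lambda_0,y\ra\in Z$ in $\psi$ by $\varphi(y,Z_{<\lambda_0})$ and then invoke ``the induction hypothesis at $\lambda_0$ with the new complexity $\ell+m$''. But the induction on $\lambda$ must be carried out \emph{internally}, as an instance of arithmetical transfinite induction in $\aca$ (as you yourself note, after reducing $\provor{\cdot}{X,\Lambda}{T}$ to $\Delta^0_0(P)$ facts about the unique IPC $P$ --- that part of your setup is fine and matches the paper). Internal transfinite induction applies to a \emph{single, fixed} arithmetical formula $\Phi(\lambda)$, whereas your inductive statement has the formula $\psi$ as a parameter that grows at every successor step: a descent through infinitely many successor stages (e.g.\ below $\omega\cdot 2$) produces unfoldings $\psi',\psi'',\dots$ of unbounded quantifier complexity, so the statement ``for all $\psi$ of arbitrary complexity \dots'' cannot be expressed by any fixed arithmetical formula, nor rescued by a partial truth predicate ${\tt True}_{\Pi^0_k}$ for any fixed $k$. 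The same defect undermines your limit case, which is in any event only a plan: the truth of $\psi(Z_{<\lambda})$ at a limit $\lambda$ speaks about all levels below $\lambda$ simultaneously, and you give no actual decomposition of it into numerically indexed facts $\theta(\bar n)$ each provable at level $m\cdot\xi_n+\ell$; ``designing a witnessing formula'' is precisely the missing content.

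The paper's proof avoids both problems by never substituting $\varphi$ into $\psi$. It proves the claim by transfinite induction on $\lambda$ \emph{simultaneously for the finitely many subformulas of $\varphi$ and of $\psi$} (a fixed finite stock, hence a single arithmetical induction formula), with a subsidiary external induction on the build of $\psi$ that just follows the completeness proof of Lemma \ref{LemmComplete}: quantifiers of $\psi$ are peeled off by the $\omega$-rule, costing the finite summand $\ell$. The only genuinely new cases are the atoms $x_0<_\Lambda\lambda\wedge x\in Z$ and their negations, and there the transfinite induction hypothesis is applied at $x_0<_\Lambda\lambda$ with $\psi$ taken to be $\varphi$ itself --- this is why the lemma is stated for the \emph{pair} $\varphi,\psi$ --- yielding a proof at level $m\cdot x_0+m$, which is $\leq m\cdot\lambda$ by additivity since $x_0+1\leq\lambda$. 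Note that this treats successor and limit $\lambda$ uniformly, so no limit-case analysis is needed at all. If you reorganize your argument so that the $Z$-atoms are handled one at a time inside a completeness-style structural induction, with the transfinite IH invoked only for the fixed formula $\varphi$, your correct ingredients (the IPC reduction, the ordinal arithmetic $m\cdot\lambda_0+(\ell+m)=m\cdot\lambda+\ell$, monotonicity) assemble into the paper's proof.
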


\proof
We reason in $\aca$, assume ${\tt wo}(\Lambda)$, fix $Z$ and $X$ and proceed to prove that the claim simultaneously for all subformulas of $\varphi$ and of $\psi$. We use (an arithmetical) induction on $\lambda$ with a subsidiary external induction on the build of $\psi$. The subsidiary induction on $\psi$ simply follows the proof of Lemma \ref{LemmComplete}. We only have to consider two new base cases.

First we consider the case when $\psi$ is of the form $x_0{<_\Lambda} \lambda\wedge  x \in Z$ in which case we have that $\overline {x_0}<_{\overline\Lambda}\overline\lambda$ is simply an axiom of $T|X,\Lambda$. Meanwhile, by the assumption that $x\in Z$ and ${\tt TR}^\Lambda_{\lambda}(\varphi,Z)$ we must have that both $\varphi(x_1,Z_{< x_0}, X)$ and ${\tt TR}^\Lambda_{x_0}(\varphi,Z)$ are  true and by the induction hypothesis applied to $x_0$ we have that
\[
\provor {m\cdot x_0+m}{X,\Lambda}T \, \forall Z\, \big({\tt TR}^{\overline \Lambda}_{\dot x_0}(\varphi,Z; \overline X)\rightarrow \varphi(\dot x_1, Z_{<\dot x_0}, \overline X, \overline \Lambda)\big).
\]
Since $\Lambda$ is additive, we have that $m\cdot x_0+m \in |\Lambda|$.
Since $x_0 +1 \leq \lambda$ it now immediately follows that 
\[
\provor {m\cdot\lambda}{X,\Lambda}T \, \forall Z\, \big({\tt TR}^{\overline \Lambda}_{\dot \lambda}(\varphi,Z; \overline X)\rightarrow (\dot x_0<_{\overline \Lambda} \dot \lambda\wedge  \dot x \in Z) \big)
\]
as was to be shown. The case where $\psi=\neg(x_0{<_\Lambda} \lambda\wedge  x \in Z)$ is proven similarly.
\endproof

In the light of this lemma it makes sense to consider the following definition.

\begin{definition}
Let $\varphi \in \Pi^0_m(W, X,\Lambda)$ be arithmetical. We define a formula $\widetilde{\tt TR}^\Lambda_\lambda(\varphi,A)$ by
\begin{align*}
\forall x \ \bigg (& x_0 \leq_\Lambda  \lambda \to \Big( x \in A \\ &\leftrightarrow \, [m \cdot x_0 |W,\Lambda]^\Lambda_T\, \forall X\ \big( {\tt TR}^{\overline\Lambda}_{<\dot x_0} (\varphi, X;\overline W) \to \varphi(\dot x_1, X_{<\dot x_0}, \overline W, \overline \Lambda) \big) \Big)\bigg).
\end{align*}
We also define $\widetilde{\tt TR}^\Lambda_{<\lambda}(\varphi,A)$ by $\forall\eta<_\Lambda\lambda \ \widetilde{\tt TR}^\Lambda_{\eta}(\varphi,A)$ and $\widetilde{\tt TR}^\Lambda_{\Lambda}(\varphi,A)$ by $\forall\lambda<\Lambda \ \widetilde{\tt TR}^\Lambda_{\eta}(\varphi,A)$
\end{definition}

The notion of $\widetilde{\tt TR}^\Lambda_\lambda(\varphi,A)$ will be useful to us because, as we will see later, it is equivalent to ${\tt TR}^\Lambda_\lambda(\varphi,A)$; however, we only need reflection to construct a set satisfying $\widetilde{\tt TR}^\Lambda_\lambda(\varphi,A)$.

\begin{lemma}\label{LemmTildeExists}
Let $\varphi \in \Pi^0_\omega(X,Y,\Lambda)$ be arithmetical. We have that 
\[
\eca+{\tt Pred\mbox{-}O\mbox{-}Cons}(\eca) \vdash {\tt wo}(\Lambda) \to \forall X \exists Y\  \widetilde{\tt TR}^\Lambda(\varphi,Y;X).
\] 
\end{lemma}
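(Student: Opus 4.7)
The plan is to leverage the fact that predicative oracle consistency provides an iterated provability class (IPC) $P$ for $\eca|X,\Lambda$, which is moreover unique, and then use $P$ as a concrete $\Delta^0_0$ witness for the $\Pi^1_1$ provability operator so that comprehension can construct the desired set.

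First I would set up the workspace: reason in $\eca+{\tt Pred\mbox{-}O\mbox{-}Cons}(\eca)$, which by Lemma \ref{LemmRefComp} already includes $\aca$, fix a well-order $\Lambda$ (which we may assume is additive, working with ${\bm\omega}^\Lambda$ if necessary), and fix a set $X$. Instantiating predicative oracle consistency at $\Lambda$ with oracle $X,\Lambda$ and, say, $\lambda=0$, yields $\consor 0 {X,\Lambda}\eca \top$. By Lemma \ref{theorem:consistencyImpliesIPC} this produces a set $P$ with $\Provfor_{\eca|X,\Lambda}(P)$, and by Lemma \ref{theorem:AcaNaughtProvesUniquenessIPCGivenWO}, since ${\tt wo}(\Lambda)$, this $P$ is the unique IPC.

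The key step is to use the existence of $P$ to replace the $\Pi^1_1$ formula hidden inside $\widetilde{\tt TR}$ by a $\Delta^0_0(P,X,\Lambda)$ one. Let $m$ be such that $\varphi\in\Pi^0_m$ and let $\psi(x)$ abbreviate
\[
\forall X'\ \big({\tt TR}^{\overline\Lambda}_{<\dot x_0}(\varphi,X';\overline X) \ \to \ \varphi(\dot x_1, X'_{<\dot x_0},\overline X,\overline\Lambda)\big).
\]
Using arithmetic comprehension (available from $\aca$), form the set
\[
Y \ = \ \{ x \mid x_0 \leq_\Lambda \Lambda \wedge \provx{m\cdot x_0}{P}\psi(x)\}.
\]
Note that $m\cdot x_0 < \Lambda$ since $\Lambda$ is additive and the bound lies inside $|\Lambda|$, so the second conjunct is a $\Delta^0_0(P,X,\Lambda)$ condition and the comprehension is legal.

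Finally I would verify $\widetilde{\tt TR}^\Lambda(\varphi,Y;X)$. By the definition of oracle provability, $\provor{m\cdot x_0}{X,\Lambda}\eca\psi(x)$ unfolds to $\forall Q\,(\Provfor_{\eca|X,\Lambda}(Q)\to\provx{m\cdot x_0}{Q}\psi(x))$. Since $P$ exists and is the unique such $Q$, this is equivalent to $\provx{m\cdot x_0}{P}\psi(x)$. Consequently, for every $x$ with $x_0\leq_\Lambda\Lambda$,
\[
x\in Y \ \leftrightarrow \ \provx{m\cdot x_0}{P}\psi(x) \ \leftrightarrow \ \provor{m\cdot x_0}{X,\Lambda}\eca\,\psi(x),
\]
which is exactly the condition required by $\widetilde{\tt TR}^\Lambda(\varphi,Y;X)$. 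The main potential obstacle is a purely bookkeeping one, namely matching the complexity parameter $m$ and the oracle tuple $X,\Lambda$ in the definition of $\widetilde{\tt TR}$ with those appearing in the unfolding of $[\,\cdot\,]_P$; all the genuine work has been pre-packaged in the existence and uniqueness lemmas for IPCs.
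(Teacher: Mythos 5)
Your proposal is correct and follows essentially the same route as the paper's own proof: both obtain an IPC $P$ from the instance $\consor 0{X,\Lambda}\eca\top$ of predicative oracle consistency (Lemma \ref{theorem:consistencyImpliesIPC}), invoke uniqueness under ${\tt wo}(\Lambda)$ (Lemma \ref{theorem:AcaNaughtProvesUniquenessIPCGivenWO}, available since the theory proves $\aca$), define $Y$ by comprehension on the condition $\provx{m\cdot x_0}{P}\psi(x)$, and use existence-plus-uniqueness of the IPC to identify this $\Delta^0_0(P)$ condition with the $\Pi^1_1$ operator $\provor{m\cdot x_0}{X,\Lambda}\eca$ occurring in $\widetilde{\tt TR}$. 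The only cosmetic differences are that the paper uses ${\bm\Delta}^0_0$-comprehension where you invoke arithmetical comprehension, and your explicit remarks about additivity of $\Lambda$ and the matching of the bound $m\cdot x_0$ make the bookkeeping more careful than the paper's own sketch.
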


\proof
Since $\eca+{\tt Pred\mbox{-}O\mbox{-}Cons}(\eca) \vdash \aca$ we know that any IPC will be unique given a well-order $\Lambda$. So, we reason in $\eca+{\tt Pred\mbox{-}O\mbox{-}Cons}(\eca)$, assume ${\tt wo}(\Lambda)$ and pick $X$ arbitrary as well as $\lambda \in |\Lambda|$. By $\provor 0{X,\mathord{\Lambda}}\eca\bot\to\bot$ we observe that $\exists P\  \Provfor_{\eca|X,\Lambda}(P)$.

So let $P$ be such an IPC and consider the set
\[
Y:=\{\langle \lambda,x\rangle \mid  \lambda \in |\Lambda| \, \wedge \, \provor{n\lambda} {X,\Lambda}P\ {\forall Y\, \big({\tt TR}_{\dot \lambda}^{\overline\Lambda}(\varphi,Y;X)\to \varphi(\dot x,Y_{<\dot\lambda}},\overline X, \overline \Lambda)\big)\}.
\] 
By ${\bm\Delta}^0_0$-comprehension $Y$ is a set and by definition and the uniqueness of an IPC we conclude that $\widetilde{\tt TR}_{\Lambda}(Z|X)$.\david{I think the ordinal operations will require $\aca$.}
\endproof

\begin{lemma}\label{LemmTildeToTR}
For $\varphi \in \Pi^0_\omega(W, X,\Lambda)$ it is provable in $\eca +  {\tt Pred\mbox{-}O\mbox{-}Cons}(\eca)$ that if $A,W$ are sets and $\Lambda$ is a well-order then $\widetilde{\tt TR}^\Lambda(\varphi,A)$ implies that ${\tt TR}^\Lambda(\varphi,A)$.\david{There was a second lemma before with the opposite implication but we only need this one. Moreover, I removed the $\widehat{\tt TR}$ from the JSL version; we can leave it in the ArXiv as a didactic device, but it actually does not simplify the proofs, since you basically have to prove many things twice.}
\end{lemma}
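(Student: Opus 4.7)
The plan is to work entirely inside $\eca+{\tt Pred\mbox{-}O\mbox{-}Cons}(\eca)$, which by Lemma~\ref{LemmRefComp} extends $\aca$, so that arithmetical comprehension and arithmetical transfinite induction along any well-order $\Lambda$ are freely available (the latter via Lemma~\ref{theorem:WellOrderingWithComprehensionImpliesTI}). Fix $\varphi\in\Pi^0_m(W,X,\Lambda)$, assume ${\tt wo}(\Lambda)$ and $\widetilde{\tt TR}^\Lambda(\varphi,A)$; the goal is to establish ${\tt TR}^\Lambda(\varphi,A)$. I would do this by transfinite induction on $\xi\leq\Lambda$, proving the arithmetical statement ${\tt TR}^\Lambda_{<\xi}(\varphi,A)$ and extending it at stage $\xi$ to ${\tt TR}^\Lambda_\xi(\varphi,A)$, i.e.\ $\forall x\,(x\in A_\xi\leftrightarrow\varphi(x,A_{<\xi}))$.

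For the forward direction $x\in A_\xi\Rightarrow\varphi(x,A_{<\xi})$: the hypothesis $\widetilde{\tt TR}^\Lambda_\xi(\varphi,A)$ yields
\[
[m\cdot\xi\mid W,\Lambda]^\Lambda_{\eca}\,\forall X\bigl({\tt TR}^{\overline\Lambda}_{<\dot\xi}(\varphi,X;\overline W)\to\varphi(\dot x,X_{<\dot\xi},\overline W,\overline\Lambda)\bigr).
\]
The formula under the box is $\Pi^1_1$, so by Lemma~\ref{theorem:ECAoracleConsistencyVersusOracleReflection} the assumption ${\tt Pred\mbox{-}O\mbox{-}Cons}(\eca)$ is equivalent to $\Pi^1_1$ predicative oracle reflection, which lets me extract
\[
\forall X\bigl({\tt TR}^\Lambda_{<\xi}(\varphi,X)\to\varphi(x,X_{<\xi})\bigr);
\]
instantiating $X:=A$ and invoking the inductive hypothesis ${\tt TR}^\Lambda_{<\xi}(\varphi,A)$ gives $\varphi(x,A_{<\xi})$.

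For the backward direction $\varphi(x,A_{<\xi})\Rightarrow x\in A_\xi$: by the inductive hypothesis we already have ${\tt TR}^\Lambda_{<\xi}(\varphi,A)$, so I am in position to apply Lemma~\ref{LemmCompleteTransfin} with the parameter set $A$ playing the role of $Z$, with $\psi(Z_{<\xi}):=\varphi(x,Z_{<\xi})$ of the same complexity class $\Pi^0_m$ as $\varphi$. That lemma produces a proof
\[
\provor{m\cdot\xi+\ell}{W,\Lambda}{\eca}\,\forall Z\bigl({\tt TR}^{\overline\Lambda}_{<\dot\xi}(\varphi,Z;\overline W)\to\varphi(\dot x,Z_{<\dot\xi},\overline W,\overline\Lambda)\bigr),
\]
at which point the defining equivalence in $\widetilde{\tt TR}^\Lambda_\xi(\varphi,A)$ delivers $x\in A_\xi$.

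The main obstacle I anticipate is the ordinal bookkeeping needed to make the nesting bound $m\cdot\xi$ produced on the ``backward'' side of $\widetilde{\tt TR}$ line up exactly with the bound supplied by Lemma~\ref{LemmCompleteTransfin}. Since $\Lambda$ is additive, monotonicity of $\provor{\cdot}{\cdot}{\eca}$ (Lemma~\ref{theorem:oracleProvabilityIsMonoton}) absorbs the finite offset $\ell$, and in fact the natural way to present everything is first to replace $\Lambda$ by $\bm\omega^\Lambda$ (which remains provably well-ordered in $\aca$, cf.\ Section~\ref{section:TransfiniteInductionAndTransfiniteRecursion}) so that the bounds $m\cdot\xi$ and $m\cdot\xi+\ell$ are uniformly below $\Lambda$ and the whole induction closes without edge-case exceptions. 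The limit and successor stages of the transfinite induction are handled identically, since $\widetilde{\tt TR}^\Lambda_\xi$ is defined pointwise in $\xi$ and neither direction of the equivalence uses any structural case split on $\xi$.
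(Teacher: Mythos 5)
Your proposal follows the paper's own proof almost step for step: the same transfinite induction on $\xi$ along $\Lambda$ (available since ${\tt Pred\mbox{-}O\mbox{-}Cons}(\eca)$ yields $\aca$), the forward direction of the biconditional in $\widetilde{\tt TR}^\Lambda_\xi(\varphi,A)$ discharged by ${\bm\Pi}^1_1$ oracle reflection (via Lemma \ref{theorem:ECAoracleConsistencyVersusOracleReflection}) together with the induction hypothesis ${\tt TR}^\Lambda_{<\xi}(\varphi,A)$, and the backward direction discharged by Lemma \ref{LemmCompleteTransfin} applied with $Z:=A$ and $\psi$ the relevant instance of $\varphi$. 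So the route is exactly the paper's.

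The one place where you go beyond the paper --- the ordinal bookkeeping --- is also the one place where your argument as written fails. Lemma \ref{LemmCompleteTransfin} delivers provability at nesting depth $m\cdot\xi+\ell$, whereas the biconditional defining $\widetilde{\tt TR}^\Lambda_\xi(\varphi,A)$ certifies $x\in A$ only upon provability at depth exactly $m\cdot x_0=m\cdot\xi$. Monotonicity (Lemma \ref{theorem:oracleProvabilityIsMonoton}) goes the opposite way: it inflates nesting depth, taking $\provor{\lambda'}{X}{T}\varphi$ to $\provor{\lambda}{X}{T}\varphi$ for $\lambda'<_\Lambda\lambda$, so it cannot shrink $m\cdot\xi+\ell$ down to $m\cdot\xi$; and replacing $\Lambda$ by ${\bm\omega}^\Lambda$ only guarantees that the bounds remain inside $|\Lambda|$ --- it does not reverse monotonicity. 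The honest repair is to align the depths rather than invoke monotonicity: either take the depth in the definition of $\widetilde{\tt TR}$ to be $m\cdot x_0+\ell$ (equivalently $m\cdot(x_0+1)$; this is harmless both for the existence result, Lemma \ref{LemmTildeExists}, and for the reflection direction, which works at every depth below $\Lambda$), or sharpen Lemma \ref{LemmCompleteTransfin} to the bound $m\cdot\lambda$, which its own proof essentially delivers, since the inner induction only invokes depths $m\cdot x_0+m=m\cdot(x_0+1)\leq m\cdot\lambda$ for $x_0<\lambda$. In fairness, the paper's proof silently performs the same alignment --- it quotes Lemma \ref{LemmCompleteTransfin} at depth $m\cdot x_0$ and with the strict subscript ${\tt TR}^{\overline\Lambda}_{<\dot x_0}$, neither of which is literally what that lemma states --- so your instinct to flag the mismatch was sound; only your proposed mechanism points in the wrong direction.
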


\begin{proof} We will prove the more general claim that $\widetilde{\tt TR}^\Lambda_\lambda(\varphi,A)$ implies that ${\tt TR}^\Lambda_\lambda(\varphi,A)$ for all $\lambda<\Lambda$. Fix $\Lambda,A,W$ and $\varphi$ and proceed by transfinite induction on $\lambda<\Lambda$. Note that since all sets are fixed, this induction can be done in $\aca$.

So, assume that $\widetilde{\tt TR}^\Lambda_\lambda(\varphi,A)$ holds and, by induction, that for all $\eta<\lambda$, ${\tt TR}^\Lambda_\eta(\varphi,A)$. Suppose that $x=\langle \eta,n\rangle$ with $\eta\leq \lambda$. If $\eta< \lambda$, then we already have ${\tt TR}^\Lambda_{\eta} (\varphi, A)$ by the induction hypothesis, and thus $\forall x \ \big( x\in A\leftrightarrow \varphi(x,A_{<\eta})\big) $, so we may assume $\eta=\lambda$.

If $x\in A$, it is immediate by ${\bm \Pi}^1_1$-reflection that $\forall X\ \big( {\tt TR}^\Lambda_{<\lambda} (\varphi, X) \to \varphi(n, X_{<\lambda}) \big).$ But by the induction hypothesis, ${\tt TR}^\Lambda_{<\lambda} (\varphi, A)$ holds; therefore, in particular, $\varphi(n, A_{<\lambda})$ holds as well.

Conversely, if we have that $\varphi(n,A_{< \lambda})$, we wish to appeal to Lemma \ref{LemmCompleteTransfin} to conclude that $x\in A$. By the induction hypothesis, $A_{< \lambda}$ satisfies ${\tt TR}^{\Lambda}_{<\lambda}(\varphi,A_{< \lambda})$. Thus from Lemma \ref{LemmCompleteTransfin}, there is $m\in\mathbb N$ such that
\[[m \cdot x_0 |W,\Lambda]^\Lambda_T\, \forall X\ \big( {\tt TR}^{\overline\Lambda}_{<\dot x_0} (\varphi, X;\overline W) \to \varphi(\dot x_1, X_{<\dot x_0}, \overline W, \overline \Lambda) \big),\]
and hence $x\in A$. But we have now shown that, for all $x=\langle \eta,n\rangle$ with $\eta\leq \lambda$, $x\in A\leftrightarrow \varphi(n,A_{< \eta})$, that is, ${\tt TR}^\Lambda_\lambda(\varphi,A)$.
\end{proof}

We can now finally combine all our previous results and formulate the main theorem of this section.

\begin{theorem}\label{LemmRFNATR}
$\eca+{\tt Pred\mbox{-}O\mbox{-}Cons}({\eca}) \vdash {\rm ATR}_0$.
\end{theorem}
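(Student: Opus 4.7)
The proof should be essentially a direct assembly of the machinery built up in this section. The plan is to recognize that all the hard work has already been done in Lemmas \ref{LemmTildeExists} and \ref{LemmTildeToTR}, and that the only remaining task is to verify that $\atr$'s axioms follow.

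First I would observe that $\eca + {\tt Pred\mbox{-}O\mbox{-}Cons}(\eca)$ proves $\aca$: by Lemma \ref{theorem:ECAoracleConsistencyVersusOracleReflection} predicative oracle consistency gives us $0\text{-}\ORFN^{\bm 1}_\eca[{\bm \Sigma}^0_1]$ (since ${\bm \Sigma}^0_1 \subseteq {\bm \Pi}^1_1$ and ${\bm 1}$ is a well-order), and then Lemma \ref{LemmRefComp} delivers $\aca$. This lets me freely use arithmetical comprehension and transfinite induction for arithmetical formulas in the remainder of the argument. Also, $\atr$ is axiomatized (over $\eca$) by induction for sets together with the scheme ${\tt TR}\text{-}{\bm \Pi}^0_\omega$, and the induction axiom is already available in $\eca \subseteq \aca$.

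Next I would reason in $\eca + {\tt Pred\mbox{-}O\mbox{-}Cons}(\eca)$, fix an arithmetical formula $\varphi \in {\bm \Pi}^0_\omega$, a set parameter $W$, and a well-order $\Lambda$ with ${\tt wo}(\Lambda)$. Without loss of generality $\Lambda$ may be assumed additive, since otherwise one replaces it by ${\bm\omega}^\Lambda$ which remains a well-order provably in $\aca$, as noted in Section \ref{section:TransfiniteInductionAndTransfiniteRecursion}. By Lemma \ref{LemmTildeExists} applied to $\varphi$, $\Lambda$, and $W$, there exists a set $Y$ such that $\widetilde{\tt TR}^\Lambda(\varphi,Y;W)$ holds. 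Then by Lemma \ref{LemmTildeToTR}, this same $Y$ satisfies ${\tt TR}^\Lambda(\varphi,Y;W)$. Since $\Lambda$, $W$, and $\varphi$ were arbitrary, this establishes every instance of ${\tt TR}\text{-}{\bm \Pi}^0_\omega$, which together with $\Robinson + {\tt Ind}$ is precisely the axiomatization of $\atr$.

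I do not anticipate any serious obstacle, since the two key technical lemmas have already been proved. The only minor point to check is the free set parameters in the ${\tt TR}$ axiom: the statement of $\atr$ allows arbitrary set parameters, but Lemma \ref{LemmTildeExists} already accommodates this via the parameter $W$, and if more parameters are needed one codes them into a single set using the pairing conventions from Section \ref{section:FragmentsOfSOA}. Thus the proof reduces to little more than citing Lemmas \ref{LemmRefComp}, \ref{LemmTildeExists}, and \ref{LemmTildeToTR} in sequence.
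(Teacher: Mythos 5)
Your proof is correct and takes essentially the same route as the paper, whose own proof likewise notes that $\eca+{\tt Pred\mbox{-}O\mbox{-}Cons}(\eca)\vdash\aca$ and then concludes immediately from Lemmas \ref{LemmTildeExists} and \ref{LemmTildeToTR}. You merely make explicit some details the paper leaves implicit: the derivation of $\aca$ via Lemmas \ref{theorem:ECAoracleConsistencyVersusOracleReflection} and \ref{LemmRefComp}, the passage to an additive well-order ${\bm\omega}^\Lambda$ needed for Lemma \ref{LemmCompleteTransfin}, and the coding of multiple set parameters into one.
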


\proof
Since $\eca+{\tt Pred\mbox{-}O\mbox{-}Cons}({\eca}) \vdash \aca$ we have access to all the reasoning over $\aca$ and the result follows immediately from Lemmas \ref{LemmTildeExists} and \ref{LemmTildeToTR}.\david{I made some huge changes in this section since the proofs were incorrect and glossed over important steps. I removed many results from the JSL version which were interesting but not essential for our goal.}
\endproof

\section{Countable coded $\omega$-models and reflection}\label{section:CountableCodedModels}

Our goal in this section is to derive a converse of Theorem \ref{LemmRFNATR}; in fact, we will even show that ${\rm ATR}_0$ extends ${\tt Pred\mbox{-}O\mbox{-}RFN}_{{\rm ACA}}[{\bm\Pi}^1_2]$. The main tool for this task will be the
notion of a \emph{countable coded $\omega$--model}. In what follows we shall
discuss existence results for $\omega$--models and the  satisfaction
definitions associated to them. First we briefly recall the
definition and basic properties of these models (we refer to
\cite[chapters VII and
VIII]{Simpson:2009:SubsystemsOfSecondOrderArithmetic} for a more detailed account of this topic).

\subsection{$\omega$--Models and satisfaction definitions}

Let us denote by $\flang$ the language of first-order arithmetic. A
structure for $\slang$, $\model=\langle \nmodel, {\mathcal S}\rangle$,
is given by an $\flang$--structure $\nmodel$ together with a
family ${\mathcal S}$ of subsets of the universe of $\nmodel$. An $\omega$--model is just an $\slang$--structure $\model=\langle \nmodel, {\mathcal S}\rangle$ where $\nmodel$ is the standard $\flang$--structure with universe $\omega$. Therefore, in order to fully describe an $\omega$--model it is enough to provide a subset $\mathcal S$ of ${\mathcal P}(\omega)$. This motivates the following definition within $\rca$.

\begin{definition}[$\rca$] A {\em countable coded $\omega$--model} is a set $\model\subseteq \mathbb N$ viewed as a code for a countable sequence of subsets of $\mathbb N$, $\{\modelpar n\mid n \in \mathbb N\}$, 
where for each $n\in \mathbb N$, $\modelpar n=\{i\mid \langle n,i\rangle\in \model \}.$
\end{definition}

A satisfaction  notion can be associated to each countable coded
$\omega$--model in a rather natural way. To this end we introduce some auxiliary concepts.

Working in $\rca$, for each  countable coded $\omega$--model $\model$, we denote by $\mlang$  the language obtained by adding to $\slang$ two sequences of new constant symbols:
\[\{c_n\mid n\in \mathbb N\}\mbox{ and  }\{C_n\mid n\in \mathbb{N}\}.\]
The constants $C_n$ are second-order and are used as names for the sets of the sequence coded by $\model$. On the other hand each constant $c_n$ will be interpreted as $n$, so by primitive recursion it can be shown within $\rca$ that there is a function $\val:\term\to \mathbb N$ that associates to each
closed first-order term $t$ of $\mlang$ its value $\val(t)$ under the usual
interpretation for the symbols of $\mlang$.\david{Le quit\'e a val la dependencia en $\model$.}

Now, always working within $\rca$, let $\snt$ denote the set of sentences of
$\mlang$. In what follows,  $\Gamma$ will denote  a set of $\mlang$--formulas closed under taking subformulas and substituting terms.

\begin{definition}[\rca]
Let $\model$ be a countable coded $\omega$--model. A
\emph{(full) satisfaction definition for $\model$} is a set $\sat\subseteq \mlang$ which obeys the usual recursive clauses of Tarski's truth definition, where each constant $c_n$ is interpreted as $n$ and constants $C_n$ are interpreted using
$\modelpar n$. In particular, for every $t\in \term$ and $n\in \mathbb N$,
\[
\begin{array}{rll}
(t\in C_n) \in \sat &\Leftrightarrow &\val(t)\in \modelpar n;\\
(\neg\varphi)\in\sat&\Leftrightarrow &\varphi\not\in\sat;\\
(\varphi_1\wedge \varphi_2) \in \sat & \Leftrightarrow & \varphi_1\in \sat
\mbox{ and }
\varphi_2 \in \sat;\\
(\forall u\,\varphi(u)) \in \sat & \Leftrightarrow &  \mbox{for all }n\in
\mathbb{N},\ \varphi(c_n)\in \sat;\\
(\forall X\,\varphi(X)) \in \sat & \Leftrightarrow &  \mbox{for all }n\in
\mathbb{N},\ \varphi(\modelpar n)\in \sat.
\end{array}
\]
We may assume other Booleans and quantifiers are defined in terms of $\neg,\wedge,\forall$.
We say that $\model$ is a \emph{full $\omega$--model} if
there exists a full satisfaction definition for $\model$.
\end{definition}

It can be  shown in $\rca$ that if $\model$ is a countable coded $\omega$--model, then there exists a (unique) partial satisfaction definition for $\Delta_0^0$-formulas in $\model$ (that is, more complex formulas are not necessarily assigned a truth value). Nevertheless, existence  of full satisfaction definition requires a stronger theory, such as $\atr$; uniqueness, on the other hand, does not require such a strong base theory. 


\begin{lemma}[$\rca$]
Let $\model$ be a  countable coded $\omega$--model. Then, there is at most one full satisfaction definition in $\model$.
\end{lemma}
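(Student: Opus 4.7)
The natural strategy is to argue by induction on the complexity of formulas, using only the existence of the sets $\sat_1,\sat_2$ and $\Delta^0_0$-comprehension. Assume for contradiction that $\sat_1$ and $\sat_2$ are both full satisfaction definitions for $\model$ and consider the $\Delta^0_1$-definable set
\[
A \;=\; \{\varphi \in \snt \mid (\varphi\in \sat_1)\leftrightarrow (\varphi\in \sat_2)\},
\]
which exists in $\rca$ by $\Delta^0_1$-comprehension. Our goal is to prove $\snt\subseteq A$.

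First I would introduce the logical-complexity measure $|\varphi|$ that counts the number of occurrences of $\neg,\wedge,\forall u,\forall X$ in $\varphi$. This is a primitive recursive function on formula codes, so its graph is available in $\rca$. The key point is that substituting a constant symbol $c_n$ or $C_n$ for a variable does not change the complexity, so for a quantified formula $\forall u\,\psi(u)$ (respectively $\forall X\,\psi(X)$) each instance $\psi(c_n)$ (respectively $\psi(C_n)$) has strictly smaller complexity.

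Then I would prove by $\Pi^0_1$-induction on $n$ (available in $\rca$) the statement
\[
\forall \varphi\in\snt\ \big(|\varphi|\le n \ \to\ \varphi\in A\big).
\]
The base case $n=0$ handles the atomic sentences: for $t=s$, $t<s$, or $t\in C_n$ the value $\val(t)$ and the set $\modelpar n$ already pin down the truth value uniquely via the Tarskian clauses, so both $\sat_1$ and $\sat_2$ must agree on atomic sentences. The inductive step splits into four cases according to the outermost connective of $\varphi$. For $\varphi=\neg\psi$ and $\varphi=\psi_1\wedge\psi_2$ the Tarskian clauses together with the induction hypothesis applied to $\psi,\psi_1,\psi_2$ (all of strictly smaller complexity) give $\varphi\in A$ at once. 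For $\varphi=\forall u\,\psi(u)$ and $\varphi=\forall X\,\psi(X)$, the induction hypothesis applies uniformly to every instance $\psi(c_n)$ or $\psi(C_n)$, and the universal clauses of both $\sat_1$ and $\sat_2$ then force $\varphi\in \sat_1\leftrightarrow \varphi\in \sat_2$.

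The only potential obstacle is a bookkeeping one: one must check that the induction formula really is of complexity at most $\Pi^0_1$ once $\sat_1$ and $\sat_2$ are given as parameters, so that $\rca$ has enough induction. Since membership in $\sat_i$ and in $\snt$ is decidable relative to the given sets, the formula above is indeed $\Pi^0_1$, which is well within the reach of $\Sigma^0_1$-induction in $\rca$. This yields $\snt = A$, i.e.\ $\sat_1=\sat_2$ on $\snt$, which is the desired uniqueness.
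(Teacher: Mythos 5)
Your proof is correct, and since the paper states this lemma without proof, your argument is precisely the intended standard one: form the agreement set $A$ by $\Delta^0_1$-comprehension and show $\snt\subseteq A$ by induction on a primitive recursive complexity measure, checking that the induction statement is $\Pi^0_1$ in the parameters $\sat_1,\sat_2$ and hence available in $\rca$ (which proves $\Pi^0_1$-induction from $\Sigma^0_1$-induction, and also proves $|\cdot|$ total). Your observation that substituting $c_n$ or $C_n$ for a variable preserves the complexity measure is exactly the point that makes the quantifier cases go through, so there is no gap.
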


\begin{definition}[$\rca$]
  Let $\model$ be a countable coded $\omega$--model and let  $\varphi$ be a
  \emph{sentence}   of  $\mlang$. We say that $\model$ \emph{is a full $\omega$--model of $\varphi$} if there is a full satisfaction definition $Sat$ in $\model$ such that $\varphi\in Sat$, in which case we write $\model\models\varphi$. We say that $\model$ is a model of a set of  formulas $\Phi$  of $\mlang$ if, for every $\theta\in \Phi$, $\model$ is a model of the universal closure of $\theta$. 
\end{definition}

\begin{lemma}\label{model-reflection}
Let $\varphi(X_1,\dots,X_m,v_1,\dots,v_m)\in \Pi_\infty^0(X_1, \ldots, X_m)$ with all variables shown. We have that \rca proves:

For every full\david{Joost added specific notation for the satisfaction class, but I think this is not needed in view of the definition of {\em full} $\omega$-model.} countable coded $\omega$--model $\model$ and natural numbers $a_1,\hdots, a_n,$ $b_1,\hdots,  b_m$, we have that $
\varphi(\modelpar{a_1},\dots,\modelpar{a_n},b_1,\dots,b_m)$ holds if and only if $\model$ satisfies $\varphi(C_{a_1},\dots,C_{a_n},c_{b_1},\dots,c_{b_m})$.
\end{lemma}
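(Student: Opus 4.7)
The plan is to proceed by an external induction on the build of the arithmetic formula $\varphi$. Since the lemma is a meta-theorem---we produce one proof in $\rca$ per formula $\varphi$---no internal induction on formula complexity is required, which is fortunate because $\rca$ does not have access to a general truth predicate. Recall that by hypothesis $\varphi \in \Pi^0_\infty(X_1,\ldots,X_n)$, so there are no second-order quantifiers and only the clauses for Booleans and first-order quantifiers in the satisfaction definition will intervene.

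For the base case, fix an atomic subformula. If it does not mention any $X_i$, then after substituting $c_{b_j}$ for $v_j$ we obtain a closed arithmetic sentence; by primitive recursion in $\rca$, the function $\val$ evaluates the closed terms to their intended values, and the satisfaction clause for equality and $<$ agrees with the standard interpretation. If the atomic formula is of the shape $t \in X_i$, then after substitution it becomes $t(c_{b_1},\ldots,c_{b_m}) \in C_{a_i}$, and by the clause
\[
(t \in C_{a_i}) \in \sat \ \Leftrightarrow\ \val(t) \in \modelpar{a_i},
\]
this is equivalent to $t(b_1,\ldots,b_m) \in \modelpar{a_i}$, which is exactly what $(t \in X_i)(\modelpar{a_i}, b_1,\ldots,b_m)$ asserts.

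For the inductive step, assume the claim for all proper subformulas of $\varphi$ and distinguish cases according to the main connective. If $\varphi = \neg \psi$ or $\varphi = \psi_1 \wedge \psi_2$, apply the corresponding recursive clauses of $\sat$ and invoke the induction hypothesis on $\psi$, $\psi_1$, $\psi_2$. If $\varphi = \forall u\, \psi(u, \vec X, \vec v)$, then by the universal quantifier clause,
\[
\varphi(C_{a_1},\ldots,C_{a_n},c_{b_1},\ldots,c_{b_m}) \in \sat
\ \Leftrightarrow\ \forall n\in \mathbb N\ \big(\psi(c_n, C_{a_1},\ldots,C_{a_n},c_{b_1},\ldots,c_{b_m}) \in \sat \big),
\]
and by the induction hypothesis applied to $\psi$ for each $n$, this is equivalent to $\forall n\, \psi(n, \modelpar{a_1},\ldots,\modelpar{a_n}, b_1,\ldots,b_m)$, that is, to $\varphi(\modelpar{a_1},\ldots,\modelpar{a_n},b_1,\ldots,b_m)$.

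There is no real obstacle here: the argument is a routine induction on formula complexity and the only care needed is bookkeeping of the substitution of first-order constants $c_{b_j}$ for the free variables $v_j$ and of the second-order constants $C_{a_i}$ for the set variables $X_i$. The fact that the induction is external, together with the uniqueness of full satisfaction classes (stated just before this lemma), ensures that the argument goes through already in $\rca$.
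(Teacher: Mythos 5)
Your proof is correct and follows essentially the same route as the paper, whose proof is exactly the one-line remark that the claim is ``straightforward by (external) induction on the syntactical complexity of $\varphi$, using the Tarskian truth conditions''; you have simply filled in the routine details. One small point of care: in the $\forall u$ case the induction hypothesis must be taken as a single $\rca$-provable statement with the witnessing numbers as free (internal) variables, so that it can be instantiated at every internal $n$ rather than only at standard ones---your argument is compatible with this reading, but it is worth making explicit.
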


\begin{proof}
  Straightforward by (external) induction on the syntactical complexity of the
  formula $\varphi$, using the Tarskian truth conditions.
\end{proof}

\begin{lemma}[$\rca$]\label{sat-full-induction} Let $\model$ be a full countable
  coded $\omega$--model. Then,
  \[\model\models \Robinson+ {\tt I}\Sigma^1_\omega.\]
\end{lemma}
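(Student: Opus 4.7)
The plan is to verify each axiom of the theory $\Robinson + {\tt I}\Sigma^1_\omega$ separately. The axioms of $\Robinson$ are all true $\Pi^0_\omega$-sentences about $\mathbb N$ (involving only $+,\times,<,\tt 0,\tt 1$ and bounded quantification), and since $\model$ is an $\omega$--model, Lemma~\ref{model-reflection} applied to each of them shows that they all belong to the full satisfaction definition $\sat$ for $\model$. So the real content is showing $\model \models {\tt I}\Sigma^1_\omega$.

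For induction, I would fix an arbitrary formula $\varphi(v,\vec X,\vec y)$ of $\slang$ and an interpretation of its parameters by indices $\vec a$ and numerals $\vec b$, and write $\theta(v) := \varphi(v,C_{\vec a},c_{\vec b}) \in \mlang$. What we must show is that $\model$ satisfies the universal closure of the induction axiom for $\varphi$, which, using the defining clauses of $\sat$ for $\wedge, \to$ and $\forall$, together with $\val(c_n)=n$ and the fact that substitution of numerals is primitive recursive, reduces to the following implication: if $\theta(c_{0})\in\sat$ and for every $n\in\mathbb N$, $\theta(c_{n})\in\sat$ implies $\theta(c_{n+1})\in\sat$, then for every $n\in\mathbb N$ we have $\theta(c_{n})\in\sat$. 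The key move is that this reduction is now a purely arithmetical statement about the set $\sat$, so we can argue in $\rca$.

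With $\sat$ as a set parameter, the predicate ``$\theta(c_{n})\in\sat$'' is $\Delta^0_0(\sat)$, so $\Delta^0_0$-comprehension (a consequence of $\Delta^0_1$-comprehension in $\rca$) produces the set
\[
A := \{\, n\in\mathbb N : \theta(c_{n})\in\sat\,\}.
\]
The hypothesis gives $0\in A$ and $\forall n\,(n\in A\to n+1\in A)$, so the set-based induction axiom ${\tt Ind}$, which is part of $\rca$, yields $\forall n\,(n\in A)$. By the Tarski clause for $\forall$, this says precisely $\forall v\,\theta(v)\in\sat$, as required. Note that since the argument uses ${\tt Ind}$ applied to a set built from the parameter $\sat$, this works uniformly for every second-order formula $\varphi$, with or without set parameters, which is why we obtain full ${\tt I}\Sigma^1_\omega$ and not merely its lightface restriction.

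The only delicate step is the first one, namely the translation of the induction axiom interpreted in $\model$ into the purely syntactic condition on $\sat$ used above; this is not deep, but requires keeping careful track of the Tarski clauses and of the identification of $\val(c_n + 1)$ with $n+1$ so that the induction hypothesis ``$\theta(v)\to\theta(v+1)$'' gets correctly unpacked as a statement about the successor of the index $n$. Once this is in place, the rest is a single application of $\Delta^0_0$-comprehension followed by the induction axiom of $\rca$.
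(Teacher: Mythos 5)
Your proof is correct and takes essentially the same approach as the paper's: the axioms of $\Robinson$ are verified via Lemma~\ref{model-reflection}, and each induction instance is reduced through the Tarski clauses to ordinary induction on the arithmetical predicate ``$\theta(c_n)\in\sat$'' with $\sat$ as a set parameter (your extra care about $\val(c_n+1)=n+1$ is a point the paper glosses over). The only cosmetic difference is that you form the set $A$ by comprehension and then apply ${\tt Ind}$, whereas the paper applies $\Sigma^0_1$-induction directly to the $\Delta^0_1$ formula $\varphi(c_x,C_b)\in \sat$; these are interchangeable moves over $\rca$.
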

\begin{proof}
Reasoning in $\rca$, let $Sat$ denote a   partial satisfaction definition for $\Gamma$ in $\model$.

Since $\Robinson$ is axiomatized by true $\Pi_1^0$--formulas, it follows from Lemma \ref{model-reflection} that $\model\models
\Robinson$. Now let $\varphi(u,X)\in \Gamma$ such  that for some $b$,
\[\model\models \varphi(0,\modelpar b) \land \forall u\,\big( \varphi(u,\modelpar b) \to
\varphi(u+1,\modelpar b)\big).\] 
Then
\[\theta(0,b,\varphi,Sat)\land \forall x\,\big(\theta(x,b,\varphi,Sat)\to
\theta(x+1,b,\varphi,Sat)\big),\]
where 
$\theta(x,b,\varphi,Sat)$ is the $\Delta^0_1$--formula $ \varphi(c_x,C_b)\in Sat.$
Since $\rca$ contains $\Sigma^0_1$ induction we see that $\forall
x\,\theta(x,b,\varphi,Sat)$ and, as a consequence,  $\model\models\forall
u\,\varphi(u,\modelpar b)$.
\end{proof}

\subsection{$\omega$--Models of $\aca$}

The following result will be very useful to us; see \cite[Theorem VIII.1.13]{Simpson:2009:SubsystemsOfSecondOrderArithmetic}.

\begin{proposition}[$\atr$]\label{PropACA0Mod} For each $X\subseteq \mathbb N$ there exists a unique, smallest, full countable coded $\omega$-model $\model$ such that $X\in\model$ and $\model\models \aca$. We will denote this model by $\model[X]$.
\end{proposition}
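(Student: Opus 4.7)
My proof plan is based on the classical construction of the minimal $\omega$-model of $\aca$ containing $X$ as the collection of sets arithmetical in $X$, i.e., the closure of $\{X\}$ under the Turing jump.

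Reasoning in $\atr$, fix $X\subseteq\mathbb N$. First, I would use arithmetic transfinite recursion along the well-order ${\bm\omega}$ to produce a single set $H$ coding the sequence $\langle X^{(n)}:n\in \omega\rangle$ of iterated Turing jumps of $X$; the recursive clause ``$H_{n+1}$ is the Turing jump of $H_{n}$'' is arithmetical in $H_{<n+1}$, so ${\tt TR}^{\bm\omega}\mbox{-}{\bm\Pi}^0_\omega$ applies and this is within the strength of $\atr$. Fixing a natural enumeration $\{\varphi_e(u,Y):e\in\mathbb N\}$ of arithmetical formulas with one number and one set variable, I would then define the countable coded $\omega$-model $\model=\model[X]$ by
\[
\modelpar{\langle e,n\rangle} \ = \ \{i\in\mathbb N:\varphi_e(i,X^{(n)})\},
\]
using $\bm\Delta^0_0$-comprehension on $H$. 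This uses that any set arithmetical in $X$ is arithmetical in some $X^{(n)}$ and can thus be read off from $H$.

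Next, I would verify that $X\in\model$ (take $\varphi_e(u,Y)\equiv u\in Y$ and $n=0$) and $\model\models\aca$. For the latter, I would use Lemma \ref{model-reflection} and the fact that the sets coded by $\model$ are closed under arithmetical definability: if $\psi$ is arithmetical with parameters among the $\modelpar k$, then $\{i:\psi(i)\}$ is arithmetical in some $X^{(m)}$, hence appears as some $\modelpar{\langle e,n\rangle}$. Induction (for arbitrary formulas in the sense of $\aca$) holds in $\model$ by Lemma \ref{sat-full-induction} once the satisfaction class is in hand.

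The main obstacle, and the step that truly needs $\atr$ rather than $\aca$, is the construction of a full satisfaction definition $\sat$ for $\model$. I would build $\sat$ by arithmetic transfinite recursion along an appropriate well-order on $\mlang$-formulas that majorizes quantifier-alternation depth (e.g., iterating the partial $\Sigma^0_n$- and $\Sigma^1_n$-satisfaction classes in $H$ along $\bm\omega$). Since in an $\omega$-model the second-order quantifiers range over the countable family $\{\modelpar n:n\in\mathbb N\}$ coded by $\model$, the Tarskian clause for $\forall X\,\varphi(X)$ becomes $\forall n\,\varphi(C_n)\in \sat$, which is arithmetical in the previously built partial $\sat$. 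Thus the iteration is captured by ${\tt TR}\mbox{-}\bm\Pi^0_\omega$. Uniqueness of such a $\sat$ follows from the lemma already stated in the excerpt.

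Finally, for minimality and uniqueness of $\model[X]$: any $\omega$-model $\model'$ with $X\in\model'$ and $\model'\models\aca$ must, by the internal jump lemma for $\aca$, contain $X^{(n)}$ for each $n\in\mathbb N$, and hence every set arithmetical in some $X^{(n)}$, i.e., every set coded by $\model[X]$. Thus $\model[X]$ is contained in every such $\model'$, which immediately yields both that it is the smallest and that it is unique with this property.
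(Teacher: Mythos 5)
Your proposal is correct and matches the argument the paper itself relies on: the paper offers no in-text proof of this proposition but cites \cite[Theorem VIII.1.13]{Simpson:2009:SubsystemsOfSecondOrderArithmetic}, whose proof is precisely your construction --- the jump hierarchy along $\bm\omega$ obtained from ${\tt TR}$-${\bm\Pi}^0_\omega$, the model of all sets arithmetical in $X$, a full satisfaction definition built by arithmetical transfinite recursion on quantifier rank, and minimality via the fact that any $\omega$-model of $\aca$ containing $X$ contains each true jump $X^{(n)}$ by absoluteness of arithmetical formulas (Lemma \ref{model-reflection}). One small repair: the set $\{i:\varphi_e(i,X^{(n)})\}$ for an \emph{arbitrary} arithmetical $\varphi_e$ is not literally ${\bm\Delta}^0_0$ (nor even uniformly arithmetical) in $H$, since a uniform arithmetical truth predicate is not arithmetical; instead index the columns of $\model$ by $\Sigma^0_1$ formulas relative to $X^{(n)}$, whose truth is uniformly decided by the column $X^{(n+1)}$ of $H$ --- nothing is lost, as every $\Sigma^0_k(X)$ set is $\Sigma^0_1(X^{(k-1)})$.
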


In view of Lemma \ref{sat-full-induction}, we immediately obtain the following:

\begin{corollary}[$\atr$]\label{CorACAMX}
Proposition \ref{PropACA0Mod} remains true if we replace $\aca$ by $\rm ACA$ (with full induction). In fact, we already have that $\model[X]\models{\rm ACA}$.
\end{corollary}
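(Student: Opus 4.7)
The plan is to verify this corollary as a direct consequence of the preceding material, with essentially no new work required. By Proposition \ref{PropACA0Mod}, for each $X$ we already have a full countable coded $\omega$-model $\model[X]$ containing $X$ with $\model[X]\models\aca$. The only difference between $\aca$ and ${\rm ACA}$ is that the latter upgrades the single set-induction axiom ${\tt Ind}$ to the full induction scheme ${\tt I}{\bm\Pi}^1_\omega$ while retaining the same comprehension axioms, so the task reduces to checking that $\model[X]$ already validates this stronger induction scheme.

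This is immediate from Lemma \ref{sat-full-induction}: since $\model[X]$ is full by construction, the lemma hands us $\model[X]\models \Robinson+{\tt I}{\bm\Sigma}^1_\omega$. As ${\bm\Sigma}^1_\omega$ and ${\bm\Pi}^1_\omega$ coincide as classes of formulas (both being $\bigcup_n{\bm\Sigma}^1_n=\bigcup_n{\bm\Pi}^1_n$, i.e. all second-order formulas), this yields ${\tt I}{\bm\Pi}^1_\omega$ in $\model[X]$. Combined with the ${\bm\Pi}^0_\omega$-comprehension already guaranteed by $\model[X]\models\aca$, we conclude $\model[X]\models{\rm ACA}$.

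For the minimality and uniqueness clauses of the analogue of Proposition \ref{PropACA0Mod}, observe that every full countable coded $\omega$-model of ${\rm ACA}$ is a fortiori a full $\omega$-model of $\aca$; hence Proposition \ref{PropACA0Mod} forces $\model[X]$ to be contained in any such model. Together with $\model[X]\models{\rm ACA}$ just established, this identifies $\model[X]$ as the unique smallest full countable coded $\omega$-model of ${\rm ACA}$ containing $X$. There is no real obstacle in this argument: the content of the corollary is precisely that fullness of the satisfaction class automatically lifts the axiom ${\tt Ind}$ to the full induction scheme, which is exactly what Lemma \ref{sat-full-induction} asserts.
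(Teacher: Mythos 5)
Your proposal is correct and matches the paper's own argument, which derives the corollary immediately from Lemma \ref{sat-full-induction}: fullness of $\model[X]$ yields $\model[X]\models\Robinson+{\tt I}\Sigma^1_\omega$, upgrading ${\tt Ind}$ to full induction on top of the comprehension already given by Proposition \ref{PropACA0Mod}. Your explicit treatment of the minimality and uniqueness clauses (any full $\omega$-model of $\rm ACA$ is one of $\aca$, so $\model[X]$ remains smallest) is a correct spelling-out of what the paper leaves implicit.
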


Previous results on countable coded $\omega$--models can be extended to
theories with an oracle as described at the end of Section 1. We only must
fix the  interpretation of the oracle. For each countable coded
$\omega$--model we will adopt the following convenion: the oracle
$\mathfrak{O}$ will always be interpreted using $\modelpar 0$. Similarly, we can
assume that in $\model[X]$, we have $X=\modelpar 0$.

\begin{lemma}[$\omega$--model soundness]\label{LemmOmSoundACA} The following is provable in $\atr$. Suppose that $T$ is representable, $X\subseteq \mathbb N$, $\Lambda$ is a well-order, $\model$ is a full $\omega$-model for $T$ with $\modelpar 0\equiv X$ and $\lambda\in|\Lambda|$ is such that $[\lambda|X]_{T}^\Lambda\varphi$. Then, $\model\models \varphi.$
\end{lemma}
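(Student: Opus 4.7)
The strategy is transfinite induction along $\Lambda$ on the recursive structure of oracle provability. Work in $\atr$ and fix $T$, $X$, $\Lambda$, $\model$, $\lambda$, $\varphi$ as in the hypothesis. First, apply $\Pi^0_\omega$-transfinite recursion---available in $\atr$ because the right-hand side of \eqref{equation:recursiveDefinitionOracleProvability} is arithmetic in $P_{<\xi}$---to construct a set $P$ with $\Provfor_{T|X}(P)$. Since $\atr$ extends $\aca$, Lemma \ref{theorem:AcaNaughtProvesUniquenessIPCGivenWO} yields uniqueness of such $P$, and so the hypothesis $\provor\lambda X T\varphi$ (a universal quantification over all IPCs) collapses to $\provx{\lambda}{P}\varphi$.

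Let $\sat$ be a full satisfaction class witnessing that $\model$ is a full $\omega$-model of $T$. Because $\mathfrak O$ is interpreted via $\modelpar 0$ and we assume $\modelpar 0 \equiv X$, every axiom of the expansion $T|X$ is true in $\model$: the literal assertions $\mathfrak O(\overline n)$ or $\neg\mathfrak O(\overline n)$ hold according to membership in $X$, and the axiom $\exists Y\,\forall x\,(x\in Y\leftrightarrow\mathfrak O(x))$ is witnessed by $\modelpar 0$ itself. We then prove by transfinite induction on $\xi\in|\Lambda|$ the claim
\[
\Phi(\xi) \;:=\; \forall \chi\,\big(\provx{\xi}{P}\chi \;\rightarrow\; \chi\in\sat\big),
\]
which is arithmetic in the set parameters $P$ and $\sat$, so the induction is legitimate by Lemma \ref{theorem:WellOrderingWithComprehensionImpliesTI}. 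For the inductive step, unpack the clause \eqref{equation:recursiveDefinitionOracleProvability}: if $\provx{\xi}{P}\chi$ then either (i) $\Box_{T|X}\chi$, in which case a standard soundness argument (induction on the length of the $T|X$-derivation, using the Tarskian biconditionals of $\sat$ and the fact that $\model$ satisfies every axiom of $T|X$) delivers $\chi\in\sat$; or (ii) there exist $\eta<_\Lambda\xi$ and $\psi$ with $\Box_{T|X}(\forall x\,\psi(x)\to\chi)$ and $\forall n\,\provx{\eta}{P}\psi(\overline n)$. The induction hypothesis yields $\psi(\overline n)\in\sat$ for every $n$; since $\model$ has domain $\omega$ and $\sat$ is full, the universal Tarskian clause forces $(\forall x\,\psi(x))\in\sat$, whereupon a final application of soundness to the derivation witnessing $\Box_{T|X}(\forall x\,\psi(x)\to\chi)$ produces $\chi\in\sat$. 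Setting $\xi=\lambda$ and $\chi=\varphi$ finishes the proof.

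The main obstacle is purely formalizational rather than conceptual: we must verify that the construction of $P$ fits within $\Pi^0_\omega$-transfinite recursion (immediate from the arithmetic form of the defining clause), and that the classical soundness argument for first-order predicate logic against a Tarskian satisfaction class can be carried out internally in our base theory. The latter is the familiar induction on the formal length of a derivation, which is available already in $\rca$ once the Tarskian clauses for $\sat$ are in hand, and it is precisely this step where the fullness of the $\omega$-model (needed to handle the universal clause arising from the $\omega$-rule) is essential.
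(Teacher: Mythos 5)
Your proposal is correct and follows essentially the same path as the paper's proof: fix the full satisfaction definition, obtain an iterated provability class $P$ for $T|X$ (you rightly make its existence in $\atr$ explicit via arithmetical transfinite recursion along $\Lambda$, a step the paper leaves implicit in ``let $P$ be an iterated provability class''), and establish $\forall\chi\,\big(\provx{\xi}{P}\chi\to\chi\in\sat\big)$ by arithmetical transfinite induction on $\xi$, handling the $\omega$-rule case through the fullness of $\sat$. The only superfluous move is the appeal to Lemma~\ref{theorem:AcaNaughtProvesUniquenessIPCGivenWO}: since $\provor{\lambda}{X}{T}\varphi$ universally quantifies over all iterated provability classes, instantiating it at your constructed $P$ already yields $\provx{\lambda}{P}\varphi$, so no uniqueness is needed.
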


\begin{proof}
Let us  fix a full satisfaction definition in $\model$, $Sat$, and let $P$ be an iterated provability class for $T$. Then it is enough to show that\[\forall X\,\forall \lambda< \Lambda\,\forall \varphi\,\big(\provx\lambda P\varphi\to \varphi\in Sat\big).\]
But this can be easily derived by fixing $X$ and proving by transfinite induction that we have
\[\forall \lambda < \Lambda \,\forall \varphi\,\big (\provx\lambda P\varphi\to \varphi\in Sat \big). \qedhere\]
\end{proof}

\subsection{Proving predicative reflection}

We are almost ready to state and prove our main theorem. We only need to prove the following lemma.

\begin{lemma}\label{LemmaATRRFN} Let $T\subseteq \slang$ be any formal theory such that it is provable in $\atr$ that every set $X$ can be included in a full $\omega$-model for $T$. Then, $\atr\vdash {\tt Pred\mbox{-}O\mbox{-}RFN}_{T}[{\bm \Pi}^1_2]$. \label{LemmATRRFN3}
 \end{lemma}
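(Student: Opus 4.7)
The plan is to reason inside $\atr$, fix a well-order $\Lambda$, $\lambda\in|\Lambda|$, sets $X_1,\ldots,X_n$, number $x$, and $\varphi(\vec X,u)\in{\bm\Pi}^1_2$ with ${\sf FV}(\varphi)\subseteq\{X_1,\ldots,X_n,u\}$, and assume $\provor\lambda{\vec X}T\varphi(\overline{\vec X},\dot x)$. Write $\varphi$ in the form $\forall Y\,\exists Z\,\theta(Y,Z,\vec X,u)$ with $\theta\in{\bm\Pi}^0_\omega$ arithmetical; the goal is to derive $\forall Y\,\exists Z\,\theta(Y,Z,\vec X,x)$.

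I would then fix an arbitrary set $Y$ and form a single auxiliary set $A$ that simultaneously codes the tuple $(X_1,\ldots,X_n)$ and the witness $Y$, using the standard pairing trick. Invoking the hypothesis on $T$, obtain a full countable coded $\omega$-model $\model$ of $T$ with $A\in\model$. By Corollary \ref{CorACAMX} and Lemma \ref{sat-full-induction}, $\model$ is closed under arithmetic comprehension, so we may arrange by re-indexing that $\modelpar 0$ is precisely the set $A'=\{\langle i,u\rangle : u\in X_i\}$ encoding the oracle tuple for $\vec X$, while $Y=\modelpar a$ for some natural number $a$ (both $A'$ and $Y$ being arithmetic in $A$).

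The key application is then Lemma \ref{LemmOmSoundACA} ($\omega$-model soundness) to $\model$ with the oracle $A'$: since the interpretation of $\mathfrak O$ in $\model$ matches the oracle used on the provability side, from the assumed $\provor\lambda{\vec X}T\varphi(\overline{\vec X},\dot x)$ we get $\model\models\forall Y'\,\exists Z'\,\theta^{*}(Y',Z',\bar x)$, where $\theta^{*}$ replaces each $u\in X_i$ by $\mathfrak O(\langle i,u\rangle)$. Instantiating the outer universal quantifier with $C_a$ yields $b$ with $\model\models\theta^{*}(C_a,C_b,\bar x)$. Since $\theta$ is arithmetical and the oracle literals in $\theta^{*}$ correctly compute membership in the $X_i$'s, Lemma \ref{model-reflection} transfers this satisfaction back to the real world: $\theta(Y,\modelpar b,\vec X,x)$ holds. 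Hence $\exists Z\,\theta(Y,Z,\vec X,x)$, and since $Y$ was arbitrary, $\varphi(\vec X,x)$ follows.

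The main obstacle is bookkeeping with the oracle: one must ensure that the chosen $\omega$-model interprets $\mathfrak O$ precisely as the tuple $\vec X$ (so that soundness applies verbatim), while still making the arbitrary test set $Y$ available inside the same model. Both needs are met by absorbing $Y$ into the single parameter $A$ used to pick $\model$ and then recovering the relevant components arithmetically inside $\model$. A secondary point worth flagging is that model reflection (Lemma \ref{model-reflection}) is only stated for $\bm\Pi^0_\omega$, which is why the argument handles the outer second-order quantifiers of $\varphi$ externally (one universal instantiation in the real world, one existential witness extracted from the model) and only invokes Lemma \ref{model-reflection} on the arithmetical matrix $\theta$.
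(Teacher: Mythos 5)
Your overall strategy coincides with the paper's: fix an arbitrary test set for the outer universal quantifier of the ${\bm \Pi}^1_2$ formula, arrange for it to live inside a full countable coded $\omega$-model of $T$ together with the oracle data, apply $\omega$-model soundness (Lemma \ref{LemmOmSoundACA}), and transfer the arithmetical matrix $\theta$ back to the real world via (the oracle-aware reading of) Lemma \ref{model-reflection}. The one genuine difference is \emph{where} the universal second-order quantifier is instantiated. The paper first shifts the oracle: from $[\lambda|A]_T^\Lambda\,\varphi(\overline A,\dot a)$ it derives $[\lambda|A,B]_T^\Lambda\,\exists Z\,\theta(\overline A,\overline B,Z,\dot a)$, instantiating $\forall Y$ \emph{under the box} with the new oracle component, and only then applies soundness to this instance; consequently the only set that must be correctly interpreted in the model is $\modelpar 0$ (the code of the oracle tuple), which is exactly what the hypothesis on $T$ supplies. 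You instead keep the oracle fixed at $\vec X$, apply soundness to the full ${\bm\Pi}^1_2$ sentence, and instantiate $\forall Y'$ \emph{inside the model} with $C_a$ — which requires the test set $Y$ to literally occur as a section $\modelpar a$ of the model, something the hypothesis does not directly provide.

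This is where your proof has a hole as written. The justification you give — that $\model$ is closed under arithmetic comprehension ``by Corollary \ref{CorACAMX} and Lemma \ref{sat-full-induction}'' — is misattributed: Corollary \ref{CorACAMX} concerns the minimal $\omega$-model of $\aca$, not full $\omega$-models of an arbitrary $T$, and Lemma \ref{sat-full-induction} yields full induction, not comprehension. For the lemma as literally stated ($T$ an arbitrary formal theory), a full $\omega$-model containing the combined parameter $A$ need contain neither the oracle code $A'$ nor $Y$ as members (consider a model whose family of sets consists essentially of $A$ alone), so the re-indexing step fails and with it your instantiation with $C_a$. The step is repairable whenever $\eca\subseteq T$: then $\model\models{\bm\Delta}^0_0$-$\compax$, and $A'$ and $Y$, being ${\bm\Delta}^0_0$-definable from $A$, are members of the model; this covers every application in the paper, since Theorem \ref{TheoMain} assumes $\eca\subseteq T$. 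In fairness, the paper's own oracle-shifting step also implicitly uses a little comprehension in $T$ (to instantiate $\forall Y$ with the oracle section under the box), but it never needs derived sets to be members of the $\omega$-model, which is precisely the point at which your variant requires the extra, unjustified closure assumption.
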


\proof 
Work in $\atr$. Let $\Lambda$ be such that ${\tt wo}(\Lambda)$; we must show that
\[\forall \lambda\in |\Lambda|,\ \lambda\text{-}{\tt OracleRFN}^\Lambda_{\rm ACA}[\bm\Pi_2^1].\]
Let $\varphi(X,x)\in {\bm \Pi_2^1}$ be such that for some set $A\subseteq \mathbb{N}$, $a\in \mathbb{N}$ and $\lambda\in |\Lambda|$, $[\lambda| A]_{T}^\Lambda \varphi(\bar{A}, \dot{a}).$ Since $\varphi(X,x)\in {{\bm \Pi}_2^1}$, we
can assume that $\varphi(X,x)$ is of the form $\forall Y\,\exists
Z\,\theta(X,Y,Z,x)$ for some formula  $\theta (X,Y,Z,x)\in \Pi_\omega^0$.

Let $B\subseteq \mathbb{N}$. We shall show that $\theta(A,B,C,a)$ holds for some set $C$. It follows from $[\lambda| A]_T^\Lambda \varphi(\bar{A}, \dot{a})$, that  
$[\lambda| A,B]_T^\Lambda \exists Z\,\theta(\bar{A},\bar{B},Z, \dot{a})$.
Hence, since we are working in $\atr$, there exists the least countable coded $\omega$--model $\model=\model[A,B]$ of $T$ containing $A$ and $B$, so that $A=\modelpar 0$ and
 $B=\modelpar 1$. By $\omega$--model soundness, $\model\models \exists Z\,\theta(\bar{A},\bar{B},Z,
c_a)$, and, since $B$ was arbitrary, we conclude that $\varphi$ is true.
\endproof

We may now summarize our results in our main theorem.

\begin{theorem}\label{TheoMain}
Let $U,T$ be computably enumerable theories such that $\eca\subseteq U\subseteq \atr$, $\eca\subseteq T$ and such that $\atr$ proves that any set $X$ can be included in a full $\omega$-model for $T$. Then,
\begin{equation}\label{EqMain}
\atr\equiv U+{\tt Pred\mbox{-}O\mbox{-}Cons}(T)\equiv U + {\tt Pred\mbox{-}O\mbox{-}RFN}_T[{\bm \Pi}^1_2].
\end{equation}
\end{theorem}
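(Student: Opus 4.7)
The plan is to prove the three equivalences in \eqref{EqMain} by closing a cycle of implications, each of which reduces to a result already established in the paper.

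First, I would observe that $U + {\tt Pred\mbox{-}O\mbox{-}RFN}_T[{\bm \Pi}^1_2] \vdash U + {\tt Pred\mbox{-}O\mbox{-}Cons}(T)$ is essentially trivial: the sentence $\bot$ (identified with ${\tt 0=1}$) lies vacuously in ${\bm \Pi}^1_2$, so predicative oracle consistency is simply the instance of predicative oracle reflection for this sentence. Hence one direction of both equivalences is immediate.

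Second, I would show $U + {\tt Pred\mbox{-}O\mbox{-}Cons}(T) \vdash \atr$. Since $\eca \subseteq T$, any $T|X$-proof is also a $T|X$-proof with more axioms available than $\eca|X$ has, so $[\lambda|X]^\Lambda_\eca \bot \to [\lambda|X]^\Lambda_T \bot$ is provable already in $\eca$ (a routine point using the definition of oracle-provability from Section~\ref{section:NestedOracleOmegaRules}). Contrapositively, ${\tt Pred\mbox{-}O\mbox{-}Cons}(T) \to {\tt Pred\mbox{-}O\mbox{-}Cons}(\eca)$. Combining this with $\eca \subseteq U$, we get
\[
U + {\tt Pred\mbox{-}O\mbox{-}Cons}(T) \ \vdash \ \eca + {\tt Pred\mbox{-}O\mbox{-}Cons}(\eca),
\]
and Theorem~\ref{LemmRFNATR} then gives $\atr$.

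Third, I would show $\atr \vdash U + {\tt Pred\mbox{-}O\mbox{-}RFN}_T[{\bm \Pi}^1_2]$. The inclusion $U \subseteq \atr$ gives $\atr \vdash U$ directly. For the reflection component, the hypothesis that $\atr$ proves every set $X$ can be included in a full $\omega$-model for $T$ is precisely what is required to invoke Lemma~\ref{LemmaATRRFN}, yielding $\atr \vdash {\tt Pred\mbox{-}O\mbox{-}RFN}_T[{\bm \Pi}^1_2]$. This closes the cycle, establishing all three equivalences in \eqref{EqMain}.

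The main obstacle is not really in the theorem itself but in having set up all the machinery correctly beforehand: Theorem~\ref{LemmRFNATR} (which does the heavy lifting in the $(\Leftarrow)$ direction via the $\widetilde{\tt TR}$ construction) and Lemma~\ref{LemmATRRFN3} (which uses $\omega$-model soundness for the $(\Rightarrow)$ direction). Given these, the only subtlety in the present theorem is the careful bookkeeping to ensure the hypotheses match: that $\eca \subseteq T$ allows transferring consistency from $T$ to $\eca$, and that the $\omega$-model hypothesis on $T$ is applicable within $\atr$. Both verifications are direct and require no new ideas beyond those already deployed.
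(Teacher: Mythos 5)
Your cycle of implications (reflection $\vdash$ consistency; consistency $\vdash \atr$; $\atr \vdash U$ plus reflection) is exactly the paper's decomposition, and your first and third steps are correct and match the paper's proof, which likewise disposes of them by noting $\bot\in{\bm\Pi}^1_2$ and by citing Lemma \ref{LemmaATRRFN} with the $\omega$-model hypothesis. The genuine gap is in your second step: the claim that $\provor{\lambda}{X}{\eca}\bot \to \provor{\lambda}{X}{T}\bot$ is ``routine from the definition'' treats $\provor{\lambda}{X}{T}$ as an ordinary proof predicate, monotone in the axiom set. It is not. By Definition \ref{definition:OracleRuleWithSecondOrderVariable}, $\provor{\lambda}{X}{T}\varphi$ is $\forall P\,(\Provfor_{T|X}(P)\to \provx{\lambda}{P}{\varphi})$, a universal quantification over iterated provability classes, and the IPCs for $\eca|X$ and for $T|X$ are different objects satisfying different fixpoint conditions, so neither implication between the two provability predicates follows formally. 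Concretely, $\provor{\lambda}{X}{\eca}\bot$ holds \emph{vacuously} in any model containing no IPC for $\eca|X$, while $\provor{\lambda}{X}{T}\bot$ can simultaneously fail if that model does contain an IPC for $T|X$ omitting $\langle\lambda,\bot\rangle$; and manufacturing an IPC for $\eca|X$ out of one for $T|X$ is essentially a transfinite recursion along $\Lambda$, i.e., of $\atr$-strength, so it is not available in the base theory where you claim the implication. The paper flags precisely this asymmetry after Lemma \ref{theorem:consistencyImpliesIPC}: provability statements are weak (possibly vacuous), while consistency statements are strong --- $\consor{\lambda}{X}{T}\top$ yields an IPC, but only for $T|X$ itself. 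The monotonicity you want does hold \emph{conditionally}: given ${\tt wo}(\Lambda)$ and IPCs $P$ for $T|X$ and $Q$ for $\eca|X$, arithmetic transfinite induction gives $\provx{\lambda}{Q}{\varphi}\to\provx{\lambda}{P}{\varphi}$; what is missing, and what ${\tt Pred\mbox{-}O\mbox{-}Cons}(T)$ does not supply, is the existence of $Q$.

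The repair --- and the intended reading of the paper's own one-line proof, which also just cites Theorem \ref{LemmRFNATR} --- is not to transfer consistency from $T$ down to $\eca$ at all, but to observe that the proof of Theorem \ref{LemmRFNATR} is uniform in the theory: all supporting results (Lemma \ref{theorem:consistencyImpliesIPC}, the completeness Lemma \ref{LemmComplete} and Theorem \ref{theorem:PiOneOneCompletenessForOracleProvability}, the consistency/reflection equivalence of Lemma \ref{theorem:oracleConsistencyVersusOracleReflection}, the comprehension Lemma \ref{LemmRefComp}, and Lemmas \ref{LemmCompleteTransfin}, \ref{LemmTildeExists} and \ref{LemmTildeToTR}) are either already stated for an arbitrary representable theory or relativize verbatim to any c.e.\ $T\supseteq\eca$. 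Thus ${\tt Pred\mbox{-}O\mbox{-}Cons}(T)$ directly yields IPCs for $T|X$, ${\bm\Pi}^1_1$-reflection for $T$, hence $\aca$, and then $\widetilde{\tt TR}$ carried out with $T$-provability in place of $\eca$-provability, giving $\eca+{\tt Pred\mbox{-}O\mbox{-}Cons}(T)\vdash\atr$; since $\eca\subseteq U$, also $U+{\tt Pred\mbox{-}O\mbox{-}Cons}(T)\vdash\atr$. With this substitution your second step closes and the remainder of your argument stands as written.
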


\proof
It is obvious that the third theory is at least as strong as the second, and the other inclusions are Theorem \ref{LemmRFNATR} and Lemma \ref{LemmaATRRFN}. 
\endproof

The following is then immediate in view of Corollary \ref{CorACAMX}:

\begin{corollary}\label{CorMain}
Let $\mathcal G=\{\eca,\rcaa,\rca,\aca\}$. Then, \eqref{EqMain} holds whenever $U\in \mathcal G\cup\{\atr\}$ and $T\in \mathcal G\cup \{{\rm ACA}\}.$
\end{corollary}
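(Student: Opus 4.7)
The plan is to observe that Corollary \ref{CorMain} is simply an instantiation of Theorem \ref{TheoMain} to the listed theories, so my proof would consist of verifying, for every admissible pair $(U,T)$, the two hypotheses of Theorem \ref{TheoMain}: namely that $\eca\subseteq U\subseteq \atr$, and that $\eca\subseteq T$ together with $\atr$ proving that every set $X$ sits in a full $\omega$-model of $T$.

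First I would check the condition on $U$. By definition each theory in $\mathcal G=\{\eca,\rcaa,\rca,\aca\}$ extends \eca, and each is a subtheory of $\aca$ (hence of $\atr$): $\rcaa$ and $\rca$ differ from \eca only in strengthening comprehension from ${\bm\Delta}^0_0$ to ${\bm\Delta}^0_1$ or in upgrading induction to ${\tt I}{\bm\Sigma}^0_1$, both of which are absorbed by the ${\bm\Pi}^0_\omega$-$\compax$ and full induction of $\aca$; and for $U=\atr$ itself the inclusions are trivial. So the left-hand hypothesis of Theorem \ref{TheoMain} is met uniformly.

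Next I would handle the condition on $T$. Again each $T\in\mathcal G\cup\{\rm ACA\}$ clearly extends \eca. The remaining point is full-$\omega$-model existence. Corollary \ref{CorACAMX}, proved immediately above in the paper, furnishes in $\atr$ and for every $X\subseteq\mathbb N$ a full countable coded $\omega$-model $\model[X]$ containing $X$ such that $\model[X]\models{\rm ACA}$. Since ${\rm ACA}$ is the strongest theory occurring as $T$ in the corollary, and since $\omega$-model satisfaction is downward monotone in the theory, the same $\model[X]$ is a full $\omega$-model of every weaker $T\in\mathcal G$, hence the hypothesis on $T$ holds uniformly for every $T$ in the allowed list.

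With both hypotheses verified, Theorem \ref{TheoMain} applies and delivers the three-way equivalence \eqref{EqMain} for each admissible pair $(U,T)$, which is precisely the content of Corollary \ref{CorMain}. There is no real obstacle here; the only mildly subtle point is the second verification, where one must invoke Corollary \ref{CorACAMX} rather than Proposition \ref{PropACA0Mod} directly, because the strongest instance $T={\rm ACA}$ uses full induction, but that step is already done in the excerpt.
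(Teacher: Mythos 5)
Your proof is correct and takes essentially the same approach as the paper, which derives the corollary as an immediate instantiation of Theorem \ref{TheoMain} via Corollary \ref{CorACAMX}, exactly the route you identify (including the point that one needs \ref{CorACAMX} rather than Proposition \ref{PropACA0Mod} to cover $T={\rm ACA}$). Your explicit checks of the inclusions $\eca\subseteq U\subseteq\atr$ and of the downward monotonicity of $\omega$-model satisfaction simply spell out details the paper leaves as routine.
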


Although the following is well-known, it is an interesting consequence of our main result:

\begin{corollary}
$\atr$ is finitely axiomatizable.
\end{corollary}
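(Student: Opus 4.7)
The plan is to apply Corollary \ref{CorMain} with the choice $U = T = \aca$. Under this choice, Corollary \ref{CorMain} yields
\[
\atr \ \equiv \ \aca + {\tt Pred\mbox{-}O\mbox{-}Cons}(\aca),
\]
so it suffices to show that the right-hand side is finitely axiomatizable. By Theorem \ref{TheoFinAx}, $\aca$ itself admits a finite axiomatization, so the task reduces to checking that ${\tt Pred\mbox{-}O\mbox{-}Cons}(\aca)$ is a single formula of $\slang$ rather than an infinite schema.

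The crucial observation is that, as set up in Section \ref{section:PredicativeReflectionAndConsistency}, the principle ${\tt Pred\mbox{-}O\mbox{-}Cons}(T)$ is defined as one $\Pi^1_2$-style sentence: it begins with the universal quantifiers $\forall\Lambda\,\forall\lambda{\in}|\Lambda|$ over codes of well-orders and their elements, and its matrix is $\lambda\mbox{-}\OCons^{\Lambda}_T$, which in turn unfolds to $\neg\provor{\lambda}{X}{T}\bot$ with the only parameters being $\Lambda,\lambda,X$, all of which are bound at the outermost level. In particular, the predicate $\provor{\lambda}{X}{T}$ is a uniform $\Pi^1_1$ formula defined once and for all in Definition \ref{definition:OracleRuleWithSecondOrderVariable}, and for a fixed representable theory $T$ (such as $\aca$) no external syntactic parameters remain. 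Hence adjoining ${\tt Pred\mbox{-}O\mbox{-}Cons}(\aca)$ to the finitely axiomatized theory $\aca$ preserves finite axiomatizability.

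Before concluding I would verify the hypotheses of Corollary \ref{CorMain} for this choice of $U$ and $T$: we need $\eca\subseteq \aca\subseteq \atr$, which is immediate, and we need $\atr$ to prove that every set is contained in a full countable coded $\omega$-model of $\aca$. This second condition is exactly Proposition \ref{PropACA0Mod}, so the conditions are met and the corollary applies.

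The only real point to double-check is that ${\tt Pred\mbox{-}O\mbox{-}Cons}(\aca)$ is truly one formula; the potential obstacle would be if the definition of $\provor{\lambda}{X}{T}$ implicitly required a schema over formulas, but Definition \ref{definition:OracleRuleWithSecondOrderVariable} supplies a single $\Pi^1_1$ definition using a second-order quantifier $\forall P$ over iterated provability classes, so no schema is hidden. Thus the argument is essentially a one-line unfolding of Corollary \ref{CorMain} combined with the syntactic shape of ${\tt Pred\mbox{-}O\mbox{-}Cons}$, and $\atr$ is finitely axiomatizable.
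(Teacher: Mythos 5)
Your proof is correct and takes essentially the same route as the paper, which derives the corollary immediately from Theorem \ref{TheoFinAx} and Corollary \ref{CorMain} by setting $U=T=\aca$. Your explicit check that ${\tt Pred\mbox{-}O\mbox{-}Cons}(\aca)$ is a single $\slang$-sentence (since $\provor{\lambda}{X}{T}$ is one uniform $\Pi^1_1$ formula and the consistency principle instantiates the reflection schema only at $\bot$) is exactly the implicit point that makes the paper's ``immediate'' deduction legitimate.
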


\proof Immediate from Theorems \ref{TheoFinAx} and Corollary \ref{CorMain} setting $U=T=\aca$.

{\section*{Acknowledgements}
We would like to thank Jeremy Avigad, Enrique Casanovas, Carl Mummert, Michael Rathjen, Henry Towsner and Albert Visser for fruitful discussions and/or comments.}

{

}

\end{document}